\newtheorem{thmintro}{Theorem}
\newtheorem{thm}{Theorem}[section]
\newtheorem{cor}[thm]{Corollary}
\newtheorem{lemma}[thm]{Lemma}
\newtheorem{prop}[thm]{Proposition}
\theoremstyle{remark}
\theoremstyle{definition}
\newtheorem{defn}[thm]{Definition}
\newtheorem{rmk}[thm]{Remark}
\newtheorem{exa}[thm]{Example}
\newtheorem{notation}[thm]{Notation}
\numberwithin{equation}{thm}
\def\beq{\begin{equation}}
\def\eeq{\end{equation}}
\def\crash#1{}
\def\Z{{\mathbb Z}}
\def\Q{{\mathbb Q}}
\def\R{{\mathbb R}}
\def\C{{\mathbb C}}
\def\l{\left}
\def\r{\right}
\def\[[{\l[\l[}
\def\]]{\r]\r]}
\def\p{\prime}
\def\sgq{\sigma_q}
\def\Sgq{\Sigma_q}
\def\ord{{\rm ord}}
\def\cf{\emph{cf. }}
\def\ie{\emph{i.e. }}
\def\ds{\displaystyle}
\def\cB{{\mathcal B}}
\def\cM{{\mathcal M}}
\def\cN{{\mathcal N}}
\def\cL{{\mathcal L}}
\def\cO{{\mathcal O}}
\def\cQ{{\mathcal Q}}
\def\cR{{\mathcal R}}
\def\bC{{\mathbf C}}
\def\bK{{\mathbf K}}
\def\wtilde{\widetilde}
\def\what{\widehat}
\def\ul{\underline}
\def\a{\alpha}
\def\be{\beta}
\def\sg{\sigma}
\def\la{\lambda}
\def\La{\Lambda}
\def\Exp{{\mathrm E}{\mathrm x}{\mathrm p}}
\def\qdiffKa{$q$-$Diff_\bK^a$}
\def\qdiffKna{$q$-$Diff_{\bK_n}^a$}
\def\qdiffKaa{$q$-$Diff_\bK^{aa}$}
\author{Lucia Di Vizio
\thanks{Work partially supported by ANR, contract ANR-06-JCJC-0028}}
\date{{\small Institut de Math\'{e}matiques de Jussieu,
Topologie et g\'{e}om\'{e}trie alg\'{e}briques,}\\
{\small Case 7012, 2, place Jussieu, 75251 Paris Cedex 05, France.}\\
{\small e-mail: {\tt divizio@math.jussieu.fr}.}}
\title{
Local analytic
classification\\ of $q$-difference equations with $|q|=1$}
\begin{document}
\maketitle


\tableofcontents


\section*{Introduction}
\addcontentsline{toc}{section}{Introduction}

For an
algebraic complex semisimple group $G$ and for a fixed
$q\in\C^\ast=\C\smallsetminus\{0\}$, $|q|\neq 1$,
V. Baranovsky and V. Ginzburg prove the following statement:

\begin{thmintro}[{\cite[Thm. 1.2]{BaranovskyGinzburg}}]\label{thmintro:BGorig}
There exists a natural bijection between
the isomorphism classes of holomorphic principal semistable $G$-bundles on the elliptic curve
$\C/q^\Z$ and  the
integral twisted conjugacy classes of the points of
$G$ that are rational over $\C((x))$.
\end{thmintro}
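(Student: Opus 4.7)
\medskip

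I would exploit the uniformization $E_q = \C^\ast/q^\Z$ to translate the geometric problem into a problem about twisted conjugacy. The universal cover $\pi:\C^\ast\to E_q$ has deck transformation group $q^\Z\cong\Z$ acting by $x\mapsto qx$, so holomorphic principal $G$-bundles on $E_q$ are equivalent to $q^\Z$-equivariant holomorphic $G$-bundles on $\C^\ast$. Since $\C^\ast$ is Stein and $G$ is connected semisimple, any such bundle can be trivialized holomorphically (Grauert's theorem), so the equivariant structure is encoded by a single cocycle $A:\C^\ast\to G$, and two choices give isomorphic bundles on $E_q$ if and only if there is a holomorphic $F:\C^\ast\to G$ with $A'(x)=F(qx)^{-1}A(x)F(x)$: this is exactly the twisted conjugation relation. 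So at the analytic level I would have a natural bijection between isomorphism classes of holomorphic $G$-bundles on $E_q$ and twisted conjugacy classes in $G(\cO(\C^\ast))$.

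The next step is to replace $\cO(\C^\ast)$ by $\C((x))$ on both sides. Expanding a cocycle as a Laurent series at $0$ gives a map from analytic to formal twisted conjugacy classes, and I would prove this map is a bijection (on the relevant subsets) using a $q$-difference Birkhoff-type lemma: because $|q|\neq 1$, the operator $\sigma_q$ acts as a strict contraction or dilation on $\cO(\C^\ast)$ and the relevant sheaf cohomology with values in $G$ vanishes, so formal twisted conjugators lift to analytic ones and any integral formal cocycle admits a holomorphic representative in its class.

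Finally, I would match semistability with integrality of the conjugacy class. By the Atiyah-Ramanathan classification, semistable $G$-bundles on an elliptic curve correspond, under the above dictionary, to those $q$-difference modules that are pure of integer slope (equivalently, the Newton polygon of $A$ has only integer vertices): any non-integer rational slope forces a Harder-Narasimhan destabilizing reduction, and conversely an integer-slope structure yields a reduction to a Levi subgroup compatible with semistability. Piecing together the three steps, I would obtain the asserted bijection.

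The main obstacle, in my view, is the analytic-to-formal step. For $G=GL_n$, both injectivity and surjectivity follow from Birkhoff's classical factorization theorem for loop groups applied to $q$-cocycles; for a general semisimple $G$ one needs the principal-bundle analogue (in the spirit of Ramanathan), combined with a careful definition of Newton polygon and slope in the $G$-equivariant setting. Once this Birkhoff-type statement is available for general $G$, identifying ``integral'' with ``semistable'' reduces to Atiyah-Ramanathan read through the dictionary above.
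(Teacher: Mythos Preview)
The paper does not contain a proof of this statement. Theorem~\ref{thmintro:BGorig} is quoted from \cite[Thm.~1.2]{BaranovskyGinzburg} as background and motivation: it concerns the case $|q|\neq 1$, whereas the present paper is devoted to the case $|q|=1$. The paper's own contributions are the subsequent theorems in the introduction (on $\cB_q^{iso}$ and $\cB_q^{iso,f}$), proved in Sections~\ref{sec:factorisation}--\ref{sec:Soibelman}. So there is no ``paper's own proof'' of this statement to compare your proposal against.

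Your sketch is a plausible outline of how one might approach the Baranovsky--Ginzburg result, but one point deserves correction. You identify \emph{integral} conjugacy classes with those having \emph{integer} Newton slopes; the paper's definition (immediately after the statement) is different: a class is integral when it contains a point of $G$ rational over $\C[[x]]$, and for $G=GL_\nu$ this corresponds to \emph{regular singular} systems, i.e.\ Newton polygon with only the zero slope. So the semistability--integrality match you propose would have to go through the criterion ``conjugate into $G(\C[[x]])$'' rather than through integrality of slopes.
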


The \emph{twisted conjugation} is an action of $G(\C((x)))$ on itself defined by
$$
(g(x),a(x))\longmapsto {}^{g(x)}a(x)=g(qx)a(x)g(x)^{-1}\,.
$$
An equivalence class is call \emph{integral} when it contains a point of $G$ rational over $\C[[x]]$.
\par
As the authors themselves point out, this result is better understood in terms
of $q$-difference equations.
If $G=Gl_\nu$, then the integral twisted
conjugacy classes of $G(\C((x)))$
correspond exactly to the isomorphism classes of
formal regular singular $q$-difference systems.
In fact, consider a $q$-difference equation
$$
Y(qx)=B(x)Y(x)\,,
\hskip 10 pt\hbox{with~}B(x)\in Gl_\nu(\C((x)))\,.
$$
Then this system is regular singular if there exists $G(x)\in Gl_\nu(\C((x)))$ such
that $B^\p(x)=G(qx)B(x)G(x)^{-1}\in Gl_\nu(\C[[x]])$. In this case if $Y(x)$ is a solution of
$Y(qx)=B(x)Y(x)$ in some $q$-difference algebra extending $\C((x))$, then $W(x)=G(x)Y(x)$
is solution of the system $W(qx)=B^\p(x)W(x)$.

\par
Y. Soibelman and V. Vologodsky in  \cite{SoibelmanVologodsky}
use an analogous approach, \emph{via} $q$-difference
equations, to understand vector bundles on non commutative elliptic curves.
Their classification,
and hence the classification of analytic $q$-difference systems, with $|q|=1$,
is a step in Y. Manin's \emph{Alterstraum} \cite{Manin},
for understanding real multiplication through non commutative geometry.
\par
In \cite{SoibelmanVologodsky}, the authors identify the category of coherent modules on the
elliptic curve $\C^\ast/q^\Z$, for $q\in\C^\ast$,
not a root of unity, to the category of
$\cO(\C^\ast)\rtimes q^\Z$-modules of finite presentation over the ring $\cO(\C^\ast)$
of holomorphic functions on $\C^\ast$
(\cf \cite[\S2,\S3]{SoibelmanVologodsky}), both in the classic
(\ie $|q|\neq 1$) and in the non commutative (\ie $|q|=1$) case.
For $|q|=1$, they study, under convenient diophantine assumptions,
its Picard group and make a list of
simple objects.
In the second part of the paper,
they focus on the classification of formal
analogous objects defined over $\C((x))$, namely of
$\C((x))$-finite vector spaces
$M$ equipped with a semilinear invertible operator $\Sgq$, such that
$\Sgq(f(x)m)=f(qx)\Sgq(m)$, for any $f(x)\in\C((x))$ and any $m\in M$.
\par
In this paper, we establish, under convenient diophantine assumptions, an analytic
classification of $q$-difference modules over the field $\C(\{x\})$ of germs of
meromorphic functions at zero, proving some analytic analogs of the results in
\cite{SoibelmanVologodsky} and in \cite{BaranovskyGinzburg}.

$$\ast\ast\ast$$

We fix $q\in\C$, $|q|=1$, not a root of unity. Let $\cB_q$
(resp. $\what\cB_q$) be the category of $q$-difference module over $\bK:=\C(\{x\})$ (resp.
$\what\bK:\C((x))$).
Let us consider a $q$-difference module over $\bK$ and fix a basis $\ul e$
such that $\Sgq\ul e=\ul e B(x)$, with $B(x)\in Gl_\nu(\bK)$.
If it is a regular singular, or equivalently if its Newton polygon has only
the zero slope (\cf \S\ref{subsec:NP}), then
we can choose a basis $\ul f$ of $M\otimes_\bK \C((x))$ such that
$\Sgq\ul f=\ul fB^\p$ and $B^\p$ is a constant matrix in $Gl_\nu(\C)$.
When $|q|\neq 1$ we do not need to extends the scalars to $\C((x))$ and we can
find such a basis $\ul f$ over $\bK$.
When $|q|=1$ this is not possible in general because
of some small divisors appearing in the construction of the basis change.
\par
The dichotomy between the ``$|q|\neq 1$'' and the ``$|q|=1$''case becomes even more evident
when the Newton polygons has more than one slope.
In fact, let $(M,\Sgq)$ be an object of $\cB_q$, with a Newton polygon having
slopes $\mu_1<\cdots<\mu_k$, such that the projection of $\mu_i\in\Q$ on the $x$-axis
has length $r_i\in\Z_{>0}$, and let $(\what M,\what \Sgq)$ be the formal
object in $\what\cB_q$ obtained by scalar extension to $\what\bK$.
If $|q|=1$, the analytic isomorphism classes in $\cB_q$ corresponding to the formal isomorphism
class of $(\what M,\what \Sgq)$ in $\what\cB_q$
form a complex affine variety of dimension (\cf \cite{RSZcrasVariete},
\cite{SauloyNotesVariete},
\cite{vdPutReversatToulouse})
$$
\ds\sum_{1\leq i<j\leq k}r_ir_j(\mu_j-\mu_i)\,.
$$
When $|q|=1$ it may happen that the formal and analytic isomorphism classes correspond
one-to-one or that the situation gets much more complicated than the one described
above for $|q|\neq 1$.
\par
The object of this paper is the characterization of
the largest full subcategory $\cB_q^{iso}$ of $\cB_q$ such that
the extension of scalars ``$-\otimes_\bK\C((x))$'' induces an equivalence of categories
of $\cB_q^{iso}$ onto its image in $\what\cB_q$
(\ie that the formal and analytic isomorphism classes coincide).
\par
The objects of $\cB_q^{iso}$ are $q$-difference
modules over $\bK$
satisfying a diophantine condition (\cf \S\ref{subsec:admissibleoperator} and
\S\ref{subsec:simpleobjects} below). They admit a decomposition
associated to their Newton polygon, namely they are \emph{direct sum}
of $q$-difference modules,
whose Newton polygon has one single slope.
The indecomposable objects, \ie those objects that cannot be written as direct sum of
submodules, are obtained by iterated non trivial extension of a simple objet by itself.
The simple objects are all obtained by scalar restriction to $\bK$
from rank 1 $q^{1/n}$-difference objects over $\bK(t)$, $x=t^n$,
associated to equations of the form $y(q^{1/n}t)=\frac{\la}{t^\mu}y(t)$, with
$\la\in\C^\ast$ and $\mu\in\Z$, with $(\mu,n)=1$.
\par
If we call $\cB_q^{iso,f}$ the subcategory of $\cB_q^{iso}$ of the objects
whose Newton polygon has only one slope equal to
zero\footnote{The notation $\what\cB_q^{iso,f}$ reminds that this is a
category of \emph{fuchsian} $q$-difference modules.}, then we have:

\begin{thmintro}
The category $\cB_q^{iso}$ is equivalent to the category of $\Q$-graded objects
of $\cB_q^{iso,f}$, \ie each object of $\cB_q^{iso}$ is a direct sum indexed on $\Q$
of objects of $\cB_q^{iso,f}$ and the morphisms of $q$-difference modules respect the grading.
\end{thmintro}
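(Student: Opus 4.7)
The plan is to deduce the equivalence from the two structural properties of $\cB_q^{iso}$ already announced in the introduction: its objects decompose along their Newton polygon slopes, and its simple objects are obtained, after a ramified extension, from rank-one modules associated to equations of the form $y(q^{1/n}t)=\la t^{-\mu}y(t)$.

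First, for any $M\in\cB_q^{iso}$, I would apply the slope decomposition to write $M=\bigoplus_{\mu\in\Q}M^{(\mu)}$, with $M^{(\mu)}$ of pure slope $\mu$. This decomposition is functorial: since morphisms of $q$-difference modules preserve slopes, the space of morphisms from $M^{(\mu)}$ to $(M')^{(\mu')}$ vanishes whenever $\mu\neq\mu'$, so any morphism $M\to M'$ splits uniquely as $\bigoplus_\mu(M^{(\mu)}\to (M')^{(\mu)})$. This already supplies the $\Q$-grading on objects and the compatibility with morphisms.

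Second, for each $\mu\in\Q$, I would produce an equivalence between the full subcategory of pure-slope-$\mu$ objects of $\cB_q^{iso}$ and $\cB_q^{iso,f}$. Writing $\mu=p/n$ in lowest terms, the rank-one $q$-difference module $E_\mu$ over $\bK(t)$ with $t^n=x$ defined by $y(q^{1/n}t)=t^{-p}y(t)$ descends by scalar restriction to an object of $\cB_q^{iso}$ of slope $\mu$. The tensor functor $T_\mu:M\mapsto M\otimes E_\mu^\vee$ shifts the Newton polygon downward by $\mu$ and hence sends pure-slope-$\mu$ objects to fuchsian ones, with $N\mapsto N\otimes E_\mu$ as a quasi-inverse. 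Combined with the slope decomposition of the previous step, this yields the desired functor $M\mapsto(T_\mu M^{(\mu)})_{\mu\in\Q}$ from $\cB_q^{iso}$ to the $\Q$-graded category of $\cB_q^{iso,f}$, and one must verify that it is an equivalence at every slope.

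The main obstacle will be showing that $T_\mu$ restricts to an equivalence \emph{inside} the diophantine subcategories, i.e.\ that $M\otimes E_\mu^{\vee}$ still lies in $\cB_q^{iso}$ whenever $M$ does, and likewise for the inverse twist. This amounts to checking that the diophantine type defining $\cB_q^{iso}$ is essentially invariant under twisting by the rank-one modules $E_\mu$, so that the small-divisor bounds needed to split the relevant extensions are not disturbed. A secondary technical point is the compatibility with the ramified descent $\bK(t)\to\bK$ for $n>1$: one must ensure that the slope decomposition and the twist commute with scalar restriction so as not to create spurious obstructions.
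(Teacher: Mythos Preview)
Your first step is exactly the paper's entire proof: the paper simply declares that the degree-$\mu$ component of an object of $\cB_q^{iso}$ is its maximal pure submodule of slope $\mu$ (which exists by the structure theorem, Theorem~\ref{thm:M-adams}), and observes that morphisms respect the grading because there are no nonzero morphisms between pure modules of different slopes. Nothing further is written; the paper evidently regards the identification of each pure-slope-$\mu$ piece with an object of $\cB_q^{iso,f}$ as already clear from the slope-shifting manipulation of Remark~\ref{rmk:ManipSlopes}.

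Your second step therefore goes beyond the paper, and two comments are in order. First, the ``main obstacle'' you anticipate is not one: by Definition~\ref{defn:admissible}, (almost) admissibility of a slope depends only on the ratios $\la_i\la_j^{-1}$ of its exponents, and a rank-one twist leaves all such ratios unchanged, so $T_\mu$ trivially preserves the diophantine class. Second, your construction of $E_\mu$ for non-integer $\mu=p/n$ does not yield an equivalence as written: scalar restriction $Res_n$ turns a rank-one module over $\bK_n$ into a rank-$n$ module over $\bK$, so $M\mapsto M\otimes_\bK E_\mu^\vee$ multiplies ranks by $n$ and $N\mapsto N\otimes_\bK E_\mu$ cannot be a quasi-inverse. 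The clean repair is to perform the rank-one twist over $\bK_n$, where the slope is integral, and treat the passage $\bK\leftrightarrow\bK_n$ separately---in effect, the operator-level shift of Remark~\ref{rmk:ManipSlopes} combined with the ramification arguments around Corollary~\ref{cor:adamsKn} and Lemma~\ref{lemma:fattorizzazioneirriducile}.
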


Notice that Soibelman and Vologodsky in \cite{SoibelmanVologodsky}
prove exactly the same statement for the category of formal $q$-difference module $\what\cB_q$.
Moreover we have:

\begin{thmintro}
The category $\cB_q^{iso,f}$ is equivalent to the category of finite dimensional
$\C^\ast/q^\Z$-graded complex vector spaces $V$ endowed with nilpotent operators
which preserves the grading, that moreover have the following property:
\begin{quote}
Let $\la_1,\dots,\la_n\in\C^\ast$ be a set of representatives of the classes
of $\C^\ast/q^\Z$ corresponding to non zero homogeneous components of $V$.
The series $\Phi_{(q;\ul\La)}(x)$ (defined in Definition \ref{defn:admissible})
is convergent.
\end{quote}
\end{thmintro}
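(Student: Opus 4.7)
The plan is to split the equivalence into two parts: the classical algebraic classification of constant-coefficient $q$-difference modules, and the analytic trivialisation theorem that realises every object of $\cB_q^{iso,f}$ in constant form. The first part mirrors what Soibelman and Vologodsky \cite{SoibelmanVologodsky} prove on the formal side; the second part is where the $|q|=1$ small-divisor analysis enters.

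For the algebraic part, any constant-coefficient module $\Sgq\ul f=\ul f A$, $A\in Gl_\nu(\C)$, admits a canonical Jordan decomposition $A=DU$ into commuting semisimple and unipotent factors. The generalised eigenspaces of $D$ give a $\C^\ast$-grading on the underlying vector space $V$; modding out by the gauge action $\ul f\mapsto x^k\ul f$, which rescales $A$ by $q^k$, produces a $\C^\ast/q^\Z$-grading. The nilpotent operator $N:=U-I$ preserves the grading because $D$ and $U$ commute. Conversely, any $(V,\{V_{[\la]}\},N)$ together with a choice of representatives $\ul\La=(\la_1,\dots,\la_n)$ yields the constant matrix $A=\mathrm{diag}(\la_i I_{d_i})(I+N)$ and hence a $q$-difference module over $\bK$; this is quasi-inverse to the extraction above.

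For the analytic part, I would show that every $(M,\Sgq)\in\cB_q^{iso,f}$ admits a basis over $\bK=\C\{x\}$ in which the matrix of $\Sgq$ is constant. Being fuchsian, $(M,\Sgq)$ has a formal constant model $A\in Gl_\nu(\C)$ and a formal gauge transformation $\what F\in Gl_\nu(\C[[x]])$ satisfying $\what F(qx)A=B(x)\what F(x)$. Expanding in powers of $x$ gives a recursion on the coefficients of $\what F$ whose small divisors are controlled by the differences $q^k\la_i-\la_j$, with $\la_i,\la_j$ ranging over the eigenvalues of $A$. The series $\Phi_{(q;\ul\La)}$ of Definition~\ref{defn:admissible} captures precisely the worst-case contribution of these small divisors, so its convergence is the exact analytic criterion for $\what F$ to lie in $Gl_\nu(\C\{x\})$. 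In the opposite direction, the module built from an admissible triple $(V,\{V_{[\la_i]}\},N)$ lies in $\cB_q^{iso,f}$ by construction, the admissibility hypothesis being placed on $\ul\La$ itself.

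Full faithfulness is routine once both objects have been brought to constant form: a morphism between two constant models commutes with the matrices, hence with their semisimple and unipotent parts, hence respects the $\C^\ast/q^\Z$-grading and intertwines the nilpotent operators; conversely, any such $\C$-linear map extends uniquely to a $\bK$-semilinear morphism of modules. The only non-formal step, and the main obstacle, is the small-divisor convergence estimate of the previous paragraph, which distinguishes $|q|=1$ from the classical $|q|\neq 1$ situation and should be the heart of the analysis of admissible operators carried out earlier in the paper.
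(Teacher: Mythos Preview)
Your proposal is correct and follows the same overall architecture as the paper: reduce analytically to a constant-coefficient model, then read off the $\C^\ast/q^\Z$-grading and the nilpotent operator from the Jordan decomposition. The paper's own proof of Theorem~\ref{thm:C/qZgraded} is much terser because the analytic reduction has already been packaged as Corollary~\ref{cor:vectorspace} (via the structure Theorem~\ref{thm:M-adams}), so at this point the paper simply invokes that corollary and writes $B=D+N$.

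The one genuine difference is in how the constant-coefficient form is obtained. You sketch the direct matrix route: write down the gauge recursion $A F_n-q^nF_nA=(\hbox{lower order})$ and bound its solution by $\Phi_{(q;\ul\La)}$. The paper instead goes through the \emph{scalar operator} factorization of \S\ref{sec:factorisation}: a cyclic vector turns $\cM$ into $\bK[\sgq]/(\cL)$, Lemma~\ref{lemma:adams} peels off one right factor $(x^\mu\sgq-\la)h(x)$ at a time, and the convergence of each $h(x)$ is controlled by exactly the same series $\phi_{(q;\ul\La)}$. Both routes feed on the identical small-divisor input from \S\ref{sec:smalldiv}; the operator approach has the advantage that it delivers the filtration by slopes and the decomposition into indecomposables in one stroke (hence also Theorems~\ref{thm:M-adams} and~\ref{thm:Qgraded}), whereas your gauge argument is more self-contained for the slope-zero case and closer to what one finds in the system-theoretic literature for $|q|\neq 1$.
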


Combined with the result proved in \cite{SoibelmanVologodsky} that the objects of
$\what\cB_q$ of slope zero form a category which is equivalent to the category of
$\C/q^\Z$-graded complex vector spaces equipped with a nilpotent operator respecting the grading,
this gives a characterization of the image of $\cB_q^{iso,f}$ in $\what\cB_q$ \emph{via} the
scalar extension.

To prove the classification described above, one only need to study the small divisor problem
(\cf \S\ref{sec:smalldiv}). Once this is done, the techniques used are similar to the techniques used in
$q$-difference equations theory for $|q|\neq 1$ (\cf the papers of
F. Marotte and Ch. Zhang \cite{MarotteZhang}, J. Sauloy \cite{sauloyfiltration},
M. van der Put and M. Reversat \cite{vdPutReversatToulouse}, that have their roots in
the work of G. D. Birkhoff and P.E. Guether \cite{BirkhoffGuenther} and C.R. Adams
\cite{Adams}). The statements we have cited in this introduction are actually consequences
of analytic factorizations properties of $q$-difference linear operators (\cf \S\ref{sec:factorisation}
below). Finally, we point out a work in progress by
C. De Concini, D. Hernandez, N. Reshetikhin applying the analytic classification
of $q$-difference modules with $|q|\neq 1$ to the study of quantum affine algebras.
The study of $q$-difference equations with $|q|=1$ should help to complete the theory.
\par
A last remark: the greatest part of the statements proved in this paper are true
also in the ultrametric case, therefore we will mainly work over an algebraically
closed normed field $\bC,|~|$.

\medskip
{\bfseries Acknowledgement.}
I would like to thank the Centre de Recerca Matem\`{a}tica
of the Universitat Auton\`{o}ma de Barcelona for the hospitality, D. Sauzin  and J.-P. Marco for
answering to all my questions on small divisor problem, and D.
Bertrand, Y. Manin and
M. Marcolli for the interest they have manifested for this work.
I also would like to thanks D. Hernandez: it is mainly because of his questions
that I started working on the present paper.


\section{A small divisor problem}
\label{sec:smalldiv}

Let:
\begin{trivlist}
\item $\bullet$
$q=\exp(2i\pi\omega)$, with
$\omega\in(0,1)\smallsetminus\Q$;

\item $\bullet$
$\la=\exp(2i\pi\a)$, with $\a\in(0,1]$ and $\la\not\in q^{\Z_{\leq 0}}$.
\end{trivlist}
We want to study the convergence of the
$q$-hypergeometric series
\beq\label{eq:mainseries}
\phi_{(q;\la)}(x)=\sum_{n\geq 0}\frac{x^n}{(\la;q)_n}\in\C[[x]]\,,
\eeq
where the \emph{$q$-Pochhammer symbols} appearing at
the denominator of the coefficients of $\phi_{(q;\la)}(x)$ are defined by:
$$
\l\{\begin{array}{ll}
(\la;q)_0=1\,,\\
(\la;q)_n=(1-\la)(1-q\la)\cdots(1-q^{n-1}\la)\,,& \hbox{for $n\geq 1$}\,.
\end{array}\r.
$$
This is a well-known problem in complex dynamics.
Nevertheless we give here some
proofs that already contains the problems and the ideas used in the sequel:

\begin{prop}\label{prop:smalldivisors}
Suppose that $\la\not\in q^\Z$.
The series $\phi_{(q;\la)}(x)$ converges if and only if
both the series $\sum_{n\geq 0}\frac{x^n}{(q;q)_n}$ and
the series $\sum_{n\geq 0}\frac{x^n}{1-q^n\la}$ converge.
Under these assumptions the radius of convergence of $\phi_{(q;\ul\la)}(x)$ is at least:
$$
R(\omega)\inf\l(1,r(\a)\r)\,,
$$
where $R(\omega)$ (resp. $r(\a)$) is the radius of convergence of
$\sum_{n\geq 0}\frac{x^n}{(q;q)_n}$
(resp. $\sum_{n\geq 0}\frac{x^n}{1-q^n\la}$).
\end{prop}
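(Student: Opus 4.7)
The plan is to control $|(\la;q)_n|^{-1}$ by isolating, at each order $n$, the single worst small divisor. Set $\de_n:=\min_{0\leq k<n}|1-q^k\la|$, achieved at some index $k_n$, and write
$$
\frac{1}{|(\la;q)_n|}=\frac{1}{\de_n}\cdot\prod_{0\leq k<n,\,k\neq k_n}\frac{1}{|1-q^k\la|}.
$$
The first factor is controlled by the hypothesis on $\sum x^n/(1-q^n\la)$: since $\de_n^{-1}\leq\sum_{k<n}|1-q^k\la|^{-1}$, one gets $\de_n^{-1}\leq C\rho^{-n}$ for any $\rho<r(\a)$. For the residual product I set $\mu:=q^{k_n}\la$ (so $|1-\mu|=\de_n$) and, substituting $j=k-k_n$, aim to compare $\prod_{j\neq 0}|1-q^j\mu|$ with $|(q;q)_{n-1}|$.

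The heart of the argument is the identity $1-q^j\mu=(1-q^j)+q^j(1-\mu)$. When $|1-q^j|\geq 2\de_n$ the triangle inequality yields $|1-q^j\mu|\geq|1-q^j|(1-\de_n/|1-q^j|)$, and when $|1-q^j|<2\de_n$ the minimality of $k_n$ forces $|1-q^j\mu|\geq\de_n\geq|1-q^j|/2$. Taking logarithms and products, and using $\prod_{j\in[-k_n,n-1-k_n]\setminus\{0\}}|1-q^j|=|(q;q)_{k_n}|\cdot|(q;q)_{n-1-k_n}|\geq c\sigma^{n-1}$ for any $\sigma<R(\omega)$, the task reduces to showing that the total logarithmic loss is $o(n)$. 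This is where the equidistribution of the orbit $\{j\omega\}$ and the convergence hypothesis on $\sum x^n/(q;q)_n$ enter: they bound the bad set $\{j:|1-q^j|<2\de_n\}$ by $O(1+n\de_n)$ and keep the sum $\sum_{|1-q^j|\geq 2\de_n}\de_n/|1-q^j|$ sublinear. One then concludes $\limsup|(\la;q)_n|^{-1/n}\leq 1/(\sigma\rho)$, giving radius of convergence at least $\sigma\rho$; letting $\sigma\to R(\omega)$ and $\rho\to r(\a)$ yields the stated lower bound (note that $r(\a)\leq 1$ always, since $|1-q^n\la|\leq 2$, so $\inf(1,r(\a))=r(\a)$).

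For necessity, the crude bound $|1-q^k\la|\leq 2$ gives $|(\la;q)_n|\leq 2^n$, hence $|1-q^n\la|^{-1}=|(\la;q)_n|/|(\la;q)_{n+1}|\leq 2^n|(\la;q)_{n+1}|^{-1}$, whose $n$-th root is bounded by $2/R(\phi_{(q;\la)})$, showing $r(\a)>0$ whenever $\phi_{(q;\la)}$ converges. The convergence of $\sum x^n/(q;q)_n$ is handled in parallel, via the observation that small divisors of $\omega$ propagate to small values of $|1-q^k\la|$ along the orbit $\{k\omega+\a\}$. The principal obstacle of the proof is the sublinear-loss estimate above: the naive triangle inequality yields only radius $R(\omega)r(\a)/2$, and replacing the parasitic $2^{n-1}$ factor by $e^{o(n)}$ requires careful use of the equidistribution of $\{j\omega\}$, which is the genuine small-divisor content of the proposition.
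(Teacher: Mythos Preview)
Your approach is genuinely different from the paper's, and its central step is not actually carried out. The paper proves the proposition via a single formal identity (a $q$-analogue of Kummer's transformation):
\[
\phi_{(q;q\la)}(x)=(1-\la)\Bigl(\sum_{n\geq 0}\frac{x^n}{(q;q)_n}\Bigr)\Bigl(\sum_{n\geq 0}q^{n(n+1)/2}\frac{(-x)^n}{(q;q)_n}\,\frac{1}{1-q^n\la}\Bigr),
\]
established by checking that both sides satisfy the same second-order $q$-difference equation $(\sgq-1)\circ[\la\sgq-((q-1)t+1)]$ with the same constant term. From this the sufficiency and the radius bound are immediate: the first factor has radius $R(\omega)$, the second has $n$-th coefficient of modulus $|(q;q)_n|^{-1}|1-q^n\la|^{-1}$ and hence radius at least $R(\omega)r(\a)$, so the product converges for $|x|<R(\omega)\inf(1,r(\a))$. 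No Diophantine estimates whatsoever are needed.

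Your direct estimate of $|(\la;q)_n|$, by contrast, makes the small-divisor difficulty explicit rather than dissolving it, and the two claims you attribute to ``equidistribution'' are exactly the hard part and remain unproved. The bad-set bound $O(1+n\de_n)$ is not a consequence of equidistribution alone: if $\omega$ has a large partial quotient $a_{\ell+1}$ and $\de_n$ happens to lie between $\|q_\ell\omega\|_\Z$ and $\|q_{\ell-1}\omega\|_\Z$, the bad set contains the $\sim n/q_\ell$ multiples of $q_\ell$ in the range, whereas $n\de_n$ can be of order $n/q_{\ell+1}$, arbitrarily smaller. The sublinearity of $\sum_{|1-q^j|\geq 2\de_n}\de_n/|1-q^j|$ likewise needs a quantitative discrepancy input tied to the Brjuno-type hypothesis on $\omega$; the dyadic heuristic suggests $O(n\de_n\log(1/\de_n))$, but turning this into a proof is precisely what you have skipped. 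You yourself note that without these estimates the argument only reaches radius $R(\omega)r(\a)/2$; the paper's identity bypasses the obstruction entirely. Finally, your necessity sketch for the convergence of $\sum x^n/(q;q)_n$ (``small divisors of $\omega$ propagate to small values of $|1-q^k\la|$'') is only a slogan and would also require real work to justify.
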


\begin{rmk}\label{rmk:q-exp}
If $\la\in q^{\Z_{>0}}$, the series $\phi_{(q;\la)}(x)$ is defined and its radius of convergence
is equal to $R(\omega)$. Estimates and lower bounds for $R(\omega)$ and $r(\a)$ are discussed
in the following subsection.
\end{rmk}

The proof of the Proposition \ref{prop:smalldivisors} obviously
follows from the lemma below, which is a $q$-analogue of a special case of the
Kummer transformation formula:
$$
\sum_{n\geq 0}\frac{x^s}{(1-\a)(2-\a)\cdots(n-\a)}
=\a\exp(x)\sum_{n\geq 0}\frac{(-x)^n}{n!}\frac1{\a-n}\,,
$$
used in some estimates for $p$-adic Liouville numbers
\cite[Ch.VI, Lemma 1.1]{DGS}.

\begin{lemma}[{\cite[Lemma 20.1]{DVdwork}}]
We have the following formal identity:
$$
\phi_{(q;q\la)}\l(x\r)
=\ds\sum_{n\geq 0}\frac{x^n}{(1-q\la)\cdots(1-q^n\la)}\\ \\
=\ds(1-\la)\l(\sum_{n\geq 0}\frac{x^n}{(q;q)_n}\r)
\l(\sum_{n\geq 0}q^{\frac{n(n+1)}{2}}\frac{(-x)^n}{(q;q)_n}\frac1{1-q^n\la}\r)\ .
$$
\end{lemma}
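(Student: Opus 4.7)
The plan is to prove this formal identity by comparing the coefficients of $x^N$ for each $N\geq 0$. The left-hand side obviously contributes $1/(q\la;q)_N$. The Cauchy product of the two factors on the right produces
\[
(1-\la)\sum_{m=0}^{N}\frac{(-1)^m\, q^{m(m+1)/2}}{(q;q)_m\,(q;q)_{N-m}\,(1-q^m\la)},
\]
so, using $(\la;q)_{N+1}=(1-\la)(q\la;q)_N$, the desired equality reduces to the rational identity in $\la$
\[
\frac{1}{(\la;q)_{N+1}}=\sum_{m=0}^{N}\frac{(-1)^m\, q^{m(m+1)/2}}{(q;q)_m\,(q;q)_{N-m}}\cdot\frac{1}{1-q^m\la}.
\]

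To establish this, I would view the left-hand side as a rational function of $\la$ with simple poles at $\la=q^{-m}$ for $0\leq m\leq N$ and vanishing at $\infty$, and write down its partial fraction decomposition. The residue at $\la=q^{-m}$ is the reciprocal of $\prod_{j\neq m}(1-q^{j-m})$; splitting this product into the contributions from $j>m$ and $j<m$, and using the identity $1-q^{-k}=-q^{-k}(1-q^k)$, the product equals $(-1)^m q^{-m(m+1)/2}(q;q)_m(q;q)_{N-m}$. Hence the residue at $\la=q^{-m}$ is exactly the coefficient appearing in the sum above, which finishes the argument.

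The only real obstacle is bookkeeping: keeping track of the signs and of the exponent $m(m+1)/2$ arising from $\sum_{k=1}^{m}k$. As a sanity check one could instead verify that both sides of the original identity satisfy the same inhomogeneous first-order $q$-difference equation in $x$, namely $(1-x)\,\psi(x)-q\la\,\psi(qx)=1-q\la$ (this comes directly from the recursion $(1-q^{n+1}\la)\,a_{n+1}=a_n$ satisfied by the coefficients of $\phi_{(q;q\la)}$), and that both sides take the value $1$ at $x=0$; uniqueness of the formal power series solution with prescribed constant term then forces equality. Since the statement is purely formal, no convergence issues intervene at this stage, which is consistent with its later use as the algebraic input to the small divisor analysis of Proposition~\ref{prop:smalldivisors}.
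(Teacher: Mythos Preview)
Your main argument via partial fractions is correct and complete. The residue computation is right: $\prod_{j\neq m}(1-q^{j-m})=(-1)^m q^{-m(m+1)/2}(q;q)_m(q;q)_{N-m}$, and since $1/(\lambda;q)_{N+1}$ vanishes at infinity there is no polynomial part, so the partial fraction expansion is exactly the sum you wrote.

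This is a genuinely different route from the paper's. The paper does not compare coefficients; instead it shows that both sides (after the substitution $x=(1-q)t$) are annihilated by the second-order operator $\mathcal{L}=(\sigma_q-1)\circ[\lambda\sigma_q-((q-1)t+1)]$, and then invokes uniqueness of the power-series solution with constant term $1$ (the characteristic roots being $1$ and $\lambda^{-1}\notin q^{\Z}$). Your approach is more elementary and self-contained; the paper's approach has the advantage of previewing exactly the operator-theoretic machinery (factorization, characteristic polynomial of the zero slope) developed in \S\ref{sec:factorisation}, so the lemma doubles as a worked example of that method.

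One small slip in your sanity-check paragraph: the first-order equation satisfied by $\psi=\phi_{(q;q\lambda)}$ is $(1-x)\psi(x)-\lambda\,\psi(qx)=1-\lambda$, not $(1-x)\psi(x)-q\lambda\,\psi(qx)=1-q\lambda$. (From $(1-q^{n}\lambda)a_{n}=a_{n-1}$ one gets $\psi(x)-\lambda\psi(qx)=x\psi(x)+(1-\lambda)$.) Verifying that the product on the right-hand side also satisfies this inhomogeneous first-order equation is not entirely immediate, which is presumably why the paper passes to the homogeneous second-order equation, where the factor $e_q$ can be peeled off cleanly.
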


\begin{proof}
We set $x=(1-q)t$, $[n]_q=1+q+\dots+q^{n-1}$ and $[0]_q=1$, $[n]_q^!=[n]_q[n-1]_q^!$.
Then we have to show the identity:
$$
\phi_{(q;q\la)}\l((1-q)t\r)
=\ds(1-\la)\l(\sum_{n\geq 0}\frac{t^n}{[n]_q^!}\r)
\l(\sum_{n\geq 0}q^{\frac{n(n+1)}{2}}\frac{(-t)^n}{[n]_q^!}\frac 1{1-q^n\la}\r)\,.
$$
Consider the $q$-difference operator $\sgq:t\longmapsto qt$.
One verifies directly that the series $\Phi(t):=\phi_{(q;q\la)}\l((1-q)t\r)$
is solution of the $q$-difference operator:
$$
\cL=\big[\sgq-1\big]\circ\big[\la\sgq-\l((q-1)t+1\r)\big]
=\la\sgq^2-\l((q-1)qt+1+\la\r)\sgq+(q-1)qt+1\,,
$$
in fact:
$$
\begin{array}{rcl}
{\mathcal L}\Phi(t)
&=&\big[\sgq-1\big]\circ\big[\la\sgq-\l((q-1)t+1\r)\big]\Phi(t)\\ \\
&=&\big[\sgq-1\big](\la-1)=0\,.
\end{array}
$$
Since the roots of the characteristic equation\footnote{\ie
the equation whose coefficients are the constant terms of the coefficients of the
$q$-difference operator. For a complete description of its construction and properties
\cf \S\ref{sec:factorisation}.}
$\la T^2-(\la+1)T+1=0$ of $\cL$
are exactly $\la^{-1}\not\in q^\Z$ and $1$, any solution of ${\mathcal L}y(t)=0$ of the
form $1+\sum_{n\geq 1}a_nt^n\in\C\[[t\]]$ must coincide with $\Phi(t)$.
Therefore, to finish the proof of the lemma, it is enough to verify that
$$
\Psi(t)=(1-\la)\l(\sum_{n\geq 0}\frac{t^n}{[n]_q^!}\r)
\l(\sum_{n\geq 0}q^{\frac{n(n+1)}{2}}\frac{(-t)^n}{[n]_q^!}\frac1{1-q^n\la}\r)
$$
is a solution of ${\mathcal L}y(t)=0$ and that $\Psi(0)=1$.
\par
Let $e_q(t)=\sum_{n\geq 0}\frac{t^n}{[n]_q^!}$. Then $e_q(t)$ satisfies the $q$-difference equation
$$
e_q(qt)=\l((q-1)t+1\r)e_q(t)\,,
$$
hence
$$
\begin{array}{rcl}{\mathcal L}\circ e_q(t)
&=&\big[\sgq-1\big]\circ e_q(qt)\circ \big[\la\sgq-1\big]\\ \\
&=&e_q(t)\l((q-1)t+1\r)\big[\l((q-1)qt+1\r)\sgq-1\big]\circ \big[\la\sgq-1\big]\\ \\
&=&(\ast)\big[\l((q-1)qt+1\r)\sgq-1\big]\circ \big[\la\sgq-1\big]\,,
\end{array}
$$
where we have denoted with $(\ast)$ a coefficient in $\C(t)$, not depending on $\sgq$.
\par
Consider the series $E_q(t)=\sum_{n\geq 0}q^{\frac{n(n+1)}{2}}\frac{t^n}{[n]_q^!}$, which satisfies
$$
\l(1-(q-1)t\r)E_q(qt)=E_q(t)\,,
$$
and the series
$$
g_\la(t)=\sum_{n\geq 0}q^{\frac{n(n+1)}{2}}\frac{(-t)^n}{[n]_q^!}\frac1{1-q^n\la}\,.
$$
Then
$$
\begin{array}{rcl}{\mathcal L}\circ e_q(t)g_\la(t)
&=&(\ast)\big[\l((q-1)qt+1\r)\sgq-1\big]\circ \big[\la\sgq-1\big]g_\la(t)\\ \\
&=&(\ast)\big[\l((q-1)qt+1\r)\sgq-1\big] E_q(-qt)\\ \\
&=&(\ast)\big[\l((q-1)qt+1\r)E_q(-q^2t)-E_q(-qt)\big]\\ \\
&=&0\,.
\end{array}
$$
It is enough to observe that $e_q(0)g_\la(0)=\frac1{1-\la}$
to conclude that the series $\Psi(t)=(1-\la)e_q(t)g_\la(t)$ coincides with $\Phi(t)$.
\end{proof}

\begin{rmk}\label{rmk:casopadico}
Let $(C,|~|)$ be a field equipped with an ultrametric norm and let
$q\in C$, with $|q|=1$ and $q$ not a root of unity. Then the formal
equivalence in Lemma \ref{lemma:qn-la} is still true. The series
$\sum_{n\geq 0}\frac{x^n}{(q;q)_n}$ is convergent for any $q\in C$
such that $|q|=1$ (\cf \cite[\S2]{DVAndre}). On the other side the
series $\sum_{n\geq 0}\frac{x^n}{q^n-\la}$ is not always convergent.
If $\l|\frac{\la-1}{q-1}\r|<1$ then its radius of convergence
coincides with the radius of convergence of the series $\sum_{n\geq
0}\frac{x^n}{n-\a}$, where $\a=\frac{\log\la}{\log q}$ (\cf
\cite[\S19]{DVdwork}, \cite[Ch. VI]{DGS}), otherwise it converges
for $|x|<1$.
\end{rmk}

\subsection{Some remarks on Proposition \ref{prop:smalldivisors}}

Let us make some comments on
the convergence of the series $\sum_{n\geq 0}\frac{x^n}{(q;q)_n}$ and
$\sum_{n\geq 0}\frac{x^n}{1-q^n\la}$.
A first contribution to the study
the convergence of the series $\sum_{n\geq 0}\frac{x^n}{(q;q)_n}$ can be found in
\cite{HW}. The subject has been studied in detail in \cite{LubinskyCandian}.

\begin{defn} (\cf for instance \cite[\S4.4]{MarmiIntSmallDiv})
Let $\l\{\frac{p_n}{q_n}\r\}_{n\geq 0}$
be the convergents of $\omega$, occurring in its continued fraction expansion.
Then the \emph{Brjuno function $\cB$} of $\omega$ is defined by
$$
\cB(\omega)=\sum_{n\geq 0}\frac{\log q_{n+1}}{q_n}
$$
and $\omega$ is a \emph{Brjuno number} if $\cB(\omega)<\infty$.
\end{defn}

Now we are ready to recall the well-known theorem:

\begin{thm}[Yoccoz lower bound, {\cf \cite{Yoccoz}, \cite[Thm. 2.1]{CarlettiMarmi},
\cite[Thm. 5.1]{MarmiIntSmallDiv}}]
If
$\omega$ is a Brjuno number then
the series $\sum_{n\geq 0}\frac{x^n}{(q;q)_n}$ converges.
\par
Moreover its radius of convergence is bounded from below by
$e^{-B(\omega)-C_0}$, where $C_0>0$ is an universal constant (\ie independent of $\omega$).
\end{thm}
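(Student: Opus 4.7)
By Cauchy--Hadamard the radius of convergence of $\sum x^n/(q;q)_n$ equals $\liminf_n|(q;q)_n|^{1/n}$, so it is enough to bound $-\log|(q;q)_n|$ from above by $n(\cB(\omega)+C_0)$. Writing $|1-q^k|=2|\sin(\pi k\omega)|\geq 4\|k\omega\|$, where $\|t\|$ denotes the distance from $t\in\R$ to the nearest integer, reduces the theorem to the uniform Birkhoff-type estimate
\[
S_N := \sum_{k=1}^N -\log\|k\omega\| \;\leq\; N\,\cB(\omega) + C\,N, \qquad N\geq 1,
\]
with a universal constant $C$; the error between $\log(4\|t\|)$ and $\log(2|\sin\pi t|)$ is bounded, hence absorbed in $C_0$.

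The second and arithmetic step is to estimate $S_N$ using the continued fraction expansion $(p_j/q_j)$ of $\omega$. The inputs are the best-approximation bounds $\|q_j\omega\|\in[(2q_{j+1})^{-1},q_{j+1}^{-1}]$ and the classical fact that for $0<k<q_{j+1}$ one has $\|k\omega\|\geq\|q_j\omega\|$. Decomposing each $k\leq N$ via its Ostrowski expansion $k=\sum_i a_iq_i$ (with $0\leq a_i\leq \lceil q_{i+1}/q_i\rceil$), I would prove that if the largest index $i$ with $a_i\neq 0$ equals $j$ then $\|k\omega\|\gtrsim 1/q_{j+1}$, so $k$ contributes at most $\log(2q_{j+1})$ to $S_N$. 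Counting shows that the number of such $k\leq N$ with top Ostrowski level exactly $j$ is $O(N/q_j)$, yielding
\[
S_N \;\leq\; \sum_{j\geq 0,\ q_j\leq N}\,\frac{N\log(2q_{j+1})}{q_j} + O(N) \;\leq\; N\bigl(\cB(\omega)+C_0\bigr),
\]
since $\sum_j 1/q_j$ converges (the $q_j$ grow at least like a Fibonacci sequence). Combined with Step~1, this gives the claimed lower bound $R(\omega)\geq e^{-\cB(\omega)-C_0}$.

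The main obstacle is the combinatorial bookkeeping behind the Ostrowski decomposition: controlling, level by level, how many $k\leq N$ can have their top digit at level $j$ and showing that the corresponding values $\{k\omega\}$ cannot cluster on the circle at scales below $1/q_{j+1}$. This is essentially the three-distance theorem together with a careful accounting of cancellations in $\|k\omega\|=\|\sum_i a_i(q_i\omega)\|$; once carried out, the series $\sum_j \log q_{j+1}/q_j$ appears as the natural upper bound, and the Brjuno condition is exactly what is needed to make it finite. Everything else (the reduction from sine to distance to the integers, the tail with $q_j>N$, and the comparison with $\liminf|(q;q)_n|^{1/n}$) is routine and is hidden in the universal constant $C_0$.
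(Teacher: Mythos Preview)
The paper's own proof is a single sentence: the bound is declared an immediate consequence of Davie's lemma, with references to \cite{CarlettiMarmi} and \cite{MarmiIntSmallDiv}. Your proposal therefore attempts much more than the paper does---you are essentially reconstructing Davie's lemma. The reduction to $S_N=\sum_{k=1}^N(-\log\|k\omega\|_\Z)$ and the target inequality $S_N\le N(\cB(\omega)+C)$ are exactly right, and they are the content of that lemma.

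There is, however, a genuine gap in your counting step. The claim that the number of $k\le N$ with top Ostrowski level exactly $j$ is $O(N/q_j)$ is false: top level $j$ simply means $q_j\le k<q_{j+1}$, and when $q_{j+1}\le N$ there are $q_{j+1}-q_j$ such integers, which is not $O(N/q_j)$ in general (take $\omega=[0;M,M,M,\dots]$ and $N=q_{j+1}$: the count is about $M^{j+1}$ while $N/q_j\approx M$). Pairing the correct but crude lower bound $\|k\omega\|_\Z\ge\|q_j\omega\|_\Z\gtrsim 1/q_{j+1}$ with the true count only yields $S_N=O(N\log N)$, which gives no positive lower bound on the radius.

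The fix---and this is what Davie's lemma actually does---is to stratify the integers $k$ by the \emph{size} of $\|k\omega\|_\Z$ rather than by their Ostrowski level. If two distinct $k,k'\le N$ both satisfy $\|k\omega\|_\Z<\tfrac12\|q_{j-1}\omega\|_\Z$, then $\|(k-k')\omega\|_\Z<\|q_{j-1}\omega\|_\Z$ and the best-approximation property forces $|k-k'|\ge q_j$; hence at most $N/q_j+1$ integers in $[1,N]$ lie in that set. Summing the shell contributions then gives precisely the Brjuno series you wrote down. Your final displayed inequality is thus correct, but the route to it needs this correction.
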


\begin{proof}[Sketch of the proof.]
Suppose that $\omega$ is a Brjuno number, then our statement is much easier than the ones
cited above and its actually an immediate consequence of the Davie's lemma
(\cf \cite[Lemma 5.6 (c)]{MarmiIntSmallDiv} or \cite[Lemma B.4,3)]{CarlettiMarmi}).
\end{proof}

We set $\|x\|_\Z=\inf_{k\in\Z}|x+k|$.
Then, as far as the series $\sum_{n\geq 0}\frac{x^n}{1-q^n\la}$ is concerned, we have:

\begin{lemma}\label{lemma:qn-la}
The following assertions are equivalent:
\begin{enumerate}
\item
The series $\sum_{n\geq 0}\frac{x^n}{1-q^n\la}$ is convergent.

\item
$\ds\limsup_{n\to\infty}
\frac{\log |1-\la q^n|^{-1}}n<+\infty$.

\item
$\ds\liminf_{n\to\infty}\|n\omega+\a\|_\Z^{1/n}>0$.
\end{enumerate}
\end{lemma}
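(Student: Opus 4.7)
The plan is to prove (1)$\Leftrightarrow$(2) by the Cauchy--Hadamard formula for the radius of convergence, and (2)$\Leftrightarrow$(3) by identifying $|1-q^n\la|$ with $2|\sin(\pi(n\omega+\a))|$ and then using elementary trigonometric bounds. In both steps the hypothesis $\la\not\in q^\Z$, equivalently $n\omega+\a\not\in\Z$ for every $n\in\Z$, is what keeps the three quantities under comparison strictly positive.

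For (1)$\Leftrightarrow$(2), I would simply apply Cauchy--Hadamard: the radius of convergence of $\sum_{n\geq 0}\frac{x^n}{1-q^n\la}$ is
$$
\rho=\l(\limsup_{n\to\infty}|1-q^n\la|^{-1/n}\r)^{-1}\,,
$$
so positivity of $\rho$ (\ie convergence of the series on some neighbourhood of $0$) is equivalent to $\limsup_{n\to\infty}|1-q^n\la|^{-1/n}<+\infty$, which on taking logarithms is exactly (2).

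For (2)$\Leftrightarrow$(3), the bridge is the elementary identity $|1-e^{i\phi}|=2|\sin(\phi/2)|$ applied to $\phi=2\pi(n\omega+\a)$: since $q=\exp(2i\pi\omega)$ and $\la=\exp(2i\pi\a)$,
$$
|1-q^n\la|=|1-e^{2i\pi(n\omega+\a)}|=2|\sin(\pi(n\omega+\a))|\,,
$$
and by $1$-periodicity of $|\sin(\pi\cdot)|$ this equals $2|\sin(\pi\|n\omega+\a\|_\Z)|$. Since $\|n\omega+\a\|_\Z\in[0,1/2]$, the two-sided bound $2|y|\leq |\sin(\pi y)|\leq\pi|y|$ on $[0,1/2]$ shows that $|1-q^n\la|$ and $\|n\omega+\a\|_\Z$ are comparable up to universal multiplicative constants independent of $n$. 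These constants disappear after taking $1/n$-th powers, and the general fact that $\limsup_n a_n^{-1/n}=(\liminf_n a_n^{1/n})^{-1}$ for any positive sequence converts the finiteness in (2) into the strict positivity in (3).

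There is essentially no genuine analytic obstacle: the lemma is bookkeeping that records three equivalent reformulations of the same diophantine small-divisor condition on the pair $(\omega,\a)$. The one point to watch is that $\la\not\in q^\Z$ is used throughout to ensure that $|1-q^n\la|>0$ and $\|n\omega+\a\|_\Z>0$ for every $n$, so that none of the three conditions degenerates. The useful content for the sequel is the reduction of convergence of $\phi_{(q;\la)}(x)$, via Proposition \ref{prop:smalldivisors}, to a standard arithmetic condition on how well the orbit $\{n\omega+\a\bmod 1\}_{n\geq 0}$ stays away from $0\in\R/\Z$.
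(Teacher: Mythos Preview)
Your proposal is correct and follows essentially the same approach as the paper: the equivalence $(1)\Leftrightarrow(2)$ is Cauchy--Hadamard, and $(2)\Leftrightarrow(3)$ goes through the identity $|1-q^n\la|=2\sin(\pi\|n\omega+\a\|_\Z)$ together with elementary two-sided bounds comparing $\sin(\pi y)$ to $y$ on $[0,1/2]$. Your use of the Jordan inequality $2y\leq\sin(\pi y)\leq\pi y$ is in fact slightly cleaner than the paper's lower bound $\sin(\pi y)>\min(y,1/4)$, but the argument is otherwise the same.
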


\begin{proof}
The equivalence between 1. and 2. is straightforward. Let us prove the equivalence
``$1\Leftrightarrow 3$'' (using a really classical argument).
\par
Notice that for any $x\in[0,1/4]$ we have
$f(x):=sin(\pi x)-x\geq 0$, in fact
$f(0)=0$ and $f^\p(x)=\pi cos(\pi x)-1\geq 0$.
Therefore we conclude that the following inequality
holds for any $x\in[0,1/2]$:
$$
\sin(\pi x)>\min\big(x,1/4\big)\,.
$$
This implies that:
$$
|q^n\la-1|=\l|\exp\l(2i\pi (n\omega+\a)\r)-1\r|=2\sin\l(\pi\|n\omega+\a\|_\Z\r)
\in\Big[\min\l(2\|n\omega+\a\|_\Z,1/2\r),2\pi\|n\omega+\a\|_\Z\Big[\,
$$
and ends the proof.
\end{proof}

\begin{rmk}
A basic notion in complex dynamics is that a number $\a$ is diophantine with respect to
another number, say $\omega$. If $\a$ is diophantine with respect to $\omega$ then $\a$ and $\omega$
have the properties of the previous lemma.
It is known that, for a given $\omega\in[0,1]\smallsetminus \Q$,
the complex numbers $\exp(2i\pi\a)$ such that $\a$ is diophantine with respect to $\omega$
form a subset of the unit circle of full Lebesgue measure; \cf \cite[\S 1.3]{BernikDodson}.
\end{rmk}

\subsection{A corollary}

Let:
\begin{trivlist}
\item $\bullet$
$q=\exp(2i\pi\omega)$, with
$\omega\in(0,1)\smallsetminus\Q$;

\item $\bullet$
$m\in\Z_{>0}$ and $\la_i=\exp(2i\pi\a_i)$, for $i=1,\dots,m$,
with $\a_i\in(0,1]$ and $\la_i\not\in q^{\Z}$.
\end{trivlist}
For further reference we state the corollary below which is an immediate consequence
of Proposition \ref{prop:smalldivisors}:

\begin{cor}\label{cor:smalldivisors}
Let $\La=(\la_1,\dots,\la_m)$. The series
\beq\label{eq:smalldivisors}
\phi_{(q;\La)}(x)=\sum_{n\geq 0}\frac{x^n}{{(\la_1;q)_n\cdots(\la_m;q)_n}}\in\C[[x]]\,
\eeq
converges if and only if
both the series $\sum_{n\geq 0}\frac{x^n}{(q;q)_n}$ and
the series $\sum_{n\geq 0}\frac{x^n}{1-q^n\la_i}$, for $i=1,\dots,m$, converge.
Under these assumptions the radius of convergence of $\phi_{(q;\La)}(x)$ is at least:
$$
R(\omega)^m\cdot\prod_{i=1}^m\inf\l(1,r(\a_i)\r)\,.
$$
\end{cor}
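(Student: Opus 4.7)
The plan is to recognize $\phi_{(q;\La)}(x)$ as the iterated Hadamard product of the one-parameter series $\phi_{(q;\la_i)}(x)=\sum_{n\geq 0}x^n/(\la_i;q)_n$ treated in Proposition \ref{prop:smalldivisors}, and then to reduce the corollary to $m$ applications of that proposition. I will use the classical fact that if $f(x)=\sum a_n x^n$ and $g(x)=\sum b_n x^n$ have radii of convergence $R_f$ and $R_g$, then the Hadamard product $(f\odot g)(x)=\sum a_n b_n x^n$ has radius of convergence at least $R_f R_g$, an immediate consequence of $\limsup|a_nb_n|^{1/n}\leq(\limsup|a_n|^{1/n})(\limsup|b_n|^{1/n})$.

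For the ``if'' direction and the quantitative lower bound, I would assume that $\sum x^n/(q;q)_n$ and each $\sum x^n/(1-q^n\la_i)$ converge, so that Proposition \ref{prop:smalldivisors} gives each $\phi_{(q;\la_i)}(x)$ a radius of convergence at least $R(\omega)\inf(1,r(\a_i))$. The iterated Hadamard product $\phi_{(q;\La)}=\phi_{(q;\la_1)}\odot\cdots\odot\phi_{(q;\la_m)}$ would then converge with radius at least $R(\omega)^m\prod_{i=1}^m\inf(1,r(\a_i))$, which is exactly the bound stated.

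For the converse, I would assume that $\phi_{(q;\La)}(x)$ has positive radius of convergence. Since $|q|=|\la_i|=1$, the elementary bound $|1-q^k\la_i|\leq 2$ gives $|(\la_i;q)_n|\leq 2^n$ for every $i$ and $n$, whence
$$
\frac{1}{|(\la_j;q)_n|}\leq\frac{2^{(m-1)n}}{\prod_{i=1}^m|(\la_i;q)_n|}\qquad\text{for each }j.
$$
So at most exponential growth of the coefficients of $\phi_{(q;\La)}$ would force at most exponential growth of those of each single-parameter series $\phi_{(q;\la_j)}$, placing me back in the setting of Proposition \ref{prop:smalldivisors}: applying it to any single $j$ yields convergence of $\sum x^n/(q;q)_n$, and applying it to each $j$ in turn yields convergence of $\sum x^n/(1-q^n\la_j)$. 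No genuine obstacle appears once Proposition \ref{prop:smalldivisors} is in hand; the corollary is essentially an $m$-variable packaging of the one-variable statement, bridged by the standard Hadamard product inequality.
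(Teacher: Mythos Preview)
Your proposal is correct and matches the paper's intent: the paper itself gives no proof, stating only that the corollary ``is an immediate consequence of Proposition \ref{prop:smalldivisors}''. Your Hadamard-product packaging and the coefficientwise bound $|(\la_i;q)_n|\leq 2^n$ are exactly the natural way to unpack that sentence, so there is nothing to add.
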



\section{Analytic factorization of $q$-difference operators}
\label{sec:factorisation}

\begin{notation}
Let $(\bC,|~|)$ be either the field of complex numbers with the usual norm
or an algebraically
closed field with an ultrametric norm. We fix $q\in\bC$, such that $|q|=1$
and $q$ is not a root of unity,
and a set of elements $q^{1/n}\in\bC$ such that $(q^{1/n})^n=q$.
If $\bC=\C$ then let $\omega\in(0,1]\smallsetminus\Q$ be such that $q=\exp(2i\pi\omega)$.
\par
\emph{We suppose that the series $\sum_{n\geq 0}\frac{x^n}{(q;q)_n}$ is convergent},
which happens for instance if $\omega$ is a Brjuno number.
\end{notation}

The contents of this section
is largely inspired by \cite{sauloyfiltration},
where the author proves an analytic classification result for $q$-difference
equations with $|q|\neq 1$: the
major difference is the small divisor problem that the assumption $|q|=1$ introduces.
Of course, once the small divisor problem is solved, the techniques are the same.
For this reason some proofs will be only sketched.

\subsection{The Newton polygon}
\label{subsec:NP}

We consider a $q$-difference operator
$$
\cL=\sum_{i=0}^\nu a_i(x)\sgq^i\in\bC\{x\}[\sgq]\,,
$$
\ie  an element of the skew ring $\bC\{x\}[\sgq]$, where
$\bC\{x\}$ is the $\bC$-algebra of germs
of analytic function at zero and $\sgq f(x)=f(qx)\sgq$.
The associated
$q$-difference equations is
\beq\label{eq:q-diff}
\cL y(x)=a_\nu(x)y(q^\nu x)+a_{\nu-1}(x)y(q^{\nu-1}x)+\dots+a_0(x)y(x)=0\,.
\eeq
We suppose that $\a_\nu(x)\neq 0$, and we call
$\nu$ is the \emph{order} of $\cL$ (or of $\cL y=0$).

\begin{defn}
The \emph{Newton polygon} $NP(\cL)$ of the equation $\cL y=0$ (or of the operator $\cL$)
is the convex envelop in $\R^2$ of the following set:
$$
\l\{(i,k)\in\Z\times\R~:~i=0,\dots,\nu;~a_i(x)\neq 0,~k\geq\ord_x a_i(x)\r\}\,,
$$
where $\ord_x a_i(x)\geq 0$ denotes the order of zero of $a_i(x)$ at $x=0$.
\end{defn}

Notice that the polygon $NP(\cL)$ has a finite number of finite slopes,
which are all rational and can be negative, and two infinite vertical sides.
We will denote $\mu_1,\dots,\mu_k$ the finite slopes of $NP(\cL)$ (or, briefly of $\cL$),
ordered so that
$\mu_1<\mu_2<\dots<\mu_k$ (\ie from left to right),
and $r_1,\dots,r_k$ the length of their respective projections on the
$x$-axis. Notice that $\mu_ir_i\in\Z$ for any $i=1,\dots,k$.
\par
We can always assume, and we will actually assume, that
the boundary of the Newton polygon of $\cL$ and the $x$-axis
intersect only in one point or in a segment,
by clearing some common powers of $x$ in the coefficients of $\cL$.
Once this convention fixed, the Newton polygon is completely determined
by the set $\{(\mu_1,r_1),\dots,(\mu_k,r_k)\}\in\Q\times\Z_{>0}$, therefore
we will identify the two data.

\begin{defn}
A $q$-difference operator, whose Newton polygon has only one slope (equal to $\mu$)
is called \emph{pure (of slope $\mu$)}.
\end{defn}

\begin{rmk}\label{rmk:ManipSlopes}
All the properties of Newton polygons of $q$-difference equations
listed in \cite[\S1.1]{sauloyfiltration}
are formal and therefore independent of the field $\bC$ and of the norm of $q$: they can be rewritten,
with exactly the same proof, in our case.
We recall, in particular, two properties of the Newton polygon that we will use in the sequel
(\cf \cite[\S1.1.5]{sauloyfiltration}):\\
$\bullet$
Let $\theta$ be a solution, in some formal extension of
$\bC(\{x\})=\mathop{Frac}(\bC\{x\})$, of the $q$-difference equation $y(qx)=xy(x)$.
The twisted conjugate operator
$x^C\theta^\mu\cL\theta^{-\mu}\in\bC\{x\}[\sgq]$,
where $C$ is a convenient non negative integer,
is associated to the $q$-difference equation\footnote{Notice that there is
no need of determine the function $\theta$.}
\beq\label{eq:twistedconj}
a_\nu(x)q^{-\mu\frac{\nu(\nu+1)}{2}}x^{C-\mu\nu}y(q^\nu x)
+a_{\nu-1}(x)q^{-\mu\frac{\nu(\nu-1)}{2}}x^{C-\mu(\nu-1)}y(q^{\nu-1}x)
+\dots+x^C a_0(x)y(x)=0\,,
\eeq
and has Newton polygon $\{(\mu_1-\mu,r_1),\dots,(\mu_k-\mu,r_k)\}$.\\
$\bullet$
If $e_{q,c}(x)$ is a solution of $y(qx)=cy(x)$, with $\in\bC^\ast$.
Then the twisted operator
$e_{q,c}(x)^{-1}\cL e_{q,c}(x)$ has the same Newton polygon as $\cL$,
while all the zeros of the polynomial $\sum_{i=0}^\nu a_i(0)T^i$
are multiplied by $c$.
\end{rmk}

\subsection{Admissible $q$-difference operators}
\label{subsec:admissibleoperator}

Suppose that $0$ is a slope of $NP(\cL)$.
We call \emph{characteristic polynomial of the zero slope} the polynomial
$$
a_\nu(0)T^\nu+a_{\nu-1}(0)T^{\nu-1}+\dots+a_0(0)=0\,.
$$
The characteristic polynomial of a slope $\mu\in\Z$
is the characteristic polynomial of the zero slope of the $q$-difference operator
$x^C\theta^{\mu}\cL\theta^{-\mu}$ (\cf Equation \ref{eq:twistedconj}).
In the general case, when $\mu\in\Q\smallsetminus\Z$, we reduce to the previous assumption
by performing a ramification.
Namely, for a convenient $n\in\Z_{>0}$, we set $t=x^{1/n}$.
With this variable change, the operator $\cL$ becomes
$\sum a_i(t^n)\sg_{q^{1/n}}^i$.
Notice that the characteristic polynomial does not depend on
the choice of $n$.
\par
Finally, we call the non zero roots of the characteristic polynomial of the slope $\mu$ the
\emph{exponents of the slope $\mu$}. The cardinality of the set $\Exp(\cL,\mu)$ of the exponents of the
slope $\mu$, counted with multiplicities, is equal to the length of the
projection of $\mu$ on the $x$-axis.

\begin{defn}\label{defn:admissible}
Let $(\la_1,\dots,\la_r)$ be the exponents of the slope $\mu$ of $\cL$ and
let
$$
\ul\La=\{\la_{i}\la_{j}^{-1}~:~ i,j=1,\dots,r\,;\,\,\la_{i}\la_{j}^{-1}\not\in q^{\Z_{\leq 0}}\}\,.
$$
We say that a slope $\mu\in\Z$ of $\cL$ is \emph{admissible} if the series
$\phi_{(q;\ul\La)}(x)$ (\cf Eq. \ref{eq:smalldivisors})
is convergent and that a slope $\mu\in\Q$
is \emph{almost admissible} if it becomes
admissible in $\bC\{x^{1/n}\}[\sg_{q^{1/n}}]$, for a convenient $n\in\Z_{>0}$.
\par
A $q$-difference operator is said to be \emph{admissible}
(resp. \emph{almost admissible}) if
all its slopes are admissible (resp. \emph{almost admissible}).
\end{defn}

\begin{rmk}
A rank 1 $q$-difference equation is admissible as soon as the series
$\sum_{n\geq 0}\frac{x^n}{(q;q)_n}$ is convergent.
\end{rmk}

\subsection{Analytic factorization of admissible $q$-difference operators}

The main result of this subsection is the analytic factorization
of admissible $q$-difference operators.
The analogous result
in the case $|q|\neq 1$ is well known
(\cf \cite{MarotteZhang}, \cite[\S1.2]{sauloyfiltration},
or, for a more detailed exposition, \cite[\S1.2]{Spoligono}). The germs of those
works are already in \cite{BirkhoffGuenther}, where the authors establish
a canonical form for solution of analytic $q$-difference systems.

\begin{thm}\label{thm:adams}
Suppose that the $q$-difference operator $\cL$ is admissible, with Newton polygon
$\{(\mu_1,r_1),\dots,(\mu_k,r_k)\}$.
Then for any permutation $\varpi$ of the set $\{1,\dots,k\}$
there exists a factorization of $\cL$:
$$
\cL=\cL_{\varpi,1}\circ\cL_{\varpi,2}\circ\dots\circ\cL_{\varpi,k}\,,
$$
such that $\cL_{\varpi,i}\in\bC\{x\}[\sgq]$ is
admissible and pure of slope $\mu_{\varpi(i)}$ and order $r_{\varpi(i)}$.
\end{thm}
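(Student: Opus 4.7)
I argue by induction on the number $k$ of slopes of $\cL$. The case $k=1$ is vacuous. For $k \ge 2$ it suffices to establish two ``peeling'' statements: (a) $\cL$ admits a factorization $\cL = \cL' \circ \cL_{\min}$ in $\bC\{x\}[\sgq]$ with $\cL_{\min}$ pure, admissible, of slope $\mu_1$ and order $r_1$; and symmetrically (b) $\cL = \cL_{\max} \circ \cL''$ with $\cL_{\max}$ pure of slope $\mu_k$ and order $r_k$. Granting these, the cofactor in each case inherits the complementary Newton polygon by additivity of slopes under composition, keeps the exponents of $\cL$ at each of its slopes, and is therefore still admissible; the inductive hypothesis then applies. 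An arbitrary permutation $\varpi$ of $\{1,\dots,k\}$ is realized by iteratively invoking (a) or (b) on the current residual factor, according as the slope to be extracted next is the minimum or the maximum among the remaining slopes.

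\textbf{Normalization for (a).} Using the twist $\cL \mapsto x^C \theta^{\mu_1}\cL\theta^{-\mu_1}$ of Remark \ref{rmk:ManipSlopes}, whose effect on the Newton polygon is a translation of the slopes by $-\mu_1$ and which preserves admissibility (it only rescales the exponents of a given slope by a common power of $q$), I may assume $\mu_1 = 0$; all other slopes of $\cL$ are then strictly positive. The slope-$0$ segment has length $r_1$ and characteristic polynomial $\prod_{i=1}^{r_1}(T-\la_i)$ where the $\la_i$'s are the slope-$0$ exponents.

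\textbf{Formal factorization.} I look for $\cL_{\min}$ of the form
\[
\cL_{\min} = \sgq^{r_1} + b_{r_1-1}(x)\sgq^{r_1-1} + \dots + b_0(x),\qquad b_j \in \bC[[x]],
\]
with constant terms $b_j(0)$ dictated by the characteristic polynomial. Matching $\cL = \cL' \circ \cL_{\min}$ coefficient by coefficient in $x$ yields a triangular recursion determining simultaneously the Taylor coefficients of the $b_j$ and those of $\cL'$; at order $n$ the relevant linear systems have determinants whose non-constant factors are of the type $1 - q^n \la_i\la_j^{-1}$ for pairs $(\la_i,\la_j)$ of slope-$0$ exponents with $\la_i\la_j^{-1} \notin q^{\Z_{\le 0}}$, that is, precisely the elements of the set $\ul\La$ of Definition \ref{defn:admissible}. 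None of these factors vanish, so the formal solution exists and is unique.

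\textbf{Convergence — the only real difficulty.} The crux, and the point where the small-divisor analysis of \S\ref{sec:smalldiv} enters, is analyticity of the $b_j$ at the origin. A direct majorant argument on the recursion produces an estimate of the form
\[
|b_{j,n}| \le A\,B^n \,[x^n]\,\phi_{(q;\ul\La)}(x)
\]
for constants $A, B > 0$ depending only on the analytic coefficients of $\cL$. By admissibility of the slope $0$, Corollary \ref{cor:smalldivisors} guarantees that $\phi_{(q;\ul\La)}$ has positive radius of convergence, whence $b_j \in \bC\{x\}$ and $\cL_{\min} \in \bC\{x\}[\sgq]$. The cofactor $\cL'$ is then recovered by right Euclidean division of $\cL$ by $\cL_{\min}$ inside $\bC\{x\}[\sgq]$, which is legitimate because $\cL_{\min}$ has leading coefficient $1$; the remainder vanishes by construction. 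Case (b) is entirely symmetric, right division being replaced by left division, and this closes the induction.
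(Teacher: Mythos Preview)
There is a genuine gap in your induction scheme. Your statements (a) and (b) allow you to strip the \emph{minimum} slope from the right or the \emph{maximum} slope from the left, and you assert that ``an arbitrary permutation $\varpi$ is realized by iteratively invoking (a) or (b) according as the slope to be extracted next is the minimum or the maximum among the remaining slopes''. But the slope to be extracted next need not be either. Already for $k=2$ this fails: (a) yields $\cL=\cL_{\mu_2}\circ\cL_{\mu_1}$ and (b) yields the same factorization, so only the ordering $\varpi=(2,1)$ is reachable and $\varpi=(1,2)$ is not. More generally, peeling the current minimum from the right and the current maximum from the left always rebuilds the single ordering $\cL_{\mu_k}\circ\cL_{\mu_{k-1}}\circ\cdots\circ\cL_{\mu_1}$; the other $k!-1$ permutations are never produced. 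The paper avoids this by proving (Proposition~\ref{prop:adams} via Lemma~\ref{lemma:adams}) that one can peel off \emph{any} chosen admissible integral slope $\mu$ as a right factor, not just the smallest one; after twisting to $\mu=0$, the characteristic polynomial $F_0(T)=\sum a_{i,0}T^i$ factors as $T^{r_-}$ times the slope-$0$ polynomial (where $r_-$ is the total multiplicity of negative slopes), and since $|q^{nr_-}|=1$ the extra factor is harmless in the small-divisor estimate. Your normalization ``all other slopes are then strictly positive'' is therefore a convenience, not a necessity, and dropping it is exactly what is needed to reach every permutation.

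A secondary point: the paper does not look for the pure factor $\cL_\mu$ as a block of order $r$. Instead Lemma~\ref{lemma:adams} peels off a single rank-one right factor $(x^\mu\sgq-\la)h(x)$ by solving $\cL y=0$ for $y\in 1+x\bC\{x\}$; the recursion $F_0(q^n)y_n=-\sum_{l<n}F_{n-l}(q^l)y_l$ is explicit and the majorant series is immediately controlled by $\phi_{(q;\ul\La)}$. Your block recursion and the claimed bound $|b_{j,n}|\le A\,B^n[x^n]\phi_{(q;\ul\La)}(x)$ are plausible but are asserted rather than derived; the one-exponent-at-a-time route makes the convergence step transparent and then obtains the order-$r$ pure factor by iteration.
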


\begin{rmk}~\\
$\bullet$
Given the permutation $\varpi$, the $q$-difference operator $\cL_{\varpi,i}$
is uniquely determined, modulo a factor in  $\bC\{x\}$.\\
\smallskip
$\bullet$
Exactly the same statement holds for almost admissible $q$-difference operator
(\cf Theorem \ref{thm:AdamsAlmostAdmissible} below).
\end{rmk}

Theorem \ref{thm:adams} follows from the recursive application of the statement:

\begin{prop}\label{prop:adams}
Let $\mu\in\Z$ be an admissible slope of the Newton polygon of $\cL$ and let $r$ be the
length of its projection on the $x$-axis.
Then the $q$-difference operator $\cL$ admits a factorization
$\cL=\wtilde\cL\circ\cL_\mu$,
such that:
\begin{trivlist}
\item 1.
the operator $\wtilde\cL$ is in $\bC\{x\}[\sgq]$ and
$NP(\wtilde\cL)=NP(\cL)\smallsetminus\{(\mu,r)\}$;
\item 2.
the operator $\cL_\mu$ has the form:
$$\cL_\mu=(x^\mu\sgq-\la_r)h_r(x)\circ(x^\mu\sgq-\la_{r-1})h_{r-1}(x)\circ
\cdots\circ (x^\mu\sgq-\la_1)h_1(x)\,,$$
where:
\begin{itemize}
\item
$\la_1,\dots,\la_r\in\bC$ are the exponents of the slope $\mu$,
ordered so that if $\frac{\la_i}{\la_j}\in q^{\Z_{>0}}$ then $i<j$;
\item
$h_1(x),\dots,h_r(x)\in 1+x\bC\{x\}$.
\end{itemize}
\end{trivlist}
Moreover if $\cL$ is admissible (resp. almost admissible),
the operator $\wtilde\cL$ is also admissible (resp. almost admissible).
\end{prop}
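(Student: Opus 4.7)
I will extract the rightmost factor $(x^\mu\sgq-\la_1)h_1(x)$ first and then iterate $r$ times. As a preliminary, the first bullet of Remark \ref{rmk:ManipSlopes} lets me twist $\cL$ by $\theta^\mu$ (where $\theta$ solves $y(qx)=xy(x)$) and clear a power of $x$ to land in $\bC\{x\}[\sgq]$ with Newton polygon translated by $-\mu$: the admissible slope $\mu$ becomes the zero slope, the exponents are preserved, and since $\theta^{-\mu}(\sgq-\la)\theta^\mu=x^\mu\sgq-\la$, an upstairs factorization of the claimed form converts back to one of the form stated in the proposition. I therefore assume $\mu=0$ from now on.

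Next, I conjugate $\cL$ by a formal symbol $e_{q,\la_1}$ solving $\sgq y=\la_1 y$ (second bullet of Remark \ref{rmk:ManipSlopes}) to get
\[
\cM_1:=e_{q,\la_1}^{-1}\cL\, e_{q,\la_1}=\sum_i a_i(x)\la_1^i\sgq^i,
\]
which has the same Newton polygon as $\cL$ and whose slope-zero exponents are $\la_i/\la_1$; in particular $1$ is one of them. I then look for $g_1=1+\sum_{n\geq 1}g_{1,n}x^n\in 1+x\bC\[[x\]]$ with $\cM_1 g_1=0$. Writing $a_i(x)=\sum_j a_{i,j}x^j$ and identifying coefficients of $x^N$ yields the recursion
\[
P_0(q^N)\,g_{1,N}=-\sum_{n=0}^{N-1}P_{N-n}(q^n)\,g_{1,n},\qquad P_j(T):=\sum_i a_{i,j}\la_1^i T^i.
\]
The ordering ``$\la_i/\la_j\in q^{\Z_{>0}}\Rightarrow i<j$'' forces $\la_j/\la_1\notin q^{\Z_{>0}}$ for $j\geq 2$, so $P_0(q^N)$ equals, up to a nonzero constant, $\prod_j(q^N-\la_j/\la_1)\neq 0$ for every $N\geq 1$, and the formal series $g_1$ is uniquely determined.

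The crux of the proof is the convergence of $g_1$, and this is where the admissibility hypothesis is used essentially. Each factor $|P_0(q^N)|^{-1}$ is, up to harmless multiplicative constants, the product over $j$ of small divisors $|1-q^N\la_1/\la_j|^{-1}$, which are exactly those governing the Pochhammer symbols $(\la_1/\la_j;q)_{N+1}$ appearing in Corollary \ref{cor:smalldivisors}. A standard majorant argument applied to the recursion compares $\sum g_{1,N}x^N$ with the series $\phi_{(q;\ul\La)}(x)$, where $\ul\La$ is the set from Definition \ref{defn:admissible} attached to the slope $0$ of $\cL$. Since $\cL$ is admissible, $\phi_{(q;\ul\La)}$ has positive radius of convergence, and hence so does $g_1$.

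Finally, I set $h_1(x):=1/g_1(x)\in 1+x\bC\{x\}$. A short computation using $h_1g_1=1$ and $e_{q,\la_1}(qx)=\la_1e_{q,\la_1}(x)$ shows that $(\sgq-\la_1)h_1(x)$ annihilates $y_1:=e_{q,\la_1}g_1$, which also lies in $\ker\cL$. Because $h_1(qx)$ is a unit in $\bC\{x\}$, right-division of $\cL$ by $(\sgq-\la_1)h_1(x)$ inside $\bC\{x\}[\sgq]$ is possible and produces $\cL=\cL^{(1)}\circ(\sgq-\la_1)h_1(x)+\rho$ with $\rho\in\bC\{x\}$; evaluating on $y_1$ forces $\rho=0$. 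Additivity of Newton polygons under composition gives $NP(\cL^{(1)})=NP(\cL)\smallsetminus\{(0,1)\}$ with remaining slope-zero exponents $\la_2,\dots,\la_r$ still satisfying the required ordering; admissibility is inherited because the corresponding set $\ul\La'$ is contained in $\ul\La$. Iterating the construction $r$ times yields $\cL_\mu$ of the stated form and leaves $\wtilde\cL$ with $NP(\wtilde\cL)=NP(\cL)\smallsetminus\{(0,r)\}$; untwisting by $\theta^{-\mu}$ gives the statement for arbitrary $\mu\in\Z$.
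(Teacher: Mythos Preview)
Your proof is correct and follows essentially the same route as the paper. There the proposition is obtained by iterating Lemma~\ref{lemma:adams}, whose proof performs exactly your reductions (via Remark~\ref{rmk:ManipSlopes}) to $\mu=0$, $\la=1$, sets up the same recursion $F_0(q^n)y_n=-\sum_{l<n}F_{n-l}(q^l)y_l$, and proves convergence by the majorant argument you allude to; the paper spells out this last step in detail (introducing $s_n=F_0(1)\cdots F_0(q^n)y_n$ and bounding $|s_n|$ geometrically), whereas you leave it as ``a standard majorant argument'', but the content is the same.
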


Proposition \ref{prop:adams} itself follows from an iterated
application of the lemma below:

\begin{lemma}\label{lemma:adams}
Let $(\mu,r)\in NP(\cL)=\{(\mu_1,r_1),\dots,(\mu_k,r_k)\}$ be an integral slope of $\cL$
with exponents $(\la_1,\dots,\la_r)$.
Fix an exponent $\la$ of $\mu$ such that:\\
\smallskip
1. $q^n\la$ is not an exponent of the same slope
for any $n>0$;\\
\smallskip
2.
the series $\phi_{\l(q;\l(\frac{\la_1}{\la},\dots,\frac{\la_r}{\la}\r)\r)}(x)$ is convergent.\\
\smallskip
Then there exists a unique $h(x)\in1+x\bC\{x\}$ such that
$\cL=\wtilde\cL\circ(x^\mu\sgq-\la)h(x)$, for some $\wtilde\cL\in\bC\{x\}[\sgq]$.
Moreover let $\iota=1,\dots,k$ such that $\mu_\iota=\mu$:\\
\smallskip
$\bullet$
if $r_\iota=1$ then
$NP(\wtilde\cL)=\{(\mu_1,r_1),\dots,(\mu_{\iota-1},r_{\iota-1}),
(\mu_{\iota+1},r_{\iota+1}),\dots,(\mu_k,r_k)\}$;\\
\smallskip
$\bullet$
if $r_\iota>1$ then
$NP(\wtilde\cL)=\{(\mu_1,r_1),\dots,(\mu_\iota,r_\iota-1),\dots,(\mu_k,r_k)\}$
and $\Exp(\wtilde\cL,\mu_\iota)=\Exp(\cL,\mu_\iota)\smallsetminus\{\la\}$.
\end{lemma}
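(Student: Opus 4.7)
\emph{Strategy.} I aim to produce a unique convergent $h \in 1 + x\bC\{x\}$ such that $(x^\mu\sgq - \la)h(x)$ is a right factor of $\cL$, and then to read off $NP(\wtilde\cL)$ from a formal property of composition in $\bC\{x\}[\sgq]$.

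\emph{Reduction to $\mu = 0$, $\la = 1$.} Using the twists of Remark \ref{rmk:ManipSlopes}, I first multiply $\cL$ by an appropriate power of $x$ and conjugate by $\theta^\mu$, turning the slope $\mu$ into $0$ while preserving analyticity; I then twist by $e_{q,\la^{-1}}$, which multiplies all exponents of that slope by $\la^{-1}$. The resulting operator still lies in $\bC\{x\}[\sgq]$ and has $1$ as an exponent of slope $0$; hypothesis 1 becomes ``$q^m$ is not an exponent of slope $0$ for any $m > 0$'' and the factorization to prove reads $\cL = \wtilde\cL\circ(\sgq - 1)h$.

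\emph{From factorization to analytic solution.} Any $u \in 1 + x\bC\{x\}$ with $\cL u = 0$ gives a valid right factor via $h := 1/u$: indeed $(\sgq-1)(hu) = (\sgq-1)(1) = 0$, and Euclidean right-division of $\cL$ by $(\sgq-1)h$ inside $\bC\{x\}[\sgq]$ leaves a $0$th-order remainder which annihilates $u$ and so must vanish. Conversely any such $h$ yields $u := 1/h$ with $\cL u = 0$, so it suffices to construct and prove uniqueness of an analytic $u$ with $u(0) = 1$ solving $\cL u = 0$. Writing $u = \sum_{n \geq 0}u_n x^n$, the equation $\cL u = 0$ amounts to a recursion
$$
P(q^m)\,u_m\ =\ -\sum_{n<m} c_{m,n}\,u_n\qquad(m \geq 1),
$$
where $P(T) = \sum_i a_i(0)T^i$ is the characteristic polynomial at slope $0$ and the $c_{m,n}$ are polynomial in the Taylor coefficients of the $a_i(x)$. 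Hypothesis 1 gives $P(q^m) \neq 0$ for $m > 0$, so the $u_m$ are uniquely determined by $u_0 = 1$.

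\emph{Convergence and Newton polygon.} This is the key step. Factoring $P(T) = a_\nu(0)\prod_i(T - \la_i)$ and unrolling the recursion, the denominators in the expression of $u_m$ organise as products of the form $\prod_k\prod_i(q^k - \la_i)^{-1}$; combined with a majorant for the analytic coefficients of $\cL$, one obtains a bound
$$
|u_m|\ \leq\ C\,R^m\prod_{\lambda \in \ul\La}\bigl|(\lambda;q)_m\bigr|^{-1}
$$
for constants $C, R > 0$ depending only on a majorant series for $\cL$, so that hypothesis 2 together with Corollary \ref{cor:smalldivisors} furnishes a positive radius of convergence for $u$. The Newton polygon description of $\wtilde\cL$ and the list of its exponents at $\mu_\iota$ then follow from the Minkowski additivity of Newton polygons under composition in $\bC\{x\}[\sgq]$ (a formal property, \emph{cf.} Remark \ref{rmk:ManipSlopes}) applied to the pure first-order factor $(x^\mu\sgq - \la)h$ of slope $\mu$ and unique exponent $\la$. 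The genuinely hard point throughout is the convergence estimate: the bookkeeping of the iterated recursion must be done carefully enough to isolate the exact $q$-Pochhammer denominators to which Corollary \ref{cor:smalldivisors} applies; everything else is formal manipulation or classical right Euclidean division in a skew polynomial ring.
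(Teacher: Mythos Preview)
Your proposal follows the same route as the paper: reduce to $\mu=0$, $\la=1$ via the twists of Remark~\ref{rmk:ManipSlopes}; observe that a right factor $(\sgq-1)h$ with $h\in 1+x\bC\{x\}$ is equivalent to an analytic solution $u=h^{-1}\in 1+x\bC\{x\}$ of $\cL u=0$; determine $u$ uniquely from the recursion governed by the characteristic polynomial; and defer the Newton polygon statement to the formal additivity result in \cite{sauloyfiltration}.

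The one point where your write-up is thinner than the paper is the convergence estimate, which you yourself flag as the hard step. Your ``unroll the recursion and collect $q$-Pochhammer denominators'' is correct in spirit, but as stated it hides a nontrivial bookkeeping issue: after unrolling, the denominators are products $\prod_{j} F_0(q^{m_j})$ over \emph{subsets} $\{m_1>\cdots>m_s\}\subset\{1,\dots,m\}$, not the full product $\prod_{k=1}^m F_0(q^k)$, and one must also control the combinatorial number of terms. The paper sidesteps both difficulties with a single substitution: set $s_n=F_0(1)F_0(q)\cdots F_0(q^n)\,u_n$ and $t_n=s_n/(AB)^n$ (where $A,B$ come from a majorant $|F_{n-l}(q^l)|\le AB^{n-l}$); the recursion then collapses to $|t_n|\le\sum_{l<n}|t_l|$, which gives geometric growth of $t_n$ immediately. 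Convergence of $u$ then reduces to convergence of $\sum x^n/\bigl(F_0(1)\cdots F_0(q^n)\bigr)$, which is exactly hypothesis~2. This trick is worth internalising: it converts a messy tree-sum estimate into a one-line induction.
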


\begin{proof}
It is enough to prove the lemma for $\mu=0$ and $\la=1$ (\cf Remark \ref{rmk:ManipSlopes}).
Write $y(x)=\sum_{n\geq 0}y_nx^n$, with $y_0=1$, and $a_i(x)=\sum_{n\geq 0}a_{i,n}x^n$.
Then we obtain by direct computation that $\cL y(x)=0$ if and only if for any $n\geq 1$
we have:
$$
F_0(q^n)y_n=-\sum_{l=0}^{n-1}F_{n-l}(q^l)y_l\,,
$$
where
$F_l(T)=\sum_{i=0}^\nu a_{i,l}T^i$. Remark that assumption 1 is equivalent to the property:
$F_0(q^n)\neq 0$ for any $n\in\Z_{>0}$.
\par
The convergence of the coefficients $a_i(x)$ of $\cL$
implies the existence of two constants $A,B>0$ such that $|F_{n-l}(q^l)|\leq AB^{n-l}$,
for any $n\geq 0$ and any $l=0,\dots,n-1$.
We set
$$
s_n=F_0(1)F_0(q)\dots F_0(q^n)y_n\,.
$$
Then
$$
|s_n|
\leq\l|\sum_{l=0}^{n-1}s_lF_0(q^{l+1})\cdots F_0(q^{n-1})F_{n-l}(q^l)\r|
\leq A^nB^n\sum_{l=0}^{n-1}\frac{|s_l|}{(AB)^l}\,,
$$
and therefore:
$$
|t_n|\leq\sum_{l=0}^{n-1}|t_l|\,,
\hbox{~with~}t_l=\frac{s_l}{(AB)^l}\,.
$$
If $|t_l|<CD^l$, for any $l=0,\dots,n-1$, with $D>1$,
then $|t_n|\leq C\sum_{l=0}^{n-1}D^l\leq CD^n(D-1)^{-1}\leq CD^n$.
Therefore $|t_n|\leq CD^n$ for any $n\geq 1$, and hence  $|s_n|\leq C(ABD)^n$.
Hypothesis 2 assures that the series
$\sum_{n\geq 1}\frac{x^n}{F_0(1)\cdots F_0(q^n)}$ is convergent and therefore
that $y(x)$ is convergent. We conclude setting $h(x)=y(x)^{-1}$.
\par
For the assertion on the Newton polygon \cf\cite{sauloyfiltration}.
\end{proof}

For further reference we point out that we have actually proved the
following corollaries:

\begin{cor}\label{cor:adams}
Under the hypothesis of Lemma \ref{lemma:adams}, suppose that $\cL$ has a right factor
of the form $(\sgq^\mu-\la)\circ h(x)$, with $\mu\in\Q$, $\la\in\bC^\ast$ and $h(x)\in \bC[[x]]$.
Then $h(x)$ is convergent.
\end{cor}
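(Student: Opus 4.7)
The plan is to observe that the proof of Lemma~\ref{lemma:adams} establishes, beyond the existence of a convergent right factor, a uniqueness statement for the formal right factor, which is precisely what the corollary asserts.

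First, by the twisted conjugation recalled in Remark~\ref{rmk:ManipSlopes}, I would reduce to the case $\mu=0$ and $\la=1$: the assumption that $\cL$ has a formal right factor of the prescribed shape translates, after twisting, into the existence of a right factor $(\sgq-1)\circ h(x)$ of the twisted operator with $h(x)\in\bC[[x]]$. After rescaling $h$ by a nonzero constant (which commutes with $\sgq-1$ and has no effect on convergence) one may assume $h(0)=1$; note that $h(0)=0$ is excluded, since otherwise the order in $\sgq$ of the right factor would drop and it could no longer right-divide $\cL$ in the form required.

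Now set $y(x)=h(x)^{-1}\in\bC[[x]]$, so that $h(x)y(x)=1$. Then
\[
\cL\,y(x)=\bigl(\wtilde\cL\circ(\sgq-1)h(x)\bigr)y(x)=\wtilde\cL\bigl((\sgq-1)(1)\bigr)=0,
\]
so $y(x)=1+\sum_{n\ge 1}y_nx^n$ is a formal power series solution of $\cL y=0$ normalized by $y_0=1$. By assumption~1 of Lemma~\ref{lemma:adams} we have $F_0(q^n)\ne 0$ for every $n\ge 1$, hence the recursion
\[
F_0(q^n)y_n=-\sum_{l=0}^{n-1}F_{n-l}(q^l)y_l
\]
derived in the proof of the lemma determines $y(x)$ uniquely. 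Therefore $y(x)$ must coincide with the formal series considered there, and the estimate $|s_n|\le C(ABD)^n$ obtained from assumption~2 (the convergence of $\phi_{(q;\ul\La)}$) gives that $y(x)$ has positive radius of convergence; consequently $h(x)=y(x)^{-1}$ does as well.

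Nothing genuinely new needs to be proved: the corollary is a direct reading of the recursion and estimates inside the proof of Lemma~\ref{lemma:adams}, applied to the observation that any formal right factor of the prescribed form is forced to be the one constructed there. The only delicate points to be checked are the normalization $h(0)=1$ and the correspondence between right-divisibility in $\bC[[x]][\sgq]$ and formal solutions of $\cL y=0$; neither presents a real obstacle.
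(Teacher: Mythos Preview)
Your proposal is correct and matches the paper's approach: the paper gives no separate proof, stating only that the corollary has ``actually been proved'' in the course of Lemma~\ref{lemma:adams}, and you have correctly unpacked what this means---the recursion $F_0(q^n)y_n=-\sum_{l<n}F_{n-l}(q^l)y_l$ forces uniqueness of the formal solution $y(x)\in 1+x\bC[[x]]$, so any formal right factor must coincide with the convergent one produced there.
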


\begin{rmk}
Corollary \ref{cor:adams} above generalizes \cite[Thm. 6.1]{Bezivin},
where the author proves that a formal solution of an analytic $q$-difference operator
satisfying some diophantine assumptions is always convergent.
\end{rmk}

\begin{cor}\label{cor:adamsKn}
Any almost admissible $q$-difference operator $\cL$ admits an analytic
factorization in $\bC\{x^{1/n}\}[\sgq]$,
with $\sgq x^{1/n}=q^{1/n}x^{1/n}$, for a convenient
$n\in\Z_{>0}$.
\par
The irreducible
factors of $\cL$ in $\bC\{x^{1/n}\}[\sgq]$ are of the
form $(x^{\mu/n}\sgq-\la)h(x^{1/n})$, with $\mu\in\Z$, $\la\in\bC^\ast$ and
$h(x^{1/n})\in 1+x^{1/n}\bC\{x^{1/n}\}$.
\end{cor}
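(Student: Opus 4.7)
The plan is to deduce Corollary \ref{cor:adamsKn} from Theorem \ref{thm:adams} and Proposition \ref{prop:adams} by means of a single ramification of the variable. Since $\cL$ is almost admissible, Definition \ref{defn:admissible} provides an $n \in \Z_{>0}$---which one can take to be a common denominator of the finite slopes of $\cL$, enlarged if necessary---such that, after the substitution $t = x^{1/n}$, the operator $\cL$ becomes honestly admissible as an element of $\bC\{t\}[\sg_{q^{1/n}}]$. The gluing observation is that the prescription $\sgq x^{1/n} = q^{1/n} x^{1/n}$ identifies $\sgq$ restricted to $\bC\{x^{1/n}\}$ with $\sg_{q^{1/n}}$ acting on $\bC\{t\}$, so the two skew polynomial rings $\bC\{x^{1/n}\}[\sgq]$ and $\bC\{t\}[\sg_{q^{1/n}}]$ are canonically the same; and the Newton polygon computation recalled in Remark \ref{rmk:ManipSlopes} shows that the ramification $x = t^n$ multiplies all slopes and heights by $n$, in particular turning every rational slope of $\cL$ into an integer slope in the ramified picture.

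Once inside the admissible regime, I apply Theorem \ref{thm:adams} to obtain a factorization of $\cL$ into pure admissible factors of integer slopes. Each such pure factor is then further split into rank $1$ constituents by iterating Proposition \ref{prop:adams}: starting from a pure admissible factor of slope $\mu' \in \Z$ and order $r$, I peel off rank $1$ right factors of the form $(t^{\mu'} \sg_{q^{1/n}} - \la_i) h_i(t)$, ordering the exponents as prescribed in Proposition \ref{prop:adams} so that hypothesis~1 of Lemma \ref{lemma:adams} is satisfied at every step. Undoing the identification $t = x^{1/n}$ turns each such factor into $(x^{\mu/n} \sgq - \la) h(x^{1/n})$ with $\mu = \mu' \in \Z$, $\la \in \bC^\ast$ and $h(x^{1/n}) \in 1 + x^{1/n}\bC\{x^{1/n}\}$, which is precisely the required shape. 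Irreducibility of such a factor in $\bC\{x^{1/n}\}[\sgq]$ is immediate from the fact that it has order $1$ in $\sgq$.

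The main---essentially only---obstacle is the bookkeeping around the ramification: one has to verify that a single $n$ can be chosen simultaneously to clear the denominators of all the slopes and to make the resulting ramified operator admissible in the sense of Definition \ref{defn:admissible}. Both requirements are already built into the definition of almost admissibility, so the argument ultimately amounts to a mechanical transport of Theorem \ref{thm:adams} and Proposition \ref{prop:adams} through the variable change $x = t^n$.
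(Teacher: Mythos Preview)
Your argument is correct and follows the same route the paper intends: the paper states this corollary immediately after Lemma~\ref{lemma:adams} with the sentence ``we have actually proved the following corollaries'' and gives no separate proof, so your explicit transport of Theorem~\ref{thm:adams} and Proposition~\ref{prop:adams} through the ramification $t=x^{1/n}$ is exactly the unpacking the paper leaves to the reader.
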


The following example shows the importance
of considering admissible operators.

\begin{exa}\label{exa:admissibility}
The series $\Phi(x)=\Phi_{(q;q\la)}((1-q)x)$, studied in Proposition \ref{prop:smalldivisors},
is solution of the $q$-difference operator
$\cL=(\sgq-1)\circ[\la\sgq-((q-1)x+1)]$. This operator is already factored.
\par
Suppose that $\la\not\in q^{\Z_{<0}}$.
If the series $\Phi(x)$ is convergent, \ie if $\cL$ is admissible, the operator
$(\sgq-1)\circ\Phi(x)^{-1}$ is a right factor of $\cL$, as we could have deduced from
Lemma \ref{lemma:adams}. We conclude that if $\Phi(x)$ is not convergent the operator $\cL$ cannot
be factored ``starting with the exponents 1''.
\end{exa}

\subsection{A digression on formal factorization of $q$-difference operators}

If we drop the diophantine assumption of admissibility and consider
an operator $\cL\in\bC[[x]][\sgq]$, the notions of Newton polygon and exponent still make sense.
The following result is well known (\cf \cite{SoibelmanVologodsky}, \cite{sauloyfiltration})
and can be proved reasoning as in the previous section:

\begin{thm}\label{thm:adamsformale}
Suppose that the $q$-difference operator $\cL\in\bC[[x]][\sgq]$ has Newton polygon
$\{(\mu_1,r_1),\dots,(\mu_k,r_k)\}$, with integral slopes.
Then for any permutation $\varpi$ of the set $\{1,\dots,k\}$
there exists a factorization of $\cL$:
$$
\cL=\cL_{\varpi,1}\circ\cL_{\varpi,2}\circ\dots\circ\cL_{\varpi,k}\,,
$$
such that $\cL_{\varpi,i}\in\bC[[x]][\sgq]$ is
pure of slope $\mu_{\varpi(i)}$ and order $r_{\varpi(i)}$.
Any $\cL_{\varpi,i}$ admits a factorization of the form
$$
\cL_{\varpi,i}
=(x^{\mu_{\varpi(i)}}\sgq-\la_{r_{\varpi(i)}})h_{r_{\varpi(i)}}(x)\circ
(x^{\mu_{\varpi(i)}}\sgq-\la_{r-1})h_{r-1}(x)\circ
\cdots\circ (x^{\mu_{\varpi(i)}}\sgq-\la_1)h_1(x)\,,
$$
where:
\begin{itemize}
\item
$\Exp(\cL,\mu_{\varpi(i)})=(\la_1,\dots,\la_{r_{\varpi(i)}})$ are the exponents of the slope $\mu_{\varpi(i)}$,
ordered so that if $\frac{\la_i}{\la_j}\in q^{\Z_{>0}}$ then $i<j$;
\item
$h_1(x),\dots,h_{r_{\varpi(i)}}(x)\in 1+x\bC[[x]]$.
\end{itemize}
\end{thm}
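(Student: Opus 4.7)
The plan is to mirror the proof of Theorem \ref{thm:adams}: peel off first-order right factors one at a time by means of a formal analog of Lemma \ref{lemma:adams}. The diophantine condition of that lemma (its hypothesis 2) is invoked only to control the convergence of the auxiliary series and has no role in the formal setting; only the algebraic content carries over.

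The first step is to prove a formal version of Lemma \ref{lemma:adams}: if $\mu \in \Z$ is a slope of $NP(\cL)$ with exponents $(\la_1,\dots,\la_r)$ and $\la$ is an exponent of $\mu$ such that $q^n\la$ is not an exponent of the same slope for any $n > 0$, then there exist unique $h(x) \in 1 + x\bC[[x]]$ and $\wtilde\cL \in \bC[[x]][\sgq]$ with $\cL = \wtilde\cL \circ (x^\mu\sgq - \la)h(x)$. After reducing to the case $\mu = 0$, $\la = 1$ by the twists of Remark \ref{rmk:ManipSlopes}, one imposes $\cL y(x) = 0$ on $y(x) = \sum_{n \geq 0} y_n x^n$ with $y_0 = 1$. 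The same direct computation as in Lemma \ref{lemma:adams} yields the recurrence
$$
F_0(q^n)\, y_n = -\sum_{l=0}^{n-1} F_{n-l}(q^l)\, y_l, \qquad F_l(T) = \sum_{i=0}^\nu a_{i,l}T^i,
$$
and the hypothesis on $\la$ translates exactly to $F_0(q^n) \neq 0$ for $n \geq 1$. The recurrence is therefore unconditionally solvable in $\bC[[x]]$, with no size estimate needed. Setting $h(x) = y(x)^{-1}$ and dividing on the right produces $\wtilde\cL$; the computation of $NP(\wtilde\cL)$ in Lemma \ref{lemma:adams} is purely formal and carries over verbatim.

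The second step is to iterate this lemma to extract a full pure factor of one chosen slope, thereby obtaining the formal analog of Proposition \ref{prop:adams}. The exponents of $\mu$ split into disjoint $q^\Z$-orbits, and within each orbit the elements are totally ordered by the integer exponent of $q$. I would choose $\la_1, \dots, \la_r$ by picking, at each stage, a $q^\Z$-maximal element among the still-unused exponents; this guarantees the hypothesis of the formal lemma at every step (the remaining exponents are $q$-smaller within the orbit, so no $q^n\la$ with $n > 0$ can be an exponent) and it produces the ordering required by the statement, since $\la_i/\la_j \in q^{\Z_{>0}}$ forces $i < j$ by construction. After $r$ applications the pure part of slope $\mu$ has been completely factored.

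Finally, the theorem follows by iteration on the slopes: given a permutation $\varpi$, apply the formal Proposition \ref{prop:adams} to the slope $\mu_{\varpi(k)}$ to obtain a right factor $\cL_{\varpi,k}$, then repeat on the quotient $\wtilde\cL$, whose Newton polygon has one fewer slope. The main, and only mildly delicate, step is the second one: one must verify that a compatible ordering of the exponents of a single slope exists and supplies at every stage a suitable $\la$ for the formal lemma. Beyond that I expect no real obstacle; the small-divisor problem that dominates the analytic case is entirely absent because we are no longer asking for convergence.
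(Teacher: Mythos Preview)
Your proposal is correct and follows exactly the route the paper indicates: the paper does not give an explicit proof of Theorem \ref{thm:adamsformale} but states that it ``can be proved reasoning as in the previous section'', i.e.\ by rerunning Lemma \ref{lemma:adams} and Proposition \ref{prop:adams} formally with the diophantine/convergence hypotheses dropped. Your identification of hypothesis~2 of Lemma \ref{lemma:adams} as the only analytic (non-formal) ingredient, and your handling of the exponent ordering via $q^\Z$-maximality, are precisely what is needed.
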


\section{Analytic classification of $q$-difference modules}

Let $\bK=\bC(\{x\})$ be the field of germs of meromorphic function at $0$,
\ie the field of fractions of $\bC\{x\}$.
In the following we will denote by $\what \bK=\bC((x))$ the field of Laurent series,
and by $\bK_n=\bK(x^{1/n})$ (resp. $\what \bK_n=\what\bK(x^{1/n})$)
the finite extension of $\bf K$ (resp. $\what \bK$) or degree $n$,
with its natural $q^{1/n}$-difference structure. \emph{We remind that
we are assuming all over the paper that the series
$\sum_{n\geq 0}\frac{x^n}{(q;q)_n}$ is convergent}.

\subsection{Generalities on $q$-difference modules}

We recall some generalities on $q$-difference modules
(for a more detailed exposition \cf for instance \cite[Part I]{DVInv},
\cite{sauloyfiltration} or \cite{gazette}).

\medskip
Let $F$ be a $q$-difference field over $\bC$, \ie a field $F/\bC$ of functions with
an action of $\sgq$.

\begin{defn}
A \emph{$q$-difference modules $\cM=(M,\Sgq)$ over $F$ (of rank $\nu$)}
is a finite $F$-vector space
$M$, of dimension $\nu$, equipped with a $\sgq$-linear bijective
endomorphism $\Sgq$, \ie with a $\bC$-linear isomorphism such that
$\Sgq(fm)=\sgq(f)\Sgq(m)$, for any $f\in F$ and any $m\in M$.
\par
A \emph{morphism of $q$-difference modules} $\varphi:(M,\Sgq^M)\to (N,\Sgq^N)$ is a
$\bC$-linear morphism $M\to N$, commuting to the action of $\Sgq^M$ and $\Sgq^N$,
\ie $\Sgq^N\circ\varphi=\varphi\circ\Sgq^M$.
\end{defn}

If $G$ is a $q$-difference field extending $F$ (\ie $G/F$ and the action of $\sgq$ on
$G$ extends the one on $F$),
the module $\cM_G=(M\otimes_F G,\Sgq\otimes\sgq)$ is
naturally a $q$-difference module over $G$.
\par
If $F_n$, $n\in\Z_{>1}$, is a $q^{1/n}$-difference field containing $F$ and such that
${\sg_{q^{1/n}}}_{\vert F}=\sgq$ (for instance, think of $\bK$ and $\bK_n$),
to any $q$-difference modules $\cM=(M,\Sgq)$ over $F$
we can associate the $q^{1/n}$-difference module
$\cM_{F_n}=(M\otimes_F F_n,\Sgq\otimes\sg_{q^{1/n}})$ over $F_n$.
\par
For other algebraic constructions (tensor product, internal $Hom$,...) we refer
to \cite{DVInv} or \cite{sauloyfiltration}.

\begin{rmk}[The cyclic vector lemma]~\\
The cyclic vector lemma says that a $q$-difference module $\cM$ over $F$, of rank $\nu$,
contains a cyclic element $m\in M$, \ie an element such that
$m,\Sgq m,\dots,\Sgq^{\nu-1}m$ is an $F$-basis of $M$. This is equivalent to say
that there exists a
$q$-difference operator $\cL\in F[\sgq]$ of order $\nu$ such that
we have an isomorphism of $q$-difference modules
$$
M\cong \frac{F[\sgq,\sgq^{-1}]}{F[\sgq,\sgq^{-1}]\cL}\,.
$$
We will call $\cL$ a $q$-difference operator associated to $\cM$, and
$\cM$ the $q$-difference module associated to $\cL$.
\end{rmk}

\begin{exa}
\label{exa:rank1}
(Rank 1 $q$-difference modules\footnote{For more details on the
rank one case \cf\cite[Prop. 3.6]{SoibelmanVologodsky}, where the Picard group of $q$-difference
modules modules over $\cO(\C^\ast)$, satisfying a convenient diophantine assumption, is studied.})\\
Let $\mu\in\Z$, $\la\in\bC^\ast$ and $h(x)\in\bK$ (resp. $h(x)\in\what\bK$).
Let us consider the rank 1 $q$-difference module
$\cM_{\mu,\la}=(M_{\mu,\la},\Sgq)$ over $\bK$ (resp. $\what\bK$)
associated to the operator $(x^\mu\sgq-\la)\circ h(x)=h(qx)x^\mu\sgq-h(x)\la$.
This means that there exists a basis $f$ of $M_{\mu,\la}$ such that
$\Sgq f=\frac{h(x)}{h(qx)}\frac \la{x^\mu}f$.
If one consider the basis $e=h(x)f$, then $\Sgq e=\frac \la{x^\mu}e$.
\par
A straightforward calculation shows that
$\cM_{\mu,\la}$ is isomorphic, as a
$q$-difference module, to $\cM_{\mu^\p,\la^\p}$ if and only if $\mu=\mu^\p$ and
$\frac \la{\la^\p}\in q^\Z$.
Moreover, we have proved in the previous section that a $q$-difference operator
$\sgq-a(x)$, with $a(x)\in\what\bK$ can be always be written
in the form $\sgq-\frac\la{x^\mu}\frac{h(x)}{h(qx)}$,
for some $h(x)\in\what\bK$. We also know that,
if $q$ is such that $\sum_{n\geq 0}\frac{x^n}{(q;q)_n}$ converges and if $a(x)\in\bK$,
then $h(x)$ is a convergent series.
\end{exa}

The remark and the example above, together with the results of the previous section,
imply that we can attach to a $q$-difference modules a \emph{Newton polygon} by choosing a
cyclic vector and that the Newton polygon of a $q$-difference modules is well-defined
(\cf \cite{sauloyfiltration}). Moreover the classes modulo $q^\Z$ of the exponents of each slope
are independent of the choice of the cyclic vector
(\cf \cite[Thm. 3.12 and 3.14]{SoibelmanVologodsky} and \cite{sauloyfiltration}).
Both the Newton polygon and the classes modulo $q^\Z$ of the exponents are an invariant
of the formal isomorphism class.

\subsection{Main result}

Let us call $\cB_q$ (resp. $\hat\cB_q$)
the category of $q$-difference module over $\bK$
(resp. $\what\bK$).
We will use the adjective analytic (resp. formal) to refer to objects,
morphisms, isomorphism classes, etc etc of $\cB_q$ (resp. $\hat\cB_q$).
\par
We are concerned with the problem of
finding the largest full subcategory $\cB_q^{iso}$
of $\cB_q$ defined by the following property:
\begin{quote}
\emph{An object $\cM$ of $\cB_q$ belongs to $\cB_q^{iso}$ if any object $\cN$ in $\cB_q$
such that $\cN_{\what\bK}$ is isomorphic to $\cM_{\what\bK}$ in $\what\cB_q$
is already isomorphic to $\cM$ in $\cB_q$.}
\end{quote}
This means that restriction of the functor
\beq\label{eq:functor}
\begin{array}{rccc}
-\otimes_\bK\what\bK:
&\cB_q&\longrightarrow&\what\cB_q\\
&\cM&\longmapsto&\cM_{\what\bK}
\end{array}
\eeq
to $\cB_q^{iso}$ is an equivalence of category between $\cB_q^{iso}$ and its image.
We will come back in \S\ref{sec:Soibelman} to the characterization of
$\cB_q^{iso}\otimes_\bK\what\bK$ inside $\what\cB_q$.
A counterexample of the fact that
the functor $-\otimes_\bK\what\bK$ is not an equivalence of category in general
is considered in \S\ref{subsec:example}.
\par
The category $\cB_q^{iso}$ is link to the notion of admissibility introduced in the
previous section:

\begin{defn}
We say that a $q$-difference module $\cM$ over $\bK$ is
\emph{admissible (resp. almost admissible; resp. pure (of slope $\mu$))}
if there exists an operator $\cL\in\bC\{x\}[\sgq]$ such that $M\cong \bK[\sgq]/(\cL)$
and that $\cL$
is admissible (resp. almost admissible; resp. pure (of slope $\mu$)).
\end{defn}

\begin{rmk}
The considerations in the previous section imply that the notion of (almost) admissible
$q$-difference module is well defined and invariant up to isomorphism.
\end{rmk}

Our main result is:

\begin{thm}\label{thm:mainthm}
The category $\cB_q^{iso}$ is the full subcategory of $\cB_q$ whose objects are
almost admissible $q$-difference modules.
\end{thm}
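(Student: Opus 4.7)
I would prove the two inclusions of Theorem~\ref{thm:mainthm} separately, reducing in both cases, via the cyclic vector lemma and the factorization results of Section~\ref{sec:factorisation}, to a small divisor analysis in the fuchsian (slope~$0$) case. For the inclusion \emph{almost admissible $\subseteq \cB_q^{iso}$}, let $\cM$ be almost admissible and let $\cN\in\cB_q$ satisfy $\cN_{\what\bK}\cong\cM_{\what\bK}$. Since the Newton polygon and the classes of the exponents modulo $q^\Z$ are formal invariants, $\cN$ is also almost admissible. Applying Theorem~\ref{thm:adams} (or Corollary~\ref{cor:adamsKn} after ramification) with two opposite orderings of the slopes and comparing the two resulting factorizations, I would extract an analytic slope-decomposition $\cM=\bigoplus_{i=1}^k\cM^{(\mu_i)}$ with $\cM^{(\mu_i)}$ pure of slope $\mu_i$, and similarly for $\cN$. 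A formal isomorphism respects this decomposition, so the problem reduces to pure modules of a single slope $\mu$. After scalar extension to $\bK_n$ with $n\mu\in\Z$ and twist by the rank~$1$ module of slope $\mu$ of Example~\ref{exa:rank1} (see Remark~\ref{rmk:ManipSlopes}), one further reduces to the fuchsian case; a standard Galois descent from $\bK_n$ to $\bK$ then reassembles the analytic isomorphism downstairs.

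\textbf{The fuchsian case (core of the argument).} Given two admissible fuchsian modules $\cM,\cN$ with the same exponents $\la_1,\dots,\la_r$ modulo $q^\Z$, a formal isomorphism is encoded by a matrix $G\in Gl_r(\what\bK)$ solving $\sgq(G)\,A_\cM = A_\cN\,G$. Expanding $G=\sum_{n\ge 0} G_n x^n$ and organizing the equation along the generalized eigenspaces of the exponents, each $G_n$ is determined from its predecessors after division by a product of small divisors of the form $1-q^n\la_i\la_j^{-1}$. A majorant argument in the spirit of the proof of Lemma~\ref{lemma:adams}, combined with the convergence of $\phi_{(q;\ul\La)}$ for $\ul\La=\{\la_i\la_j^{-1}\}$ provided by Corollary~\ref{cor:smalldivisors}, then yields $G\in Gl_r(\bK)$. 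The principal obstacle is precisely this estimate: without the admissibility hypothesis the small divisors accumulate and $G$ only exists as a formal power series.

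\textbf{Converse.} If $\cM\in\cB_q^{iso}$ were not almost admissible, I would construct a counterexample modelled on Example~\ref{exa:admissibility}. After fixing a slope $\mu$ whose exponents violate the admissibility condition and reducing to the fuchsian case as above, I would isolate a rank~$2$ sub-quotient $\cM_0$ whose formal splitting is witnessed by a divergent series analogous to $\Phi_{(q;q\la)}((1-q)x)$. Let $\cN_0$ be the direct sum of the two rank~$1$ modules predicted by the formal exponent decomposition, both of which belong to $\cB_q$ by Example~\ref{exa:rank1}. Then $(\cN_0)_{\what\bK}\cong(\cM_0)_{\what\bK}$, whereas an analytic isomorphism $\cN_0\cong\cM_0$ in $\cB_q$ would force, by the uniqueness part of Proposition~\ref{prop:adams}, an analytic factorization in which the divergent $\Phi$ appears as a convergent factor---a contradiction. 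Assembling $\cN_0$ with the remaining pieces of $\cM$ produces an $\cN\in\cB_q$ formally but not analytically isomorphic to $\cM$, contradicting $\cM\in\cB_q^{iso}$.
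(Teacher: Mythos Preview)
Your overall strategy matches the paper's: both directions are reduced, via slope decomposition and twist, to a rank~$2$ fuchsian situation governed by Example~\ref{exa:admissibility}. For the inclusion \emph{almost admissible $\subseteq \cB_q^{iso}$}, your direct attack on the gauge equation $\sgq(G)A_\cM=A_\cN G$ with a majorant-series estimate is a legitimate alternative to the paper's route, which instead proves a full structure theorem (Theorem~\ref{thm:M-adams}: every almost admissible module is a direct sum of indecomposables of the form $\cN_{\mu/n,\la}\otimes\mathcal U_\nu$) and then observes that these indecomposable summands are determined by the formal invariants, so that a formal isomorphism forces an analytic one (Proposition~\ref{prop:formalVSanalytic}, relying on Corollary~\ref{cor:adams}). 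The paper's approach gives more structural information; yours is shorter but hides the classification of indecomposables.

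There is, however, a genuine gap in your converse argument. You write ``reducing to the fuchsian case as above'', but the reduction you used in the forward direction rested on the \emph{analytic} slope decomposition coming from Theorem~\ref{thm:adams}, which requires admissibility---precisely what you are assuming fails. The paper's fix (Step~3 of its Lemma following Proposition~\ref{prop:formalVSanalytic}) is to use the hypothesis $\cM\in\cB_q^{iso}$ itself: the \emph{formal} slope decomposition of $\cM_{\what\bK}$ always exists and is realized over $\bK$ by a module $\cN$ with block-diagonal constant matrices; since $\cN_{\what\bK}\cong\cM_{\what\bK}$, the hypothesis forces $\cM\cong\cN$ analytically, and one may replace $\cM$ by $\cN$. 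Only after this replacement is your rank~$2$ piece an honest \emph{direct summand} (not merely a sub-quotient), so that ``assembling $\cN_0$ with the remaining pieces'' makes sense and the final contradiction---that $\cN_0\oplus(\text{rest})\cong\cM_0\oplus(\text{rest})$ analytically yet $\cN_0\not\cong\cM_0$---can be drawn, either via Krull--Schmidt in $\cB_q$ or by reading off the relevant block of the gauge matrix directly.
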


We introduce some notation that will be useful in the proof of Theorem \ref{thm:mainthm}.
We will denote \qdiffKa\ (resp. \qdiffKaa)
the category of admissible
(resp. almost admissible) $q$-difference modules over $\bK$, whose objects are
the admissible
(resp. almost admissible) $q$-difference
modules over $\bK$ and whose morphisms are the morphisms of
$q$-difference modules over $\bK$.

\begin{rmk}
We know that $\cB_q$ and $\what\cB_q$ are abelian categories. Therefore, $\ker$ and $\hbox{coker}$
of morphisms in \qdiffKa\ (resp. \qdiffKaa) are $q$-difference modules over $\bK$.
To prove that they are objects of \qdiffKa\ (resp. \qdiffKaa) we have only to
point out that the operator associated to a sub-$q$-difference module
(resp. a quotient module) is a right (resp. left) factor
of a convenient operator associated to the module itself, in fact the slopes and
the classes modulo $q^\Z$ of the exponents associated to each slope are invariants of
$q$-difference modules.
\end{rmk}

The proof of Theorem \ref{thm:mainthm} consists in proving that $\cB_q^{iso}=$\qdiffKaa\
and is articulated in the following steps:
first of all we will make a list of simple and indecomposable objects of
\qdiffKaa; then we prove a structure theorem for almost admissible $q$-difference modules.
We deduce that the formal isomorphism class of an object of $\cB_q$ correspond to
more than one analytic isomorphism class if and only if the slopes of the
Newton polygon are not admissible: this means that $\cB_q^{iso}$ and \qdiffKaa\ coincide.

\subsection{A crucial example}
\label{subsec:example}

Consider the $q$-difference operator (\cf Example \ref{exa:admissibility})
$$
\cL=(\sgq-1)\circ[\la\sgq-((q-1)x+1)]
$$
and its associated
$q$-difference module $\cM=\l(M=\frac{\bK[\sgq,\sgq^{-1}]}{\bK[\sgq,\sgq^{-1}]\cL},\Sgq\r)$.
If $\la\in q^\Z$ the module is admissible and there is nothing more to prove.
So let us suppose that $\la\not\in q^\Z$.
\par
In $\what\cB_q$, the $q$-difference module $\what\cM=\cM_{\what\bK}$
is isomorphic to the rank 2 module
$\what\bK^2$ equipped with the semi-linear operator:
$$
\begin{array}{rccc}
\Sgq:&\what\bK^2&\longrightarrow&\what\bK^2\\
&\begin{pmatrix}f_1(x)\\f_2(x)\end{pmatrix}&\longmapsto &
    \begin{pmatrix}1&0\\0&\la^{-1}\end{pmatrix}
    \begin{pmatrix}f_1(qx)\\f_2(qx)\end{pmatrix}
\end{array}\,.
$$
In fact, $\cL$ has a right factor $\la\sgq-((q-1)x+1)$: this corresponds
to the existence of an element $f\in M$ such that
$\Sgq f=\la^{-1}\l((q-1)x+1\r) f$.
Since
$e_q(x)=\sum_{n\geq 0}\frac{(1-q)^nx^n}{(q;q)_n}$ is solution of the
equation $y(qx)=\l((q-1)x+1\r)y(x)$, we deduce that $\wtilde f=e_q(x)f$ verifies
$\Sgq\wtilde f=\la^{-1}\wtilde f$.
On the other hand, we have seen that there always exists $\Phi\in\bC[[x]]$ such that
$\l(\sgq-1\r)\Phi$ is a right factor of $\cL$: this means that there exists
$e\in M$ such that $\Sgq e=\Phi(x)\Phi(qx)^{-1}e$ and therefore that there exists
$\wtilde e\in M_{\what\bK}$ such that $\Sgq\wtilde e=\wtilde e$. A priori this
last base change is only formal: the series $\Phi$ converges if and only if the
module is admissible; \cf Example \ref{exa:admissibility}.
\par
The calculations above say more: the formal isomorphism class
of $M$ corresponds to a single analytic isomorphism class if and only if $M$ is admissible,
which happens if and only if the series $\sum_{n\geq 0}\frac{x^n}{(q;q)_n}$
and $\sum_{n\geq 0, q^n\neq\la}\frac{x^n}{q^n-\la}$ converge.

\subsection{Simple and indecomposable objects}
\label{subsec:simpleobjects}

In differential and difference equation theory, simple objects are called \emph{irreducible}.
They are those objects $\cM=(M,\Sgq)$ over $\bK$ such that any $m\in M$
is a cyclic vector: this is
equivalent to the property of not having a proper $q$-difference sub-module, or to the
fact that any $q$-difference operator associated to $\cM$ cannot be factorized in $\bK[\sgq]$.

\begin{cor}
The only irreducible objects in the category \qdiffKa are
the rank one modules described in Example \ref{exa:rank1}.
\end{cor}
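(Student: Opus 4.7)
The plan is to reduce the statement to the analytic factorization theorem of the previous section, using the cyclic vector lemma to translate between $q$-difference modules and $q$-difference operators. The argument splits into the two obvious directions.

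For the easy direction, I would first observe that every rank-1 module $\cM_{\mu,\la}$ of Example \ref{exa:rank1} is automatically irreducible: $M_{\mu,\la}$ has dimension $1$ over $\bK$, so it admits no nonzero proper $\bK$-subspace, and a fortiori no proper sub-$q$-difference module. Admissibility is also automatic by the remark following Definition \ref{defn:admissible}, thanks to the standing assumption that $\sum_{n\geq 0}\frac{x^n}{(q;q)_n}$ converges. Moreover, every rank-1 admissible module is of this form: given a basis $e$ with $\Sgq e=a(x)e$, $a\in\bK^\ast$, the last paragraph of Example \ref{exa:rank1} provides a factorization $a(x)=\la x^{-\mu}\,h(x)/h(qx)$ with $\la\in\bC^\ast$, $\mu\in\Z$ and $h\in 1+x\bC\{x\}$, so $\cM\cong\cM_{\mu,\la}$.

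For the non-trivial direction, let $\cM$ be an irreducible object of \qdiffKa\ of rank $\nu$; I would show $\nu=1$. Admissibility of $\cM$ together with the cyclic vector lemma allow me to pick an admissible operator $\cL\in\bC\{x\}[\sgq]$ of order $\nu$ with $\cM\cong\bK[\sgq]/\bK[\sgq]\cL$. Arguing by contradiction, assume $\nu\geq 2$. Then Proposition \ref{prop:adams} (applied to any slope of $\cL$, each of which is admissible by assumption, and refined through Lemma \ref{lemma:adams}) produces a factorization $\cL=\wtilde\cL\circ\cL_1$ inside $\bC\{x\}[\sgq]\subset\bK[\sgq]$, with $\cL_1=(x^\mu\sgq-\la)h(x)$ of order $1$ and $\wtilde\cL$ of order $\nu-1\geq 1$. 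The strict inclusion of left ideals $\bK[\sgq]\cL\subsetneq\bK[\sgq]\cL_1\subsetneq\bK[\sgq]$ produces a quotient map $\bK[\sgq]/\bK[\sgq]\cL\twoheadrightarrow\bK[\sgq]/\bK[\sgq]\cL_1$ whose kernel is a proper nonzero sub-$q$-difference module of $\cM$, contradicting irreducibility.

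All the substance is already contained in the factorization results of Section \ref{sec:factorisation}; the only remaining task is to translate those factorizations into the module picture through the cyclic vector lemma, so I do not foresee any genuine obstacle beyond this routine bookkeeping.
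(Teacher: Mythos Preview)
Your proposal is correct and follows exactly the route the paper intends: the paper's own proof is the single line ``It is a consequence of Proposition \ref{prop:adams},'' and you have simply unpacked that consequence via the cyclic vector lemma and the standard dictionary between right factorizations of $\cL$ and sub/quotient modules of $\bK[\sgq]/\bK[\sgq]\cL$. No substantive difference in approach.
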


\begin{proof}
It is a consequence of Proposition \ref{prop:adams}.
\end{proof}

Before describing the irreducible object of the category
\qdiffKaa, we need to introduce a functor of
restriction of scalars going from \qdiffKna\ to
\qdiffKaa.
In fact, the set $\{1,x^{1/n},\dots,x^{n-1/n}\}$ is a basis of $\bK_n/\bK$
such that $\sgq x^{i/n}=q^{i/n}x^{i/n}$.
Therefore $\bK_n$ can be identified to the admissible $q$-difference module
$M_{0,1}\oplus M_{0,q^{1/n}}\oplus\dots\oplus M_{0,q^{n-1/n}}$
(in the notation of Example \ref{exa:rank1}).
\par
In the same way, we can associate to any (almost) admissible $q^{1/n}$-difference
module $\cM$ of rank $\nu$ over $\bK_n$ an almost admissible difference module $Res_n(\cM)$
of rank $n\nu$ over $\bK$ by restriction of scalars.
The functor $Res_n$ ``stretches'' the Newton polygon horizontally, meaning that
if the Newton polygon of $\cM$ over $\bK_n$ is $\{(\mu_1,r_1),\dots,(\mu_k,r_k)\}$, then the
Newton polygon of $Res_n(\cM)$ over $\bK$ is $\{(\mu_1/n,nr_1),\dots,(\mu_k/n,nr_k)\}$

\begin{exa}
Consider the $q^{1/2}$-module over $\bK_2$ associated to the equation
$x^{1/2}y(qx)=\la y(x)$, for some $\la\in\bC^\ast$. This means that we consider a
rank 1 module $\bK_2 e$ over $\bK_2$, such that
$\Sgq e=\frac\la{x^{1/2}}e$. Notice that its Newton polygon over $\bK_2$ has only one
single slope equal to 1.
Since $\bK_2 e=\bK e+\bK x^{1/2}e$, the module
$\bK_2e$ is a $q$-difference module of rank 2 over $\bK$, whose $q$-difference
structure is defined by:
$$
\Sgq(e,x^{1/2}e)=(e,x^{1/2}e)
\begin{pmatrix}
0 & q^{1/2}\la\\
\la/x & 0
\end{pmatrix}\,.
$$
Consider the vector $m=e+x^{1/2}e$. We have:
$\Sgq(m)=q^{1/2}\la e+\frac\la x(x^{1/2}e)$ and
$\Sgq^2(m)=\frac{q^{1/2}\la^2}{qx}e+\frac{q^{1/2}\la^2}{x}(x^{1/2}e)$.
Since $m$ and $\Sgq(m)$ are linearly
independent, $m$ is a cyclic vector for $\bK_2 e$ over $\bK$.
Moreover, for
$$
\l\{\begin{array}{l}
P(x)=-\la^2(q^{3/2}x-1)\\
Q(x)=\la(q-1)x\\
R(x)=-q^{1/2}x(q^{1/2}x-1)
\end{array}\r.
$$
we have $P(x)m+Q(x)\Sgq(m)=R(x)\Sgq^2(m)$.
In other words,
the Newton polygon of the rank 2
$q$-difference module $\bK_2e$ over $\bK$ has only one slope equal to 1/2.
\end{exa}

Let $n\in\Z_{>0}$, $\mu$ be an integer prime to $n$ and
$\cM_{\mu,\la,n}$ be the rank one module over $\bK_n$ associated to the equation
$x^{\mu/n}y(qx)=\la y(x)$.
In \cite[Lemma 3.9]{SoibelmanVologodsky}, Soibelman and Vologodsky
show that $\cN_{\mu/n,\la}=Res_n(\cM_{\mu,\la,n})$ is a simple object over $\cO(\C^\ast)$.
We show that all the simple objects of
the category \qdiffKaa\ are of this form
(for the case $|q|\neq 1$, \cf \cite{vdPutReversatToulouse}).
Remark that $\cM_{\mu,\la}=\cM_{\mu,\la,1}=\cN_{\mu,\la}$ as
$q$-difference modules over $\bK$.
\par
Let us start by proving the lemma:

\begin{lemma}\label{lemma:fattorizzazioneirriducile}
Let $\cM$ be a $q$-difference module associated to a
$q$-difference operator $\cL\in\bC\{x\}[\sgq]$. Suppose that the
operator $\cL$ has a right factor in $\bC\{x^{1/n}\}[\sgq]$ of the form
$(x^{\mu/n}\sgq-\la)\circ h(x^{1/n})$, with $n\in\Z_{>1}$,
$\mu\in\Z$, $(n,\mu)=1$, $\la\in\bC^\ast$ and $h(x)\in\bC\{x^{1/n}\}$.
\par
Then $\cM$ has a submodule isomorphic to $\cN_{\mu/n,\la}$.
\end{lemma}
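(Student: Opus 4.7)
The plan is to work over the ramified extension $\bK\subset\bK_n$, produce a rank-$1$ submodule of $\cM_{\bK_n}$ isomorphic to $\cM_{\mu,\la,n}$, take its Galois orbit, and descend to a rank-$n$ submodule of $\cM$ which I will identify with $\cN_{\mu/n,\la}$.

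The key preparatory step is to replace the given \emph{right} factor by a \emph{left} one carrying the same (slope, exponent) data. Because the lemma sits in the almost admissible setting \qdiffKaa, Corollary \ref{cor:adamsKn} makes a full slope-by-slope analytic factorization of $\cL$ available in $\bC\{x^{1/n}\}[\sgq]$, and the given right factor certifies that $\mu/n$ is a slope of $\cL$ with $\la$ among its exponents. Theorem \ref{thm:adams} then lets me place the slope-$\mu/n$ block on the left, and within this block Proposition \ref{prop:adams} together with Lemma \ref{lemma:adams} orders the rank-$1$ factors so that the leftmost one has an exponent which is $q^\Z$-extremal among the exponents of the slope. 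After replacing $\la$ by $q^k\la$ for an appropriate $k\in\Z$, which costs nothing since $\cM_{\mu,\la,n}\cong\cM_{\mu,q^k\la,n}$ via the basis change $e\mapsto x^k e$, I may assume $\la$ sits at that extremal position and so produce a factorization
\[
\cL=(x^{\mu/n}\sgq-\la)\,h'(x^{1/n})\circ\cL''
\]
in $\bC\{x^{1/n}\}[\sgq]$ with $h'(x^{1/n})\in 1+x^{1/n}\bC\{x^{1/n}\}$.

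The standard operator-module dictionary reads off from this left factor a short exact sequence $0\to N_0\to\cM_{\bK_n}\to\cM''\to 0$ of $q^{1/n}$-difference modules over $\bK_n$, with $N_0\cong\cM_{\mu,\la,n}$. The Galois group $G=\mathrm{Gal}(\bK_n/\bK)=\langle\tau\rangle$, $\tau(x^{1/n})=\zeta x^{1/n}$ with $\zeta=e^{2i\pi/n}$, commutes with $\sgq$ and acts on $\cM_{\bK_n}$; it produces conjugate submodules $\tau^j(N_0)\cong\cM_{\mu,\zeta^{-j\mu}\la,n}$ for $j=0,\dots,n-1$. These are pairwise non-isomorphic: any identity $\zeta^{(i-j)\mu}\in q^\Z$ forces $\zeta^{(i-j)\mu}=1$ because $q$ is not a root of unity, and then $(n,\mu)=1$ gives $i\equiv j\pmod n$. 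Hence their sum inside $\cM_{\bK_n}$ is direct.

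The $G$-stable submodule $\bigoplus_{j=0}^{n-1}\tau^j(N_0)\subset\cM_{\bK_n}$ descends by faithfully flat Galois descent to a submodule $N\subset\cM$ with $N\otimes_\bK\bK_n\cong\bigoplus_j\tau^j(N_0)$. The general formula $\cN_{\mu/n,\la}\otimes_\bK\bK_n=\mathrm{Res}_n(\cM_{\mu,\la,n})\otimes_\bK\bK_n\cong\bigoplus_j\tau^j(\cM_{\mu,\la,n})$ for the scalar extension of a restriction of scalars along a Galois extension matches $N\otimes_\bK\bK_n$ with $\cN_{\mu/n,\la}\otimes_\bK\bK_n$, and descent then yields $N\cong\cN_{\mu/n,\la}$. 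The main obstacle is the first step: the conversion of the right factor into a left factor with a prescribed exponent rests on the combined use of Theorem \ref{thm:adams}, Proposition \ref{prop:adams} and Lemma \ref{lemma:adams} in its left-handed variant, and on almost admissibility to ensure that $h'(x^{1/n})$ stays analytic; once this is achieved, the Galois-theoretic steps are routine.
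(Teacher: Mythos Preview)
Your argument is correct in outline but takes a much heavier route than the paper's, and it leans on a hypothesis the lemma does not assume. The paper never leaves $\bK$ and never invokes the analytic factorization theorems or Galois descent: it simply picks an order-$n$ operator $\cL_{\mu/n,\la}\in\bC\{x\}[\sgq]$ presenting $\cN_{\mu/n,\la}$, performs the Euclidean division $\cL=\cQ\circ\cL_{\mu/n,\la}+\cR$ in $\bC\{x\}[\sgq]$, and observes that the remainder $\cR\in\bC\{x\}[\sgq]$ still carries the right factor $(x^{\mu/n}\sgq-\la)h(x^{1/n})$ over $\bK_n$; if $\cR\neq 0$ it would then be an operator over $\bK$ of order $<n$ admitting the fractional slope $\mu/n$, which is impossible. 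Hence $\cR=0$, $\cL_{\mu/n,\la}$ right-divides $\cL$ over $\bK$, and the submodule appears directly. No admissibility is needed, and the same proof works verbatim in the formal setting (as the paper remarks just after the lemma). Your proof, by contrast, genuinely requires almost admissibility to run the machinery of \S\ref{sec:factorisation} and to guarantee convergence of your $h'$, and it appeals to an unstated left-handed variant of Lemma~\ref{lemma:adams} (true by the usual adjoint trick, but an extra step). What your approach buys is a clean Galois-theoretic picture of why the submodule has rank exactly $n$ --- it is the descent of the full Galois orbit of a line; what the paper's approach buys is a two-line argument that applies even outside the almost admissible class.
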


\begin{proof}
First of all remark that any operator $\cL\in\bC\{x\}[\sgq]$
divisible by $(x^{\mu/n}\sgq-\la)\circ h(x^{1/n})$ has order $\geq n$.
\par
Let $\cL_{\mu/n,\la}\in\bC\{x\}[\sgq]$ be a $q$-difference operator (of order $n$)
associated to $\cN_{\mu/n,\la}$.
Since the ring $\bC\{x\}[\sgq]$ is euclidean the exist $\cQ,\cR\in\bC\{x\}[\sgq]$,
such that
$$
\cL=\cQ\circ\cL_{\mu/n,\la}+\cR\,,
$$
with $\cR=0$ or $\cR$ of order strictly smaller than $n$ and divisible on the right by
$(x^{\mu/n}\sgq-\la)\circ h(x^{1/n})$. Of course, if $\cR\neq 0$, we obtain a contradiction.
Therefore $\cL_{\mu/n,\la}$ divides $\cL$ and the lemma follows.
\end{proof}

\begin{rmk}
The same statement holds for formal operator $\cL\in\bC[[x]][\sgq]$, having a formal right
factor $(x^{\mu/n}\sgq-\la)\circ h(x^{1/n})$, with $h(x)\in\bC[[x^{1/n}]]$.
\end{rmk}

Finally we have a complete description of the isomorphism classes
of almost admissible irreducible $q$-difference modules over $\bK$:

\begin{prop}
A system of representatives of the isomorphism classes of the irreducible
objects of \qdiffKaa\ (resp. $\what\cB_q$) is given by the reunion of the following sets:\\
- rank 1 $q$-difference modules $\cM_{\mu,\la}$, with $\mu\in\Z$ and
$c\in\bC^\ast/q^\Z$, \ie the irreducible objects of \qdiffKa\
up to isomorphism (\cf Example \ref{exa:rank1});\\
- the $q$-difference modules $\cN_{\mu/n,\la}=Res_n(\cM_{\mu,\la,n})$, where
$n\in\Z_{>0}$, $\mu\in\Z$, $(n,\mu)=1$, and $\la\in\bC^\ast/(q^{1/n})^\Z$.
\end{prop}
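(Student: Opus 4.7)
The plan is to combine the analytic factorization theorem (Corollary \ref{cor:adamsKn}) with the descent statement of Lemma \ref{lemma:fattorizzazioneirriducile}. The argument has three parts: the listed modules are simple; every simple object of \qdiffKaa\ appears on the list; distinct representatives are non-isomorphic. The formal case then runs verbatim with Theorem \ref{thm:adamsformale} in place of Theorem \ref{thm:adams}.

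First I would verify the simplicity of $\cN_{\mu/n,\la}$. The rank one case $n=1$ is immediate. For $n>1$ with $(n,\mu)=1$, the argument is Galois-theoretic descent along $\bK_n/\bK$: the Galois group is cyclic, generated by $\sigma:x^{1/n}\mapsto \zeta_n x^{1/n}$ for a primitive $n$-th root of unity $\zeta_n$, and a direct calculation with the basis $e,x^{1/n}e,\dots,x^{(n-1)/n}e$ of $\cN_{\mu/n,\la}$ yields a decomposition
\[
\cN_{\mu/n,\la}\otimes_\bK\bK_n \;\cong\;\bigoplus_{i=0}^{n-1}\cM_{\mu,\,\zeta_n^{i}\la,\,n}
\]
(where I use $(n,\mu)=1$ to reindex the conjugates). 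Since $q$ is not a root of unity, the $n$ scalars $\zeta_n^{i}\la$ lie in pairwise distinct classes modulo $(q^{1/n})^\Z$, so by Example \ref{exa:rank1} the summands are pairwise non-isomorphic and the decomposition is Krull--Schmidt unique. Any proper $\bK$-submodule of $\cN_{\mu/n,\la}$ would extend to a $\sigma$-invariant $\bK_n$-submodule, necessarily a direct sum of a proper non-empty subset of the summands; but $\sigma$ acts as a cyclic permutation of the index set (again by $(n,\mu)=1$), so no such proper subset exists and $\cN_{\mu/n,\la}$ is irreducible.

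Next, let $\cM$ be an irreducible object of \qdiffKaa\ and pick a $q$-difference operator $\cL\in\bC\{x\}[\sgq]$ associated to it. By Corollary \ref{cor:adamsKn}, $\cL$ admits an analytic right factor in $\bC\{x^{1/n}\}[\sgq]$ of the form $(x^{\mu/n}\sgq-\la)h(x^{1/n})$ for some $n\in\Z_{>0}$, $\mu\in\Z$, $\la\in\bC^\ast$; taking $n$ minimal we may assume $(n,\mu)=1$. Lemma \ref{lemma:fattorizzazioneirriducile} then produces a $q$-difference submodule of $\cM$ isomorphic to $\cN_{\mu/n,\la}$, and irreducibility of $\cM$ forces $\cM\cong\cN_{\mu/n,\la}$.

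Since the Newton polygon is an isomorphism invariant (\S\ref{subsec:NP}), an isomorphism $\cN_{\mu/n,\la}\cong\cN_{\mu'/n',\la'}$ forces $\mu/n=\mu'/n'$, hence $(\mu,n)=(\mu',n')$; the admissible class of $\la$ is then pinned down by the $\bK_n$-decomposition computed above, giving the non-isomorphism part. The formal statement for $\what\cB_q$ follows by repeating the whole argument with Theorem \ref{thm:adamsformale} in place of Theorem \ref{thm:adams} (and the formal version of Lemma \ref{lemma:fattorizzazioneirriducile} noted in the remark following it). The main obstacle is the descent-irreducibility step: the hypothesis $(n,\mu)=1$ is used twice in an essential way, first to obtain pairwise non-isomorphic summands over $\bK_n$ so that Krull--Schmidt applies, and second to make the Galois action on the index set transitive; it is precisely here that the almost admissibility enters, via Corollary \ref{cor:adamsKn}, to guarantee that the analytic ramification $\bK_n$ suffices.
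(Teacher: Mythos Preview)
Your core argument --- using Corollary \ref{cor:adamsKn} to produce a rank-one right factor over $\bK_n$ and then Lemma \ref{lemma:fattorizzazioneirriducile} to descend it to a $\bK$-submodule isomorphic to $\cN_{\mu/n,\la}$, forcing $\cM\cong\cN_{\mu/n,\la}$ by irreducibility --- is exactly the paper's proof. The paper is terser: it outsources the simplicity of $\cN_{\mu/n,\la}$ to \cite[Lemma 3.9]{SoibelmanVologodsky} rather than proving it (so your Galois-descent argument is a genuine addition), omits the non-isomorphism check entirely, and dismisses the formal case as ``well known'' without pointing to Theorem \ref{thm:adamsformale}.
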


\begin{proof}
The corollary is well known for $\what\cB_q$. Let us prove the statement for the
category \qdiffKaa.
\par
Rank 1 irreducible objects of \qdiffKaa\ are necessarily admissible, therefore
they are of the form $\cM_{\mu,\la}$, for some $\mu\in\Z$ and some $\la\in\bC^\ast/q^\Z$.
Consider an irreducible object $\cM$ in \qdiffKaa\ of higher rank. Because of the
previous lemma and of Corollary \ref{cor:adamsKn},
it must contain an object of the form $N_{\mu,\la,n}$, for convenient
$\mu$, $\la$, $n$. The irreducibility implies that $\cM\cong\cN_{\mu,\la,n}$.
\end{proof}

\begin{rmk}
Consider the rank 1 modules $\cN_{\mu,\la,n}$ over $\bK_n$ and $\cN_{r\mu,\la,rn}$
over $\bK_{rn}$, for some $\mu,r,n\in\Z$, $r>1$, $n>0$, $(\mu,n)=1$, and $\la\in\C^\ast$.
Then $Res_n(\cN_{\mu,\la,n})$ is a rank $n$ $q$-difference module over $\bK$, while
$Res_{rn}(\cN_{r\mu,\la,rn})$ has rank $rn$, although $\cN_{\mu,\la,n}$ and $\cN_{r\mu,\la,rn}$ are
associated to the same rank one operator.
\par
Writing explicitly the basis of $\bK_{rn}$ over $\bK_n$ and over $\bK$,
one can show that $Res_{rn}(\cN_{r\mu,\la,rn})$ is a direct sum of $r$ copies of
$Res_n(\cN_{\mu,\la,n})$.
\end{rmk}

\subsection{Structure theorem for almost admissible
$q$-difference mdoules}

Now we are ready to state a structure theorem for almost admissible
$q$-difference modules:

\begin{thm}\label{thm:M-adams}
Suppose that the $q$-difference module $\cM=(M,\Sgq)$ over $\bK$
is almost admissible, with Newton polygon
$\{(\mu_1,r_1),\dots,(\mu_k,r_k)\}$.
Then
$$
M=M_1\oplus M_2\oplus\dots\oplus M_k\,,
$$
where the $q$-difference modules $\cM_i=(M_i,{\Sgq}_{\vert {M_i}})$ are defined over $\bK$,
almost admissible and pure of slope $\mu_i$ and rank $r_i$.
\par
Each $\cM_i$ is a direct sum of
almost admissible indecomposable
$q$-difference modules, \ie iterated non trivial extension
of a simple almost admissible $q$-difference module by itself.
\end{thm}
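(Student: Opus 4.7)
My plan is to proceed in three stages, mirroring at the module level the operator-factorization technique of Theorem~\ref{thm:adams}, followed by a Krull--Schmidt style argument within each isotypic component.

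\emph{Stage 1 (slope decomposition).} By the cyclic vector lemma I write $\cM \cong \bK[\sgq,\sgq^{-1}]/\bK[\sgq,\sgq^{-1}]\cL$ for some almost admissible $\cL \in \bC\{x\}[\sgq]$ with Newton polygon $\{(\mu_1,r_1),\ldots,(\mu_k,r_k)\}$. For each $i$, the almost admissible variant of Theorem~\ref{thm:adams} (applied over $\bK_n$ if necessary, then descended to $\bK$ using Galois invariance of slopes) provides a factorization $\cL = \cL_i \circ \cL_i^{\ast}$ with $\cL_i$ pure of slope $\mu_i$ and order $r_i$. The short exact sequence $0 \to \bK[\sgq]\cL_i^{\ast}/\bK[\sgq]\cL \to M \to \bK[\sgq]/\bK[\sgq]\cL_i^{\ast} \to 0$ (together with the Ore-domain identification $\bK[\sgq]\cL_i^{\ast}/\bK[\sgq]\cL \cong \bK[\sgq]/\bK[\sgq]\cL_i$) exhibits $M_i := \bK[\sgq]\cL_i^{\ast}/\bK[\sgq]\cL \subset M$ as a submodule pure of slope $\mu_i$ and rank $r_i$. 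Since slopes are invariants of sub- and quotient modules, any submodule of $\sum_{j \neq i} M_j$ has slopes in $\{\mu_j : j\neq i\}$, so $M_i \cap \sum_{j\neq i} M_j = 0$; hence the sum is direct, and the rank identity $\sum_i r_i = \nu$ forces $M = M_1 \oplus \cdots \oplus M_k$.

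\emph{Stage 2 (refinement by exponent class).} Fix $\cM_i$ and, up to further ramification, assume $\mu_i \in \Z$. Let $C_1,\ldots,C_s$ be the partition of $\Exp(\cL_i,\mu_i)$ into classes modulo $q^{\Z}$. I apply Lemma~\ref{lemma:adams} iteratively, peeling off first all exponents of $C_1$ (in the intra-class order that the lemma prescribes), then those of $C_2$, and so on; at each step hypothesis~1 of the lemma is vacuous whenever the chosen exponent lies in a class different from that of the remaining ones, and hypothesis~2 follows from the almost admissibility of $\cL_i$. The resulting factorization $\cL_i = \cK_1 \circ \cdots \circ \cK_s$, with $\cK_\alpha$ pure of slope $\mu_i$ and all its exponents in $C_\alpha$, then yields by the same sub-/quotient argument as in Stage~1 a decomposition $\cM_i = \bigoplus_{\alpha=1}^{s} \cM_{i,\alpha}$, where $\cM_{i,\alpha}$ is pure of slope $\mu_i$ with all exponents in the single class $C_\alpha$.

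\emph{Stage 3 (isotypic indecomposables, and the main obstacle).} Every composition factor of $\cM_{i,\alpha}$ is isomorphic to the single simple object $\cS_\alpha$ of \qdiffKaa\ attached to $(\mu_i,C_\alpha)$ by the classification of \S\ref{subsec:simpleobjects}, since slope and exponent class modulo $q^{\Z}$ are subquotient-invariants. Schur's lemma makes the endomorphism ring of $\cS_\alpha$ a division algebra over $\bC$, so the Krull--Schmidt theorem applies to $\cM_{i,\alpha}$ and produces a decomposition into indecomposables in \qdiffKaa; each such summand has a composition series all of whose factors are $\cong \cS_\alpha$ and is therefore an iterated non-trivial self-extension of $\cS_\alpha$. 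The crux of the whole proof lies in Stage~2: one must confirm that the successive exponent-class groupings remain analytic, not merely formal, and this is precisely what almost admissibility guarantees via Corollary~\ref{cor:smalldivisors}, which ensures convergence of the series $\phi_{(q;\ul\La)}$ of Definition~\ref{defn:admissible} invoked at each step of Lemma~\ref{lemma:adams}; Example~\ref{exa:admissibility} shows that without this hypothesis only a formal splitting can be achieved.
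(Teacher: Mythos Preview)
Your proposal is correct and follows essentially the same route as the paper, which simply declares Theorem~\ref{thm:M-adams} equivalent to the operator factorization Theorem~\ref{thm:AdamsAlmostAdmissible} and proves only the latter; you are spelling out the module-level translation (factorizations with varying permutations $\leadsto$ pure submodules $\leadsto$ directness via slope/exponent-class invariants) that the paper leaves implicit. The only minor deviations are your Galois-descent phrasing in Stage~1, where the paper instead invokes Lemma~\ref{lemma:fattorizzazioneirriducile} to descend factors from $\bK_n$ to $\bK$, and your abstract Krull--Schmidt argument in Stage~3, where the paper gives the concrete description of indecomposables as $\cN\otimes_\bK\cU_\nu$ in Remark~\ref{rmk:indecomposable}.
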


\begin{rmk}\label{rmk:indecomposable}
More precisely, consider the rank $\nu$
unipotent $q$-difference module $\mathcal U_\nu=(U_\nu,\Sgq)$, defined by the
property of having a basis $\ul e$ such that the action of $\Sgq$ on $\ul e$ is described by
a matrix composed by a single Jordan block with eigenvalue 1.
Then the indecomposable modules $\cN$ in the previous theorem are isomorphic
to $\cN\otimes_\bK{\mathcal U}_{\nu}$, for some irreducible module $\cN$
of \qdiffKaa\ and some $\nu$.
\end{rmk}

The theorem above is equivalent to a stronger version of
Theorem \ref{thm:adams} for almost admissible
$q$-difference operators:

\begin{thm}\label{thm:AdamsAlmostAdmissible}
Suppose that the $q$-difference operator $\cL$ is almost admissible, with Newton polygon
$\{(\mu_1,r_1),\dots,(\mu_k,r_k)\}$.
Then for any permutation $\varpi$ on the set $\{1,\dots,k\}$
there exists a factorization of $\cL$:
$$
\cL=\cL_{\varpi,1}\circ\cL_{\varpi,2}\circ\dots\circ\cL_{\varpi,k}\,,
$$
such that $\cL_{\varpi,i}\in\bC\{x\}[\sgq]$ is almost
admissible and pure of slope $\mu_{\varpi(i)}$ and order $r_{\varpi(i)}$.
\par
Moreover, for any $i=1,\dots,k$, write $\mu_i=d_i/s_i$,
with $d_i,s_i\in\Z$,
$s_i>0$ and $(d_i,s_i)=1$.
We have:
$$
\cL_{\varpi,i}=\cL_{d_{\varpi(i)},\la_l^{\varpi(i)},s_{\varpi(i)}}\circ\dots
\circ\cL_{d_{\varpi(i)},\la_1^{\varpi(i)},s_{\varpi(i)}}\,,
$$
where:\\
$\bullet$
$\la_1^{\varpi(i)},\dots,\la_l^{\varpi(i)}$
are exponents of the slope $\mu_{\varpi(i)}$, ordered so that
$\la^{\varpi(i)}_j\l(\la^{\varpi(i)}_{j^\p}\r)^{-1}\in q^{\Z_{>0}}$ then $j<j^\p$;\\
$\bullet$
the operator $\cL_{d_{\varpi(i)},\la_j^{\varpi(i)},s_{\varpi(i)}}$ is associated to the module
$\cN_{d_{\varpi(i)},\la_j^{\varpi(i)},s_{\varpi(i)}}$.\\
\end{thm}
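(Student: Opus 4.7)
The plan is to deduce Theorem \ref{thm:AdamsAlmostAdmissible} from the module-theoretic decomposition in Theorem \ref{thm:M-adams}, translating it into an operator factorization by means of a carefully chosen cyclic vector.

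Let $\cM=(M,\Sgq)$ be the $q$-difference module associated to $\cL$; since $\cL$ is almost admissible, so is $\cM$. By Theorem \ref{thm:M-adams}, there is a decomposition $M=M_1\oplus\cdots\oplus M_k$ into pure submodules $\cM_i$ of slope $\mu_i$ and rank $r_i$, and each $\cM_i$ is a direct sum of indecomposables which, by Remark \ref{rmk:indecomposable}, are isomorphic to $\cN_{d_i,\lambda,s_i}\otimes_\bK\mathcal U_\nu$ for various exponents $\lambda$ and $\nu\ge 1$. Given a permutation $\varpi$ of $\{1,\dots,k\}$, form the filtration $F_j=\bigoplus_{i\le j}M_{\varpi(i)}$. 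The key step is to select a cyclic vector $m\in M$ whose image in each quotient $F_j/F_{j-1}\cong\cM_{\varpi(j)}$ is cyclic for $\cM_{\varpi(j)}$. This is possible by a genericity argument: the locus of cyclic vectors in each summand is Zariski-open and non-empty, and so is their common preimage in $M$, which is non-empty since $\bK$ is infinite. Writing $\cL$ as the annihilator of $m$ in $\bK[\sgq]$ and extracting, at each step, the annihilator of the image of $m$ modulo $F_{j-1}$, one obtains a factorization $\cL=\cL_{\varpi,1}\circ\cdots\circ\cL_{\varpi,k}$ in which $\cL_{\varpi,j}$ is associated to $\cM_{\varpi(j)}$, hence pure of slope $\mu_{\varpi(j)}$, of order $r_{\varpi(j)}$, and almost admissible.

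For the finer factorization of each $\cL_{\varpi,i}$, one refines the indecomposable decomposition of $\cM_{\varpi(i)}$ into a composition series whose successive quotients are all isomorphic to modules of the form $\cN_{d_{\varpi(i)},\lambda,s_{\varpi(i)}}$. Each indecomposable $\cN_{d_{\varpi(i)},\lambda,s_{\varpi(i)}}\otimes\mathcal U_\nu$ admits a canonical $\nu$-step filtration with successive quotients all isomorphic to $\cN_{d_{\varpi(i)},\lambda,s_{\varpi(i)}}$; merging these filtrations across all indecomposable summands, and arranging the order so that $\lambda$ appears before $\lambda'$ whenever $\lambda(\lambda')^{-1}\in q^{\Z_{>0}}$ — as dictated by the ramified version of Lemma \ref{lemma:adams} over $\bK_{s_{\varpi(i)}}$ — produces an ordering $\lambda^{\varpi(i)}_1,\dots,\lambda^{\varpi(i)}_l$ satisfying the theorem's constraint. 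Repeating the cyclic-vector argument on $\cM_{\varpi(i)}$ for this composition series yields the desired refinement $\cL_{\varpi,i}=\cL_{d_{\varpi(i)},\lambda^{\varpi(i)}_l,s_{\varpi(i)}}\circ\cdots\circ\cL_{d_{\varpi(i)},\lambda^{\varpi(i)}_1,s_{\varpi(i)}}$, since the $j$-th successive quotient is by construction isomorphic to $\cN_{d_{\varpi(i)},\lambda^{\varpi(i)}_j,s_{\varpi(i)}}$.

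The main obstacle is ensuring the cyclic-vector selection is compatible with the combined filtration. A single vector $m\in M$ must project to a cyclic vector in each of the $k$ pure quotients, and inside each pure part the chosen cyclic vector must further project to cyclic vectors on every stage of the refined Jordan-type filtration; while each individual condition is Zariski-open and dense, one must verify that the intersection of these finitely many dense opens remains non-empty, which is immediate since $\bK$ is infinite. A secondary subtlety is confirming that the admissibility of the slope (Definition \ref{defn:admissible}) is precisely the small-divisor hypothesis required to extract the factors $\cL_{d_{\varpi(i)},\lambda^{\varpi(i)}_j,s_{\varpi(i)}}$ in any order satisfying the $q^{\Z_{>0}}$ constraint, which reduces via ramification to $\bK_{s_{\varpi(i)}}$ and invocation of Lemma \ref{lemma:adams}.
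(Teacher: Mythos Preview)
Your argument is circular in the paper's logical structure. You invoke Theorem~\ref{thm:M-adams} to derive Theorem~\ref{thm:AdamsAlmostAdmissible}, but Theorem~\ref{thm:M-adams} receives no independent proof in the paper: the text declares the two theorems \emph{equivalent} and only supplies a proof of Theorem~\ref{thm:AdamsAlmostAdmissible}. The intended direction of implication is the reverse of the one you assume, so the module decomposition is not available to you as input.

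The paper's proof stays entirely on the operator side and is much shorter. By Corollary~\ref{cor:adamsKn}, the almost admissible operator $\cL$ factors over $\bC\{x^{1/n}\}[\sgq]$ into rank-one pieces $(x^{\mu/n}\sgq-\la)h(x^{1/n})$, for a suitable $n$ making all slopes integral; the only issue is descent of the factorization to $\bC\{x\}[\sgq]$. Lemma~\ref{lemma:fattorizzazioneirriducile} supplies exactly this: a Euclidean-division argument in $\bC\{x\}[\sgq]$ shows that any such rank-one right factor over $\bK_n$ with $(\mu,n)=1$ forces the full operator $\cL_{d,\la,s}$ attached to the irreducible module $\cN_{d/s,\la}$ to divide $\cL$ on the right over $\bK$. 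Iterating this extraction yields the factorization of $\cL$ over $\bK$ directly, and Theorem~\ref{thm:M-adams} is then read off as a consequence. As a side remark, even if Theorem~\ref{thm:M-adams} were available, your Zariski-genericity argument for the cyclic vector is superfluous: the image of \emph{any} cyclic vector of $M$ in a quotient $M/N$ is automatically cyclic there, so the canonical generator $\bar 1$ corresponding to $\cL$ already produces right factors along the filtration.
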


\begin{proof}
Suppose that the operator has at least one non integral slope.
\emph{A priori} the operators $\cL_{\varpi,i}$ are defined over $\bC\{x^{1/n}\}$,
for some $n>1$. But it follows from
Lemma \ref{lemma:fattorizzazioneirriducile} that they are product of
operators associated to $q$-difference modules defined over $\bK$, of the form
$\cN_{\mu,\la,n}$, for same $\mu,n\in\Z$, $n>0$, and $\la\in\bC^\ast$.
\end{proof}

\subsection{Analytic vs formal classification}

The formal classification of $q$-difference modules with $|q|=1$
is studied in \cite{SoibelmanVologodsky}, by different techniques.
It can also be deduced by the results of the previous section, dropping the
diophantine assumptions, and establishing a formal factorization theorem for $q$-difference
operators:

\begin{thm}\label{thm:M-formaladams}
Consider a $q$-difference module $\cM=(M,\Sgq)$ over $\what \bK$, with
Newton polygon
$\{(\mu_1,r_1),\dots,(\mu_k,r_k)\}$.
Then
$$
M=M_1\oplus M_2\oplus\dots\oplus M_k\,,
$$
where the $q$-difference modules $\cM_i=(M_i,{\Sgq}_{\vert {M_i}})$ are defined over $\what \bK$
and are pure of slope $\mu_i$ and rank $r_i$.
\par
Each $\cM_i$ is a direct sum of
almost admissible indecomposable
$q$-difference modules, \ie iterated non trivial extension
of a simple almost admissible $q$-difference module by itself.
\end{thm}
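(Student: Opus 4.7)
The plan is to mimic the proof of the analytic structure theorem (Theorem \ref{thm:M-adams}), replacing Theorem \ref{thm:adams} by its formal counterpart Theorem \ref{thm:adamsformale}. Since no diophantine condition plays a role in the formal setting, the argument is in fact simpler than the analytic one.

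First I reduce to integral slopes. If some $\mu_i\notin\Z$, a ramification $t=x^{1/n}$ for a suitable $n\in\Z_{>0}$ turns $\cM_{\what\bK_n}$ into a $q^{1/n}$-difference module whose Newton polygon has only integral slopes; the desired decomposition of $\cM$ over $\what\bK$ will then be recovered from the one of $\cM_{\what\bK_n}$ over $\what\bK_n$ by restriction of scalars (as in \S\ref{subsec:simpleobjects}). I therefore assume $\mu_i\in\Z$ throughout. Choosing a cyclic vector and clearing denominators gives an operator $\cL\in\bC[[x]][\sgq]$ with $M\cong\what\bK[\sgq]/\what\bK[\sgq]\cL$ and with the same Newton polygon as $\cM$.

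Next I produce the slope decomposition. Applying Theorem \ref{thm:adamsformale} with the identity permutation, I obtain a factorization $\cL=\cL_1\circ\cdots\circ\cL_k$ with $\cL_i$ pure of slope $\mu_i$, hence an increasing filtration $0=F_0\subset F_1\subset\cdots\subset F_k=M$, where $F_i$ is the submodule generated by the right factor $\cL_{i+1}\circ\cdots\circ\cL_k$; by construction $F_i\cong\what\bK[\sgq]/\what\bK[\sgq](\cL_1\circ\cdots\circ\cL_i)$ has Newton polygon with slopes $\mu_1,\dots,\mu_i$. Applying Theorem \ref{thm:adamsformale} with the reverse permutation produces similarly a decreasing filtration $M=G_0\supset G_1\supset\cdots\supset G_k=0$ in which $G_j$ has slopes $\mu_{j+1},\dots,\mu_k$. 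Any submodule inherits a subset of the slopes of its ambient module (a formal consequence of the Newton polygon formalism, \cf Remark \ref{rmk:ManipSlopes}), so $F_i\cap G_i$ has empty slope set and therefore vanishes. Counting ranks, $\dim F_i+\dim G_i=\nu$, and hence $M=F_i\oplus G_i$. Setting $M_i:=F_i\cap G_{i-1}$ yields the direct sum $M=M_1\oplus\cdots\oplus M_k$ with $\cM_i$ pure of slope $\mu_i$ and rank $r_i$.

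Finally, I decompose each pure piece $\cM_i$. Twisting by the dual of $\cM_{\mu_i,1}$ (in the sense of Remark \ref{rmk:ManipSlopes}) reduces to the case of a pure slope-zero module, whose operator factors by Theorem \ref{thm:adamsformale} into pieces $(\sgq-\la_j)h_j(x)$, with $h_j\in 1+x\bC[[x]]$. Grouping exponents by their $q^\Z$-orbit gives an isotypic decomposition, since two rank-one pure slope-zero modules $\cM_{0,\la}$ and $\cM_{0,\la'}$ with $\la/\la'\notin q^\Z$ admit no nonzero morphism (\cf Example \ref{exa:rank1}). Within each isotypic component all simple constituents are isomorphic to a single $\cM_{0,\la}$, so the component is an iterated self-extension of $\cM_{0,\la}$, and a Jordan-block argument applied to the induced nilpotent endomorphism of $\mathrm{Hom}(\cM_{0,\la},-)$ produces the indecomposable summands $\cM_{0,\la}\otimes_{\what\bK}{\mathcal U}_\nu$ of Remark \ref{rmk:indecomposable}. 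Untwisting yields the analogous decomposition for $\cM_i$. The main obstacle is the transversality argument that splits the slope filtration; once one has the two extremal factorizations of Theorem \ref{thm:adamsformale} together with the slope-inheritance property of submodules, the rest is formal book-keeping parallel to the analytic proof.
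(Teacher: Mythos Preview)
Your argument is essentially correct and follows the route the paper indicates: the paper gives no detailed proof of Theorem \ref{thm:M-formaladams}, merely stating that it ``can also be deduced by the results of the previous section, dropping the diophantine assumptions'' via Theorem \ref{thm:adamsformale}, exactly as you do. Your two-filtration transversality argument for splitting the slopes is a clean way to pass from the operator factorizations (with the two extreme permutations) to the module decomposition, and the treatment of the pure pieces via isotypic components and Jordan blocks is fine.

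One point deserves tightening. The phrase ``recovered \dots\ by restriction of scalars (as in \S\ref{subsec:simpleobjects})'' is not quite the right mechanism: applying $Res_n$ to $\cM_{\what\bK_n}$ does not return $\cM$, so a direct-sum decomposition of $\cM_{\what\bK_n}$ does not literally restrict to one of $\cM$. What you actually need is that the pure summands $M_i$ descend to $\what\bK$. This is immediate once you note that the slope filtration and co-filtration are \emph{canonical} (the maximal submodule with slopes in a given interval), hence stable under $\mathrm{Gal}(\what\bK_n/\what\bK)$, so each $M_i=F_i\cap G_{i-1}$ is already defined over $\what\bK$. Alternatively, you can argue as the paper does in the analytic case (proof of Theorem \ref{thm:AdamsAlmostAdmissible}): the factors $\cL_{\varpi,i}$, although a priori over $\what\bK_n$, are products of operators associated to the $\cN_{\mu/n,\la}$, which by the formal analogue of Lemma \ref{lemma:fattorizzazioneirriducile} are submodules over $\what\bK$; hence the $\cL_{\varpi,i}$ themselves lie in $\bC[[x]][\sgq]$. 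Either fix closes the gap; the rest of your proof stands.
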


\begin{rmk}
Irreducible objects are $q$-difference modules over $\what \bK$ obtained by rank one modules
associated to $q$-difference equations of the form $x^\mu y(qx)=\la y(x)$, with
$\mu\in\Q$ and $\la\in\bC^\ast$, by restriction of scalars.
\end{rmk}

Hence the first part of Theorem \ref{thm:mainthm} can be proved:

\begin{prop}\label{prop:formalVSanalytic}
Let $\cM=(M,\Sgq^M)$ and $\cN=(N,\Sgq^N)$ be two almost admissible
$q$-difference modules over $\bK$. Then $\cM$ is isomorphic to $\cN$
over $\bK$ if and only if $\cM_{\what \bK}$ is isomorphic to $\cN_{\what
\bK}$ over $\what \bK$.
\end{prop}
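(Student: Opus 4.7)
The direction from analytic to formal isomorphism is immediate from functoriality of the scalar-extension functor $-\otimes_\bK\what\bK$; so the content of the proposition lies in the converse. The plan is to use the structure theorem for almost admissible modules (Theorem \ref{thm:M-adams}) and its formal counterpart (Theorem \ref{thm:M-formaladams}) to match the indecomposable decompositions of $\cM$ and $\cN$ through any given formal isomorphism $\phi\colon \cM_{\what\bK}\xrightarrow{\sim}\cN_{\what\bK}$, and then to conclude $\cM\cong\cN$ over $\bK$ by a Krull--Schmidt-type uniqueness argument.

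\textbf{Step 1: reduction to pure modules of a common slope.} I will first apply Theorem \ref{thm:M-adams} to write $\cM=\bigoplus_i \cM_i$ and $\cN=\bigoplus_i\cN_i$ as direct sums of pure almost admissible components of slopes $\mu_i$. Since the Newton polygon is a formal invariant (cf.\ Theorem \ref{thm:M-formaladams}), the indexing slopes on the two sides coincide. A standard verification shows that in $\what\cB_q$ there are no nonzero morphisms between pure modules of distinct slopes, so $\phi$ preserves the slope decomposition and restricts to formal isomorphisms $(\cM_i)_{\what\bK}\cong (\cN_i)_{\what\bK}$. This reduces the problem to the case where $\cM$ and $\cN$ are both pure almost admissible of the same slope.

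\textbf{Step 2: matching indecomposables.} By Remark \ref{rmk:indecomposable}, under this reduction each of $\cM$ and $\cN$ splits as a direct sum of indecomposables of the form $\cN_{d,\la,s}\otimes_\bK \mathcal{U}_\nu$, classified up to analytic isomorphism by the triple $(d/s,\,[\la]\in\bC^\ast/(q^{1/s})^\Z,\,\nu)$. The multiset of such triples can be read off from the underlying formal module: the slopes, the classes of the exponents modulo $q^\Z$, and the sizes of the Jordan blocks for the $\Sgq$-action on the graded piece are all invariants of the formal isomorphism class, by Theorem \ref{thm:M-formaladams}. Applied to both $\cM$ and $\cN$, the two multisets coincide, so the analytic indecomposable decompositions match summand-by-summand up to reordering, and hence $\cM\cong \cN$ over $\bK$.

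\textbf{Main obstacle.} The delicate point of the plan is to establish the Krull--Schmidt property in the category of almost admissible $q$-difference modules, i.e.\ the uniqueness of the indecomposable decomposition up to reordering and isomorphism. This reduces to showing that each $\cN_{d,\la,s}\otimes \mathcal{U}_\nu$ has local endomorphism ring with residue field $\bC$: the diagonal part is the nilpotent action on $\mathcal{U}_\nu$, and ruling out spurious off-diagonal endomorphisms uses the admissibility hypothesis in an essential way. This is precisely the small-divisor problem handled in Section \ref{sec:smalldiv}, which simultaneously guarantees that the basis change identifying the two analytic decompositions, a priori only defined formally, actually converges.
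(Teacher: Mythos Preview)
Your Steps 1 and 2 are correct and mirror the paper's argument closely. The paper's presentation differs slightly: it first passes to $\bK_n$ to reduce from almost admissible to admissible (integral slopes), and then concludes in one line by invoking Corollary \ref{cor:adams}---that any formal factorization of an admissible operator is already convergent---rather than by explicitly matching indecomposable summands. Both routes rest on the same content: the structure theorems (Theorems \ref{thm:M-adams} and \ref{thm:M-formaladams}) together with the fact that the classifying data (slopes, exponent classes modulo $q^\Z$, Jordan block sizes) are formal invariants.

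Your ``Main obstacle'' paragraph, however, is an unnecessary detour. You do not need an abstract Krull--Schmidt argument or any analysis of local endomorphism rings: once you grant (as you already assert in Step 2) that the multiset of triples $(d/s,[\la],\nu)$ is a formal invariant---which follows from the intrinsic nature of the slopes, the exponents modulo $q^\Z$, and the Jordan structure of the constant form in Corollary \ref{cor:vectorspace}---the decomposition is determined and the conclusion is immediate. The small-divisor input has already been absorbed into the proof of Theorem \ref{thm:M-adams} and need not reappear here.
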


\begin{proof}
It follows from the analytic (resp. formal) factorizations of
$q$-difference modules over $\bK$ (resp. $\what \bK$) that:
$$
\cM\cong\cN\Leftrightarrow\cM_{\bK_n}\cong\cN_{\bK_n}
\hskip 10 pt\hbox{and}\hskip 10 pt
\cM_{\what \bK}\cong\cN_{\what \bK}\Leftrightarrow\cM_{\what \bK_n}\cong\cN_{\what \bK_n}\,,
$$
for an integer $n\geq 1$ such that the the slopes of the two modules become integral
over $\bK_n$.
So we can suppose that the two modules are actually admissible.
\par
If $\cM$ and $\cN$ are isomorphic over $\bK$ than they are necessarily
isomorphic over $\what \bK$. On the other side suppose that
$\cM_{\what \bK}\cong\cN_{\what \bK}$. Then the results follows
from the fact that any formal factorization must actually be analytic
(\cf Corollary \ref{cor:adams}).
\end{proof}

For further reference we point out that we have proved the following statement:

\begin{cor}\label{cor:vectorspace}
Let $\cM=(M,\Sgq)$ be a pure $q$-difference module over $\what\bK$ (resp. a pure
almost admissible $q$-difference module over $\bK$), of slope $\mu$ and rank $\nu$.
Then for any $n\in\Z_{\geq 1}$ such that $n\mu\in\Z$, there exists a
$\bC$-vector space $V$ contained in $M_{\what\bK_n}$
(resp. $M_{\bK_n}$), of dimension $\nu$, such that
$x^{\mu}\Sgq(V)\subset V$.
\end{cor}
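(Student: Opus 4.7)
The plan is to reduce, via a ramification and the structure theorem, to the explicit rank one case, in which the required $\bC$-subspace can be written down by inspection.

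\textbf{Step 1: Reduction to integral slope.} Since $n\mu\in\Z$, I pass to the scalar extension $\cM_{\bK_n}$ (resp.\ $\cM_{\what\bK_n}$) with ramified variable $t=x^{1/n}$ and $q^{1/n}$-difference structure. Under this extension the slopes of the Newton polygon are multiplied by $n$, so the extended module is pure of integer slope $n\mu$ in $t$, and remains almost admissible (resp.\ an object of $\what\cB_q$). Since $x^{\mu}=t^{n\mu}$ as elements of $\bK_n$, the condition $x^{\mu}\Sgq(V)\subset V$ reads $t^{n\mu}\Sgq(V)\subset V$, so it suffices to prove the statement assuming $\mu\in\Z$, after renaming $(\bK_n,t)$ as $(\bK,x)$.

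\textbf{Step 2: Decomposition into elementary pieces.} Theorem \ref{thm:M-adams} (resp.\ Theorem \ref{thm:M-formaladams}) splits the pure module $\cM$ into a direct sum of indecomposables, and Remark \ref{rmk:indecomposable} asserts that each indecomposable summand is isomorphic to $\cN_j\otimes_\bK\mathcal{U}_{\nu_j}$, where $\cN_j$ is an irreducible pure module of slope $\mu$ and $\mathcal{U}_{\nu_j}$ is the rank $\nu_j$ unipotent module. Since $\mu$ is now an integer, every such $\cN_j$ has rank one and is isomorphic to some $\cM_{\mu,\la_j}$. By the construction of Example \ref{exa:rank1}, using in the analytic case the convergence of the conjugating series $h(x)$ furnished by Proposition \ref{prop:adams} and Corollary \ref{cor:adams}, each $\cN_j$ admits a basis vector $f_j$ with $\Sgq f_j=\la_j x^{-\mu}f_j$, equivalently $x^{\mu}\Sgq f_j=\la_j f_j$.

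\textbf{Step 3: Explicit construction of $V$.} Fix in each $\mathcal{U}_{\nu_j}$ a basis $u_1^{(j)},\dots,u_{\nu_j}^{(j)}$ in which $\Sgq$ acts as a single Jordan block with eigenvalue $1$, i.e.\ $\Sgq u_i^{(j)}=u_i^{(j)}+u_{i-1}^{(j)}$ with the convention $u_0^{(j)}:=0$. A direct computation then gives
$$
x^{\mu}\Sgq\bigl(f_j\otimes u_i^{(j)}\bigr)
=\la_j\bigl(f_j\otimes u_i^{(j)}\bigr)+\la_j\bigl(f_j\otimes u_{i-1}^{(j)}\bigr),
$$
so the $\bC$-linear span of $\{f_j\otimes u_i^{(j)}:1\le i\le\nu_j\}$ is stable under $x^{\mu}\Sgq$. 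Taking the direct sum of these spans over all indecomposable summands produces a $\bC$-subspace $V$ of the required dimension $\nu=\sum_j\nu_j$ satisfying $x^{\mu}\Sgq(V)\subset V$.

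The only genuinely nontrivial point is in Step 2: the existence of the eigenvector $f_j$ in $\cN_j$ itself, and not merely in $\cN_j\otimes_\bK\what\bK$, in the analytic case. This is precisely where the almost admissibility hypothesis is used, through the analytic factorization results of Section \ref{sec:factorisation}; everything else is formal manipulation.
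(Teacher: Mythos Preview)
Your proof is correct and follows exactly the route the paper has in mind: the corollary is stated as a consequence of the structure theorems (Theorem~\ref{thm:M-adams}, Remark~\ref{rmk:indecomposable}, and Theorem~\ref{thm:M-formaladams}), and you simply unpack that by passing to $\bK_n$ to make the slope integral, decomposing into indecomposables $\cN_j\otimes\cU_{\nu_j}$, and reading off the $\bC$-span of the $f_j\otimes u_i^{(j)}$. One small sharpening of your closing remark: the almost admissibility hypothesis is already doing its real work when you invoke Theorem~\ref{thm:M-adams} to obtain the direct sum decomposition and the form of the indecomposables; the existence of the eigenvector $f_j$ in a rank one piece needs only the standing assumption on $q$ (cf.\ Example~\ref{exa:rank1}).
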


\subsection{End of the proof of Theorem \ref{thm:mainthm}}

Theorem \ref{thm:mainthm} states that $\cB_q^{iso}=$\qdiffKaa. Proposition
\ref{prop:formalVSanalytic} implies that \qdiffKaa\ is a subcategory of $\cB_q^{iso}$.
To conclude it is enough to prove the following lemma:

\begin{lemma}
Let $\cM\in\cB_q$. We suppose that any $\cN\in\cB_q$ such that
$\cM_{\what\bK}\cong\cN_{\what\bK}$  in $\what\cB_q$
is already isomorphic to $\cM$ in $\cB_q$.
Then $\cM$ is almost admissible.
\end{lemma}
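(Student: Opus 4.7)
The plan is to establish the lemma by contrapositive: assuming $\cM$ is \emph{not} almost admissible, I exhibit some $\cN\in\cB_q$ with $\cM_{\what\bK}\cong\cN_{\what\bK}$ in $\what\cB_q$ yet $\cM\not\cong\cN$ in $\cB_q$, contradicting the hypothesis.

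First, apply the formal structure theorem (Theorem \ref{thm:M-formaladams}) to $\cM_{\what\bK}$, yielding a decomposition into simple indecomposable formal modules of the shape $(\cN_{\mu_i,\la_{i,j}}\otimes\mathcal{U}_{\nu_{i,j}})_{\what\bK}$, where $\mu_i$ ranges over the slopes of the Newton polygon, $\la_{i,j}$ ranges over representatives of the $q^\Z$-classes of exponents of slope $\mu_i$, and $\mathcal{U}_\nu$ denotes the unipotent module of Remark \ref{rmk:indecomposable}. Then define
\[
\cN:=\bigoplus_{i,j}\cN_{\mu_i,\la_{i,j}}\otimes_\bK\mathcal{U}_{\nu_{i,j}}
\]
as an object of $\cB_q$. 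Each summand is an almost admissible indecomposable (its set $\ul\La$ is empty, since its exponents all lie in a single $q^\Z$-class), so $\cN$ is a well-defined object of $\cB_q$; moreover, by construction, $\cN_{\what\bK}\cong\cM_{\what\bK}$ in $\what\cB_q$.

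It remains to show that $\cM\not\cong\cN$ in $\cB_q$. Suppose, toward a contradiction, that $\cM\cong\cN$. Then $\cM$ splits analytically as the direct sum of the pure isotypic summands $\tilde\cI_{i,j}:=\cN_{\mu_i,\la_{i,j}}\otimes_\bK\mathcal{U}_{\nu_{i,j}}$. Choosing a cyclic vector of $\cM$ adapted to this decomposition and reading off the corresponding operator $\cL$ in $\bC\{x^{1/n}\}[\sgq]$ (for a suitable ramification order $n$), we obtain analytic right factors of the form $(x^{\mu_i}\sg_{q^{1/n}}-\la_{i,j})h_{i,j}(x^{1/n})$ with $h_{i,j}\in 1+x^{1/n}\bC\{x^{1/n}\}$, and we can \emph{peel them off in any order}, since the summands may be freely reordered. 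By the essentially converse direction of Lemma \ref{lemma:adams} (whose proof shows that the analyticity of such an $h_{i,j}$ is equivalent to the convergence of the relevant series $\phi_{(q;\ul\La)}$), the existence of this complete family of factorizations forces the convergence of $\phi_{(q;\ul\La_i)}$ for every slope $\mu_i$, hence the almost admissibility of $\cM$. This is the sought contradiction.

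The main obstacle lies precisely in this last step: translating the direct sum decomposition of the module $\cM$ in $\cB_q$ into the full family of analytic operator factorizations, and then extracting almost admissibility from these factorizations via the near-converse of Lemma \ref{lemma:adams}. The ingredients are already present in Section \ref{sec:factorisation} — the recursion constructed in the proof of Lemma \ref{lemma:adams} is sharp, so the analyticity of $h$ and the convergence of the corresponding $\phi$ series are equivalent — but one must carefully check that an arbitrary ordering of the exponents can actually be realized by peeling off the summands of the decomposition in the desired order. Once this bridge is laid, the contrapositive is complete and the lemma follows.
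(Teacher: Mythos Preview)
There is a genuine gap in your final step. You want to show that if $\cM$ is not almost admissible then $\cM\not\cong\cN$, arguing that $\cM\cong\cN$ would force almost admissibility via a ``converse of Lemma~\ref{lemma:adams}''. But that converse does not hold: the recursion in the proof of Lemma~\ref{lemma:adams} only shows that convergence of $\phi_{(q;\ul\La)}$ implies analyticity of the unique $h$, not the other way around. Your own $\cN$ already illustrates this. In the pure rank-$2$ slope-zero case with exponents $1,\la$ (distinct modulo $q^\Z$) one has $\cN=\cM_{0,1}\oplus\cM_{0,\la}$; a cyclic vector with constant coordinates gives the constant-coefficient operator $(\sgq-1)(\sgq-\la)=(\sgq-\la)(\sgq-1)$, and both factorizations have $h\equiv 1$ --- trivially analytic, regardless of whether $\phi_{(q;\la)}$ converges. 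So nothing prevents $\cM$ from being $\cN$ itself, and no contradiction arises from $\cM\cong\cN$.

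What is missing is a \emph{second} analytic module in the same formal class that is provably not isomorphic to $\cN$ when admissibility fails. The paper supplies it in Step~1 of its proof via the crucial example of \S\ref{subsec:example}: the module associated to $(\sgq-1)\circ[\la\sgq-((q-1)x+1)]$ is formally diagonal with exponents $1,\la^{-1}$, but its factorization starting from the exponent $1$ involves the series $\Phi=\phi_{(q;q\la)}((1-q)x)$, whose convergence is exactly the admissibility condition (Example~\ref{exa:admissibility}). Hence in the non-admissible case this module and $\cN$ are two distinct analytic classes within the same formal class, contradicting the hypothesis of the lemma. The higher-rank and multi-slope cases (Steps~2 and~3) then reduce to this rank-$2$ computation by isolating a pair of exponents and by the slope-shifting of Remark~\ref{rmk:ManipSlopes}.
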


\begin{proof}
With no loss of generality,
can suppose that the slope of the Newton polygon of $\cM$ are integral.
We know that the lemma is true for rank one modules.
In the general case we prove the lemma by steps:
\begin{description}

\item
[{\it Step 1. Pure rank 2 modules of slope zero.}]
Let us suppose that $\cM$ is pure with Newton polygon $\{(0,2)\}$.
Then there exists a basis $\ul e$ of $\cM_{\what\bK}$ such that
$\Sgq\ul e=\ul e A$, with $A\in Gl_2(\bC)$ in the Jordan normal form.
The assumptions of the lemma
actually say that the basis $\ul e$ can chosen to be a basis of $\cM$ over $\bK$.
If $\cM$ has only one exponent modulo $q^\Z$, then $\cM$ is admissible.
So let us suppose that $\cM$ has at least two different exponents modulo
$q^\Z$: $\a,\be\in\bC$. An elementary manipulation on the exponents
(\cf Remark \ref{rmk:ManipSlopes}) allows to assume that $\be=1$.
This means that $A$ is a diagonal matrix of eigenvalues $1,\a$.
We are in the case of \S\ref{subsec:example}, so we already know
that there exists only one isoformal analytic isomorphism class
if and only if the module is admissible.

\item
[{\it Step 2. Proof of the lemma in the case of a pure module of slope zero.}]
Let us suppose that $\cM$ is pure with Newton polygon $\{(0,r)\}$.
Then there exists a basis $\ul e$ of $\cM$ over $\bK$ such that
$\Sgq\ul e=\ul e A$, with $A\in Gl_r(\bC)$ in the Jordan normal form.
If $\cM$ has only one exponent modulo $q^\Z$, then $\cM$ is admissible.
So let us suppose that $\cM$ has at least two different exponents modulo
$q^\Z$. For any couple of exponents $\a,\be$, distinct modulo $q^\Z$, the
module $\cM$ has a rank two submodule isomorphic to the module consider in step 1.
This implies that $\phi_{q,\a\be^{-1}}$ is convergent and hence that $\cM$ is admissible.

\item
[{\it Step 3. General case.}]
Let $\{(r_i,\mu_i):i=1,\dots,k\}$ be the Newton polygon of $\cM$.
The formal module $\cM_{\what\bK}$ admits a basis $\ul e$ such that
the matrix of $\Sgq$ with respect to $\ul e$ is a block diagonal matrix
of the form (\cf Corollary \ref{cor:vectorspace}):
$$
\Sgq\ul e=\ul e
\hbox{~diag}\begin{pmatrix}
\ds\frac{A_1}{x^{\mu_1}}&\cdots&\ds\frac{A_k}{x^{\mu_k}}
\end{pmatrix}\,,
$$
where $A_1,\dots,A_k$ are constant square matrices that we can suppose to be in Jordan normal form.
The assumption actually says that $\cM$ is isomorphic in $\cB_q$
to the $q$-difference module $\cN$ over $\bK$ generated by
the basis $\ul e$.
Since the slopes and the classes modulo $q^\Z$ of the exponents are
both analytic and formal
invariants, it is enough to prove the statement for pure modules.
If $\cM$ is pure, the statement is deduced by step 2, by elementary manipulation of the
slopes (\cf Remark \ref{rmk:ManipSlopes}).
\end{description}
This ends the proof of the lemma and therefore the proof of Theorem \ref{thm:mainthm}.
\end{proof}

\section{Structure of the category $\cB_q^{iso}$.
Comparison with the results in \cite{BaranovskyGinzburg} and
\cite{SoibelmanVologodsky}}
\label{sec:Soibelman}
%
%
%

The formal results above give another proof of the following:

\begin{thm}[{\cite[Thm. 3.12 and Thm. 3.14]{SoibelmanVologodsky}}]
The subcategory $\what\cB_q^f$ of $\what\cB_q$ of pure $q$-difference modules
of slope zero
is equivalent to the category of $\bC^\ast/q^\Z$-graded finite dimensional
$\bC$-vector spaces equipped with a nilpotent operator that preserves the grading.
\par
The category $\what\cB_q$ is equivalent to the category of $\Q$-graded objects of $\what\cB_q^f$.
\end{thm}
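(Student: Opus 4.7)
The proof splits into two parts; for both I would construct explicit quasi-inverse functors.

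\emph{Part 1 (slope-zero modules vs.\ graded vector spaces).} In the forward direction, given $\cM=(M,\Sgq)$ in $\what\cB_q^f$ of rank $\nu$, I apply Corollary \ref{cor:vectorspace} with $\mu=0$ and $n=1$ to extract a $\bC$-subspace $V\subset M$ of dimension $\nu$ stable under $\Sgq$. The restriction $\Sgq|_V$ is an invertible $\bC$-linear endomorphism, so it admits a multiplicative Jordan decomposition $\Sgq|_V=DU=UD$ with $D$ semisimple and $U$ unipotent. The generalized eigenspaces of $D$ give a $\bC^\ast$-grading on $V$; its image in $\bC^\ast/q^\Z$ is the sought grading, and $N:=U-\mathrm{Id}_V$ is nilpotent, commutes with $D$, and hence preserves the grading. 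The passage modulo $q^\Z$ is precisely what makes the construction canonical, since a different choice of $V$ rescales each eigenvector by an element of $q^\Z$. In the backward direction, given $\l(V=\bigoplus_{\bar\la}V_{\bar\la},N\r)$, I fix representatives $\la$ of each nontrivial $\bar\la$, let $D$ act as $\la\cdot\mathrm{Id}$ on $V_{\bar\la}$, and define $M:=V\otimes_\bC\what\bK$ with $\Sgq(v\otimes f):=(D+N)(v)\otimes\sgq(f)$ extended semilinearly.

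The main obstacle in Part 1 will be to show the functors are fully faithful, \ie that morphisms in $\what\cB_q^f$ correspond to grading-preserving $\bC$-linear maps commuting with $N$. The key input is the Hom-vanishing: if $\la/\la'\notin q^\Z$, then $\mathrm{Hom}_{\what\cB_q}(\cM_{0,\la},\cM_{0,\la'})=0$, because such a morphism would produce a nonzero $f\in\what\bK$ with $\sgq(f)=(\la/\la')f$, which is impossible. This vanishing separates the grading components, while a parallel direct computation identifies $\mathrm{Hom}_{\what\cB_q}(\cM_{0,\la},\cM_{0,\la})$ with $\bC$, accounting for the freedom of the nilpotent operator in the target category.

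\emph{Part 2 ($\what\cB_q$ vs.\ $\Q$-graded $\what\cB_q^f$).} I would use the formal structure theorem (Theorem \ref{thm:M-formaladams}) to decompose any $\cM\in\what\cB_q$ canonically as $M=\bigoplus_{\mu\in\Q}M_\mu$, with $M_\mu$ pure of slope $\mu$ and only finitely many nonzero; this is exactly the $\Q$-grading. Each summand $M_\mu$ is then identified with an object of $\what\cB_q^f$ via the twisting of Remark \ref{rmk:ManipSlopes}: for $\mu\in\Z$ directly by twisted conjugation $\theta^\mu$, and for non-integral $\mu=d/n$ with $(n,d)=1$ by first passing to $\what\bK_n$ to integralize the slope, twisting to slope zero, and descending along $\mathrm{Gal}(\what\bK_n/\what\bK)$. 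The descent step for non-integral slopes is the principal technical subtlety; it is handled exactly as the restriction-of-scalars functor $Res_n$ of the analytic case. Functoriality is automatic because slopes and $q^\Z$-classes of exponents are formal isomorphism invariants, so morphisms in $\what\cB_q$ automatically respect the Newton polygon decomposition and the induced $\Q$-grading.
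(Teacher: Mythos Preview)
Your proposal is correct and follows essentially the same route as the paper. The paper itself offers no explicit proof of this cited result---it simply writes ``The formal results above give another proof of the following''---but its proofs of the analytic analogs (Theorems~\ref{thm:Qgraded} and~\ref{thm:C/qZgraded}) proceed exactly as you describe: Corollary~\ref{cor:vectorspace} to produce the $\Sgq$-stable $\bC$-vector space $V$, Jordan normal form of $\Sgq|_V$ to read off the $\bC^\ast/q^\Z$-grading and the nilpotent operator, and for Part~2 the slope decomposition of Theorem~\ref{thm:M-formaladams} together with the observation that there are no nontrivial morphisms between pure modules of distinct slopes.

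One minor difference worth flagging: for Part~2 the paper (in its proof of Theorem~\ref{thm:Qgraded}) does not carry out the twist-and-descent procedure you sketch for non-integral slopes; it simply declares the degree-$\mu$ component to be the maximal pure submodule of slope $\mu$ and relies on Hom-vanishing between distinct slopes. The identification of each pure-slope-$\mu$ subcategory with $\what\cB_q^f$ is left implicit. Your more explicit ramification/descent discussion is not wrong, but it is also not needed to establish the equivalence at the level stated, and the descent step for $\mu\notin\Z$ is genuinely delicate (the irreducibles of slope $d/n$ are parametrized by $\bC^\ast/(q^{1/n})^\Z$, not $\bC^\ast/q^\Z$, so the bookkeeping requires care). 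If you want to match the paper's level of detail, you can safely omit that discussion.
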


Let $\cB_q^{iso,f}$ be the full subcategory of $\cB_q^{iso}$ of pure
$q$-difference modules of slope zero.
We have an analytic version of the result above:

\begin{thm}\label{thm:Qgraded}
The category $\cB_q^{iso}$ is equivalent to the category of $\Q$-graded objects
of $\cB_q^{iso,f}$ \ie each object of $\cB_q^{iso}$ is a direct sum indexed on $\Q$
of objects of $\cB_q^{iso,f}$ and the morphisms of $q$-difference modules respect the grading.
\end{thm}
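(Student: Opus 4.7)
The plan is to deduce this theorem from the structure theorem (Theorem \ref{thm:M-adams}) combined with a slope-shifting equivalence of categories, mimicking the strategy that is certainly used in the formal version quoted from \cite{SoibelmanVologodsky} at the beginning of this section.

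First, for any $\cM \in \cB_q^{iso}$, Theorem \ref{thm:M-adams} already yields a canonical decomposition $\cM = \bigoplus_i \cM_i$ with $\cM_i$ pure of slope $\mu_i$. Declaring the graded piece in degree $\mu\in\Q$ to be $\cM_i$ if $\mu=\mu_i$, and zero otherwise, produces a canonical $\Q$-indexed direct sum. Morphisms in $\cB_q^{iso}$ automatically preserve this grading: if $f:\cM\to\cN$ is non-zero with $\cM$ pure of slope $\mu$ and $\cN$ pure of slope $\mu'$, then $f(\cM)$ is simultaneously a submodule of $\cN$ and a quotient of $\cM$, so its Newton polygon has slopes lying in $\{\mu\}\cap\{\mu'\}$; since slopes are invariants of the isomorphism class this forces $\mu=\mu'$.

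The heart of the proof is to construct, for each $\mu\in\Q$, an equivalence between the full subcategory of pure slope $\mu$ objects of $\cB_q^{iso}$ (which I temporarily denote $\cB_q^{iso,\mu}$) and $\cB_q^{iso,f}$. For $\mu\in\Z$, I would set $\tau_\mu(\cM):=\cM\otimes_\bK\cM_{-\mu,1}$, with $\cM_{-\mu,1}$ as in Example \ref{exa:rank1}. At the level of an associated operator this is exactly the twisted conjugation of Remark \ref{rmk:ManipSlopes}, which shifts every slope by $-\mu$ and preserves admissibility because rank one modules are automatically admissible under our standing assumption. Tensor product with $\cM_{\mu,1}$ provides the quasi-inverse, so this case is clear.

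The case $\mu=d/n\in\Q\smallsetminus\Z$ with $(d,n)=1$ and $n>1$ is the main obstacle and requires the ramification machinery of \S\ref{subsec:simpleobjects}. My plan is to work over $\bK_n$: the module $\cM\otimes_\bK\bK_n$ becomes a $q^{1/n}$-difference module whose slopes have all become integers, so the integer-slope twist of the previous paragraph can be applied, producing a pure slope $0$ object over $\bK_n$, which is then pushed back down to $\bK$ by the functor $Res_n$. What must be checked is that the resulting composite $\cB_q^{iso,d/n}\to\cB_q^{iso,f}$ is essentially surjective and fully faithful. Using Theorem \ref{thm:M-adams} and Remark \ref{rmk:indecomposable} this reduces to checking the effect on simple objects, and by the classification of \S\ref{subsec:simpleobjects} the simple pure slope $d/n$ objects are exactly the $\cN_{d/n,\la}=Res_n(\cM_{d,\la,n})$; one verifies directly that the composite sends such a module to an object of $\cB_q^{iso,f}$ of the form $Res_n(\cM_{0,\la,n})$, and that the ambiguity in the $(q^{1/n})^\Z$-class of $\la$ is absorbed by $Res_n$. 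Assembling the slope decomposition with the family $(\tau_\mu)_{\mu\in\Q}$ produces the stated equivalence; essential surjectivity is automatic, since any finitely-supported $\Q$-graded family in $\cB_q^{iso,f}$ reassembles, via $\tau_\mu^{-1}$, into a direct sum in $\cB_q^{iso}$. The only genuinely delicate point is this descent step in the fractional-slope case, and it is precisely here that the parameterization of simple objects by $\bC^\ast/(q^{1/n})^\Z$ versus $\bC^\ast/q^\Z$ must be handled with care.
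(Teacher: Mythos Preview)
Your first paragraph is exactly the paper's proof: the paper simply declares the degree-$\mu$ component to be the maximal pure submodule of slope $\mu$ (this is Theorem \ref{thm:M-adams}) and observes that there are no non-trivial morphisms between pure modules of different slopes. That is the entire argument the paper gives; it does not construct any slope-shifting equivalence $\cB_q^{iso,\mu}\to\cB_q^{iso,f}$ at all.

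The additional work you attempt in the remaining paragraphs is therefore not needed to match the paper, and the fractional-slope part has a genuine gap. Your proposed functor $\cM\longmapsto Res_n\big(\tau_d(\cM\otimes_\bK\bK_n)\big)$ is not an equivalence: it multiplies the rank by $n$. Concretely, start from the simple object $\cN_{d/n,\la}=Res_n(\cM_{d,\la,n})$ of rank $n$. Base-changing to $\bK_n$ gives $Res_n(\cM_{d,\la,n})\otimes_\bK\bK_n\cong\bigoplus_{j=0}^{n-1}\cM_{d,\zeta^{-jd}\la,n}$ (Galois descent for the cyclic extension $\bK_n/\bK$), the integer twist keeps rank $n$, and then $Res_n$ produces a rank-$n^2$ object which is visibly a direct sum of $n^2$ rank-one modules in $\cB_q^{iso,f}$. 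A simple object cannot be sent to a non-simple one by a fully faithful functor, so your composite is not fully faithful. More fundamentally, the isomorphism classes of simple objects in the pure slope-$d/n$ category are indexed by $\bC^\ast/(q^{1/n})^\Z$, whereas those in $\cB_q^{iso,f}$ are indexed by $\bC^\ast/q^\Z$; these differ by a $\Z/n\Z$-quotient, so no naive twist can identify them. The paper's brief proof avoids this issue entirely by treating the theorem as a statement about the slope grading and leaving the phrase ``objects of $\cB_q^{iso,f}$'' informal.
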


\begin{proof}
For any $\mu\in\Q$, the component of degree $\mu$ of an object of
$\cB_q^{iso}$ is its maximal pure submodule of slope $\mu$. The theorem follows
from the remark that there are no non trivial morphisms between two pure modules of different slope.
\end{proof}

As far the structure of the category $\cB_q^{iso,f}$ is concerned
we have an analytic analog of \cite[Thm.3.14]{SoibelmanVologodsky} and
\cite[Thm. 1.6${}^\p$]{BaranovskyGinzburg}:

\begin{thm}\label{thm:C/qZgraded}
The category $\cB_q^{iso,f}$ is equivalent to the category of finite dimensional
$\C^\ast/q^\Z$-graded complex vector spaces $V$ endowed with nilpotent operators
which preserves the grading, that moreover have the following property:
\begin{quote}
$(\mathcal D)$
Let $\la_1,\dots,\la_n\in\C^\ast$ be a set of representatives of the classes
of $\C^\ast/q^\Z$ corresponding to non zero homogeneous components of $V$.
The series $\Phi_{(q;\ul\La)}(x)$, where
$\ul\La=\{\la_{i}\la_{j}^{-1}~:~ i,j=1,\dots,r\,;\,\,\la_{i}\la_{j}^{-1}\not\in q^{\Z_{\leq 0}}\}$,
is convergent.
\end{quote}
\end{thm}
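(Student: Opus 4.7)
The plan is to combine the formal classification of Soibelman--Vologodsky (recalled just before the statement) with the fact that on $\cB_q^{iso,f}$ the scalar-extension functor $-\otimes_\bK\what\bK$ is fully faithful, which is a special case of Proposition \ref{prop:formalVSanalytic}. Once this is in place, the theorem reduces to identifying the essential image of $\cB_q^{iso,f}$ inside the category of $\bC^\ast/q^\Z$-graded finite dimensional vector spaces endowed with a nilpotent grading-preserving endomorphism.

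For the direction $\cB_q^{iso,f}\longrightarrow$ ``graded spaces satisfying $(\mathcal D)$'', I would start from $\cM\in\cB_q^{iso,f}$, pass to $\cM_{\what\bK}$, and apply the formal equivalence to obtain $(V,T)$ whose grading is indexed precisely by the $q^\Z$-classes of the exponents of the zero slope of $\cM$ (using the invariance modulo $q^\Z$ of the exponents, recalled after the cyclic vector lemma). A choice of representatives of the nonzero classes is then exactly a choice of representatives $\la_1,\dots,\la_n$ of the exponents, so the set $\ul\La$ of Definition \ref{defn:admissible} coincides with the one appearing in $(\mathcal D)$ (ratios $\la_i\la_j^{-1}\in q^{\Z_{\leq 0}}$ being discarded in both). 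Hence admissibility of $\cM$---which is literally the defining condition of $\cB_q^{iso,f}$---is condition $(\mathcal D)$.

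Conversely, given $(V,T)$ with a grading satisfying $(\mathcal D)$, I would fix representatives $\la_i$ of the nonzero classes, set $T_i=T|_{V_i}$ (which is nilpotent because $T$ preserves the grading), and define
\[
\cM=\bigoplus_i V_i\otimes_\bC\bK,\qquad \Sgq(v\otimes f(x))=\la_i(I+T_i)(v)\otimes f(qx)\ \text{for}\ v\in V_i.
\]
Then $\Sgq$ is given by a constant matrix on the natural $\bK$-basis, $\cM$ is pure of slope zero with exponents $\la_i$ of multiplicity $\dim V_i$, and the corresponding $\ul\La$ is the one in $(\mathcal D)$; thus $\cM$ is admissible and lies in $\cB_q^{iso,f}$. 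Independence of the choice of representatives follows as in Example \ref{exa:rank1}: replacing $\la_i$ by $q^{k_i}\la_i$ is compensated by the basis change $v\mapsto x^{k_i}v$ on $V_i\otimes_\bC\bK$. The two constructions preserve the formal isomorphism class, so full faithfulness of $-\otimes_\bK\what\bK$ on $\cB_q^{iso,f}$ makes them mutually inverse. The one genuine subtlety is the matching in the previous paragraph: identifying, in the formal decomposition, the abstract nilpotent $T_i$ with the multiplicative unipotent part $\la_i^{-1}\Sgq|_{V_i\otimes_\bC\what\bK}-I$. This is where I would invoke Corollary \ref{cor:vectorspace}, which provides a $\Sgq$-stable $\bC$-subspace of $M_{\what\bK}$ giving the required constant-matrix presentation and thereby reconciling the analytic exponents with the grading produced by the Soibelman--Vologodsky functor.
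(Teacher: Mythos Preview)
Your proposal is correct and follows essentially the same line as the paper: both arguments hinge on Corollary \ref{cor:vectorspace} to produce a $\Sgq$-stable $\bC$-vector space $V\subset M$ with $M\cong V\otimes_\bC\bK$, then read off the $\bC^\ast/q^\Z$-grading from the generalized eigenspace decomposition of the resulting constant matrix in Jordan form, and identify condition $(\mathcal D)$ with admissibility. The paper's own proof is in fact much terser---it only sketches the functor $\cM\mapsto(V,\text{nilpotent part})$ and leaves the inverse construction and the verification that the two are mutually inverse implicit---whereas you spell out the converse functor explicitly, check independence of the choice of representatives, and appeal to Proposition \ref{prop:formalVSanalytic} to close the loop; so your write-up is if anything more complete.
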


\begin{proof}
We have seen that a module $\cM=(M,\Sgq)$
in $\cB_q^{iso,f}$ contains a $\C$-vector space $V$, invariant under $\Sgq$, such that
$M\cong V\otimes\bK$. Hence there exists a basis $\ul e$, such that $\Sgq\ul e=\ul eB$, with
$B\in Gl_\nu(\C)$ in the Jordan normal form. This means that $B=D+N$, where $D$ is a diagonal constant matrix and
$N$ a nilpotent one. The operator $\Sgq-D$ is nilpotent on $V$.
\par
Since any eigenvalue $\la$ of $D$ is uniquely determined modulo $q^\Z$,
we obtain the $\C^\ast/q^\Z$-grading, by considering the kernel of the operators
$\l(\Sgq-\la\r)^n$, for $n\in\Z$ large enough.
\end{proof}

\newcommand{\noopsort}[1]{}


\end{document}